\newtheorem{theor}{Theorem}
\newtheorem*{theor*}{Theorem}
\newtheorem{prop}[theor]{Proposition}
\newtheorem{lemma}[theor]{Lemma}
\newtheorem{cor}[theor]{Corollary}
\newtheorem*{cor*}{Corollary}
\theoremstyle{definition}               
\newtheorem{defin}[theor]{Definition}
\newtheorem{ex}{Example}
\newtheorem{exs}[ex]{Examples}
\newtheorem{rem}{Remark}
\newtheorem{que}[theor]{Question}
\DeclareMathOperator{\Sym}{Sym}
\DeclareMathOperator{\Aut}{Aut}
\DeclareMathOperator{\End}{End}
\DeclareMathOperator{\id}{id}
\DeclareMathOperator{\E}{E}
\DeclareMathOperator{\ind}{i}
\DeclareMathOperator{\per}{p}
\newcommand{\s}[1]{S_{#1}}
\newcommand{\phii}[2]{\phi_{#1,#2}}
\newcommand{\lambdaa}[2]{\lambda_{#1}{#2}}
\newcommand{\rhoo}[2]{\rho_{#1}{#2}}
\newcommand{\alphaa}[3]{\alpha^{#1}_{#2}{#3}}
\newcommand{\betaa}[3]{\beta^{#1}_{#2}{#3}}
\newcommand{\indd}[1]{\ind{#1}}
\newcommand{\perr}[1]{\per{#1}}
\title{Inverse semi-braces and the Yang-Baxter equation}
\author{
  Francesco~Catino
  \\
  Dipartimento di Matematica e Fisica\\ "Ennio De Giorgi"\\
  Università del Salento\\
    Via Provinciale Lecce-Arnesano\\
    73100 Lecce (Italy)\\
  \texttt{francesco.catino@unisalento.it} \\
   \And
Marzia ~Mazzotta \\
Dipartimento di Matematica e Fisica\\ "Ennio De Giorgi"\\
  Università del Salento\\
    Via Provinciale Lecce-Arnesano\\
    73100 Lecce (Italy)\\
  \texttt{marzia.mazzotta@unisalento.it} \\
  \And
 Paola ~Stefanelli \\
  Dipartimento di Matematica e Fisica\\ "Ennio De Giorgi"\\
  Università del Salento\\
    Via Provinciale Lecce-Arnesano\\
    73100 Lecce (Italy)\\
  \texttt{paola.stefanelli@unisalento.it} \\
}
\begin{document}
\maketitle
\begin{abstract}
    The main aim of this paper is to provide set-theoretical
    solutions of the Yang-Baxter equation that are not necessarily bijective, among these new idempotent ones.
    In the specific, we draw on both to the classical theory of inverse semigroups and to that of the most recently studied braces, to give a new research perspective to the open problem of finding solutions. 
    Namely, we have recourse to a new structure, the \emph{inverse semi-brace}, that is a triple $(S,+, \cdot)$ with $(S,+)$ a semigroup and $(S, \cdot)$ an inverse semigroup satisfying the relation $a \left(b + c\right) = a  b + a\left(a^{-1} + c\right)$, for all $a,b,c \in S$, where $a^{-1}$ is the inverse of $a$ in $(S, \cdot)$.
    In particular, we give several constructions of inverse semi-braces which allow for obtaining solutions that are different from those until known.
\end{abstract}

\vspace{2mm}

\keywords{Quantum Yang-Baxter equation \and set-theoretical solution \and inverse semigroups \and brace \and semi-brace \and skew brace  \and asymmetric product\\\\
	\textbf{MSC 2020} \quad 16T25 \quad 81R50\quad 16Y99 \quad 16N20 \quad 20M18}

\doublespacing

\section*{Introduction}
The quantum Yang-Baxter equation first appeared in theoretical physics in a paper by Yang \cite{Ya67} to study
a one-dimensional quantum mechanical many body problem. In an independent way, Baxter \cite{Ba72} solved an eight-vertex model in statistical mechanics by means of this equation. 
In subsequent years, the interest in such equation has vastly increased: it led to the foundations of the theory of quantum groups and it also appeared in
topology and algebra above all for its connections with braid groups and Hopf algebras. In the '90s, Drinfel'd \cite{Dr92} posed the question of finding all the so called set-theoretical solutions of the Yang-Baxter equation.
Specifically, given a set $S$, a map  $r: S \times S \longrightarrow S \times S$ satisfying the relation
\begin{align*}
    \left(r \times \id_S\right) \left(\id_S \times r\right)\left(r \times \id_S\right) 
    = \left(\id_S \times r\right)\left(r \times \id_S\right)\left(\id_S \times r\right)
\end{align*}
is said to be a \emph{set-theoretical solution} of the Yang-Baxter equation, or briefly a \emph{solution}. The map $r$ is usually written as $r(x,y) = (\lambda_x(y),\rho_y(x))$, with $\lambda_x$ and $\rho_y$ maps from $S$ into itself, for all $x,y \in S$. One says that a solution $r$ is \emph{left non-degenerate} if $\lambda_x$ is bijective, for every $x\in S$, \emph{right non-degenerate} if $\rho_y$ is bijective, for every $y\in S$, and \emph{non-degenerate} if $r$ is both left and right non-degenerate.
If $r$ is neither left nor right non-degenerate, then it is called \emph{degenerate}.
Determining all the solutions is still an open problem and it has drawn the attention of several mathematicians.   A large number of works related to this topic has been produced in recent years, actually. The milestones are the papers by Etingof, Schedler and Soloviev \cite{ESS99}, Gateva-Ivanova and Van den Bergh \cite{GaVa98}, Lu, Yan, and Zhu \cite{LuYZ00}, and Soloviev \cite{So00}, where a greater attention has been posed on non-degenerate bijective solutions. Subsequently, involutive solutions have been profusely investigated by many authors, as mostly illustrated into details in the introduction of the paper by Ced\'{o}, Jespers, and Okni\'{n}ski \cite{CeJeOk14}.
The most used approach is based on \emph{left braces}, algebraic structures introduced by Rump \cite{Ru07} that include the Jacobson radical rings. In particular, such structures are involved for obtaining non-degenerate solutions which are also \emph{involutive}, i.e., $r^2=\id$. 
In this way, Rump traced a novel research direction and later fruitful results on these kind of solutions appeared, as one can see in the survey by Ced\'{o} \cite{Ce18}, along the references therein.
To classify involutive solutions, Rump \cite{Ru05} also involved another algebraic structure, that is the \emph{left cycle set}. Interesting contributions in this framework have been obtained, for example, see \cite{Ru16, Ve16, CaCaPi18, CaCaMiPi19x, CaCaPi19, CaCaPi20, CaPiRu20,  Ru20-2}.\\
Bijective solutions, not necessarily involutive, can be produced through skew left braces, algebraic structures introduced by Guarnieri and Vendramin \cite{GuVe17}. Also in this case several works can be found, for instance \cite{SmVe18,CCoSt19,Ze19,CeSmVe19,JeKuVaVe19, Ru19-1,AcLuVe20,AcBo20, CaCaDC20}. Note that skew left  braces always produce solutions that are non-degenerate \cite[Theorem 3.1]{GuVe17} and, in the finite case, they are such that $r^{2n}= \id$, as shown by  Smoktunowicz and Vendramin \cite[Theorem 4.13]{SmVe18}. 
First instances of bijective solutions which are degenerate was found by Yang
\cite[Theorem 4.15]{Ya16}, who studied the interplay between $k$-graphs and the Yang-Baxter equation.

Recently, the focus has been gradually shifted to solutions that are not necessarily bijective. 
Among these solutions, the most studied are \emph{idempotent} ones. 
The investigation of such maps has been mainly started by Lebed \cite{Le17}, who provided a series of examples related to free and free commutative monoids, to factorizable monoids, and to distributive lattices. Later, Matsumoto and Shimizu \cite{MaSh18} also approached to idempotent solutions of dynamical type in a categorical framework. Moreover, Stanovsk{\`y} and Vojt{\v{e}}chovsk{\`y}  \cite{StVo20x} dealt with idempotent left non-degenerate  solutions which are in one-to-one correspondence with twisted Ward left quasigroups and, in particular, they enumerated those that are latin and idempotent. In addition, we mention Cvetko-Vah and Verwimp \cite{Charl19} who gave new examples of degenerate solutions including the idempotent ones, using the algebraic tool of the skew lattice. \\
More in general,  Catino, Colazzo, and Stefanelli \cite{CaCoSt17} showed that the algebraic structure of left semi-brace turns out to be a useful tool for producing left non-degenerate solutions which are not bijective.
Under mild assumptions, Jespers and Van Antwerpen \cite{JeAr19} determined soon after degenerate solutions through a slight generalization of left semi-braces.
Furthermore, construction techniques that allow for providing new solutions that are both not necessarily non-degenerate and bijective starting from given ones have been introduced. Specifically, in \cite{CaCoSt19} it is displayed how to find solutions of finite order by introducing the technique called the matched product of solutions inspired to the matched product of semi-braces \cite{CCoSt20}.
Another way to determine solutions of finite order which are non-bijective, even starting from bijective ones, is the strong semilattice of solutions, contained in \cite{CCoSt20x-2}, basing on the strong semilattice of semigroups. 
Concretely, instances of such solutions are obtained involving the structure of generalized semi-brace.
Catino, Mazzotta, and Stefanelli \cite{CaMaSt20} also provided new degenerate solutions
by means of a technique which involves solutions to the pentagon equation, another basic equation of mathematical
physics. 
Newly, Ced\'o, Jespers, and Verwimp \cite{CeJeVe20x} investigated the structure monoid of arbitrary solutions not necessarily bijective, inspired by the work of Gateva-Ivanova and Majid \cite{GaMa08}.
Finally, Castelli, Catino, and Stefanelli \cite{CaCaSt20x}
developed a theory of extensions for left non-degenerate solutions involving the algebraic structure of $q$-cycle set.

In this paper, we introduce a new algebraic structure, namely the left inverse semi-brace, which turns out to be a useful tool for determining solutions not necessarily bijective.
In particular, this notion involves inverse semigroups, which we recall to be semigroups $S$ such that, for each $x \in S$, there exists a unique $x^{-1} \in S$ satisfying $xx^{-1}x = x$ and $x^{-1}xx^{-1} = x^{-1}$. Inverse semigroup theory was initiated in the 1950s and it has been extensively studied during the years. Most of the known results up to the early 1980s are summarized into the monograph by Petrich \cite{Pe84}. Many works have been appeared on inverse semigroups until now and  whole chapters of classical semigroups books have been dedicated to this topic, like that by Howie \cite{Ho95}. One can draw on to the large and currently investigated theory of inverse semigroups to give a new research perspective to the open problem of finding solutions.\\
A triple $(S, +, \cdot)$ is said to be a \emph{left inverse semi-brace} if $(S,+)$ is a semigroup, $(S, \cdot)$ an inverse semigroup, and it holds 
\begin{align*}
a \left(b + c\right) = a  b + a\left(a^{-1} + c\right), 
\end{align*}
for all $a, b,c \in S$. This structure includes left semi-braces, introduced by Catino, Colazzo, and Stefanelli in \cite{CaCoSt17} and Jespers and Van Antwerpen in \cite{JeAr19}, where the semigroup $\left(S, \cdot\right)$ is a group, and generalized semi-braces treated by the first authors in  \cite{CCoSt20x-2} with $(S, \cdot)$ a Clifford semigroup. 
If $(S,+,\cdot)$ is a left semi-brace with $\left(S,+\right)$ a left cancellative semigroup, then the map associated to $S$, i.e., the map $r_S:S \times S \to S \times S$ given by
\begin{align*}
    r_S\left(a,b\right)
    = \left(a\left(a^{-1} + b\right), \, \left(a^{-1} + b\right)^{-1}b\right),
\end{align*}
for all $a,b\in S$, is always a solution that is also left non-degenerate \cite{CaCoSt17}.
Instead, if $S$ is an arbitrary left semi-brace, the map $r_S$ is not a solution in general and, in this context, a characterization has been given in \cite[Theorem 3]{CCoSt20x-2}. In light of this, we provide sufficient conditions in order that the map $r_S$ associated to an inverse semi-brace $S$ is a solution.\\
Our attention is turned to the study of inverse semi-braces $S$ for which the map $r_S$ is a solution. Specifically, we show how these structures produce a variety of new solutions which are non-bijective and degenerate.
Just to give an idea, fixed a right zero semigroup $\left(S, +\right)$,  if $(S, \cdot)$
is an arbitrary inverse semigroup, then
the map $r_S$ associated to the left inverse semi-brace $(S, +, \cdot)$, that is given by $r_S\left(a,b\right)=\left(ab,b^{-1}b\right)$, is an idempotent and two-sided degenerate solution.
Moreover, there are as many solutions $r_S$ as there are inverse semigroups $(S, \cdot)$. Thus, the number of finite inverse semigroups allows for determining a lower bound for finite idempotent solutions of the form of $r_S$. In this respect, we refer the reader to \cite{MaMa19}, where it has been provided an algorithm for the enumeration of inverse semigroups of finite order. 
We highlight that if the semigroup $(S, \cdot)$ is completely regular, hence $(S,+,\cdot)$ is a generalized semi-brace, such a map $r_S$ is not a solution, in general.

The article is structured as it follows. The first two sections are devoted to introducing left inverse semi-braces and some easy examples. Although the description of the semigroup $(S,+)$ of a left inverse semi-brace $(S,+, \cdot)$ is rather complicated, we show that in the specific case of $S$ a left semi-brace, $(S,+)$ is a rectangular semigroup. Furthermore, we focus on left inverse semi-braces which give solutions. Specifically, we find sufficient conditions to obtain them and show that, if $S$ is a left semi-brace, then the condition provided in the characterization \cite[Theorem 3]{CCoSt20x-2} is satisfied.\\
In the remainder of the work, given two left inverse semi-braces $S$ and $T$, we present some constructions of left inverse semi-braces having the Cartesian Product $S\times T$ as underlying set. Into detail, in the third section we extend the matched product of left semi-braces, contained in \cite{CCoSt20}, to the matched product of left inverse semi-braces. Moreover, we demonstrate that also in this case any matched product of $S$ and $T$ for which the maps $r_S$ and $r_T$ are solutions gives rise to a new solution that is exactly the matched product of $r_S$ and $r_T$. As an application, we show that already the easier case of the semidirect product leads to find various and new examples of solutions, of which we determine their order.\\
A new construction of left inverse semi-braces, including the semidirect product of left inverse semi-braces is introduced in the fourth section and we call it the double semidirect product. Assuming that the maps $r_S$ and $r_T$ associated to $S$ and $T$, respectively, are solutions, we aim to provide a lot of examples of solutions of various kinds associated to the double semi-direct product of $S$ and $T$, not only in the specific class of the degenerate and non-bijective ones. 
In particular, if $S$ and $T$ are arbitrary left semi-braces or skew left braces, we show that easy and usable conditions allow for obtaining a new solution on $S\times T$.\\
In the last section, we extend the asymmetric product of left cancellative left semi-braces given in \cite{CaCoSt17} to left inverse semi-braces. 
Let us note that the asymmetric product of left braces proved useful to obtain rather systematic constructions of regular subgroups of the affine group \cite{CCoSt16}, to investigate simple left braces \cite{BaCeJeOk19,CeJeOk20-2, CeJeOk21}, and it has been related to another construction of finite braces, the
upper shifted semi-direct product of braces, in a recent work \cite{Ru20-3}. To our purpose, we need the notion of $\delta$-cocycle on semigroups, inspired to that used for groups (see the Schreier’s extension in \cite[Theorem 15.1.1]{Ha59}). 
Let us observe that this is not a simple readjustment of the definition introduced in \cite{CaCoSt17}, since it involves entirely the additive structures of the left inverse semi-braces.
We privilege again left inverse semi-braces having solutions and we provide sufficient conditions to obtain solutions in the case of arbitrary left semi-braces.

\bigskip

\section{Left inverse semi-braces: definitions and examples}
In this section, we introduce the definition of left inverse semi-brace and we give some basic examples. 
Further and more challenging examples will be presented later.

\medskip

For the ease of the reader, we initially recall essential notions on inverse semigroups for our treatment. For further details one can see the book by Howie \cite{Ho95} or the monograph by Petrich \cite{Pe84}. A semigroup $S$ is called an \emph{inverse semigroup} if, for each $a\in S$, there exists a unique element $a^{-1}$ of $S$ such that $aa^{-1}a=a$ and $a^{-1}aa^{-1}=a^{-1}$. We call such an element $a^{-1}$ the \emph{inverse} of $a$.\\
Evidently, every group is an inverse semigroup. The behaviour of inverse elements in an inverse semigroup is similar to that in a group, as we recall below.
\begin{lemma}
Given an inverse semigroup $S$, they hold
 $\; (ab)^{-1}=b^{-1}a^{-1}$ \, and \, $(a^{-1})^{-1}=a$,\, for all $a,b \in S$.
\end{lemma}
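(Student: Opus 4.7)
The plan is to handle the two identities separately, with the second being where the real content lies.

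For $(a^{-1})^{-1}=a$, I would simply unfold the uniqueness clause in the definition of an inverse semigroup. By definition, $(a^{-1})^{-1}$ is the unique element $z$ satisfying $a^{-1}za^{-1}=a^{-1}$ and $za^{-1}z=z$. The defining equations for $a^{-1}$ itself, namely $aa^{-1}a=a$ and $a^{-1}aa^{-1}=a^{-1}$, say precisely that the candidate $z=a$ fulfils both requirements. Uniqueness then forces $(a^{-1})^{-1}=a$.

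For $(ab)^{-1}=b^{-1}a^{-1}$, the strategy is again to show that $b^{-1}a^{-1}$ meets the two defining equations for the inverse of $ab$ and then to invoke uniqueness. Concretely, I would compute
\begin{align*}
(ab)(b^{-1}a^{-1})(ab) &= a(bb^{-1})(a^{-1}a)b, \\
(b^{-1}a^{-1})(ab)(b^{-1}a^{-1}) &= b^{-1}(a^{-1}a)(bb^{-1})a^{-1},
\end{align*}
and try to collapse the right-hand sides to $ab$ and $b^{-1}a^{-1}$, respectively.

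The obstacle is that the natural simplification requires interchanging the two idempotents $bb^{-1}$ and $a^{-1}a$, which is not a consequence of associativity alone. I would therefore appeal to the classical structural theorem for inverse semigroups, cited from Howie or Petrich, that the set $E(S)$ of idempotents is a commutative subsemigroup. Once any two idempotents commute, the first computation becomes $a(a^{-1}a)(bb^{-1})b=(aa^{-1}a)(bb^{-1}b)=ab$, and the second becomes $b^{-1}(bb^{-1})(a^{-1}a)a^{-1}=(b^{-1}bb^{-1})(a^{-1}aa^{-1})=b^{-1}a^{-1}$. Uniqueness of the inverse of $ab$ then yields $(ab)^{-1}=b^{-1}a^{-1}$. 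In a full writeup I would either cite the commutativity of idempotents or sketch its standard short argument, which picks an arbitrary inverse $x$ of a product $ef$ of two idempotents and manipulates $fxe$ until the uniqueness clause forces $ef=fe$.
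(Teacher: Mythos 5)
Your proof is correct and is exactly the classical argument from the references (Howie, Petrich) that the paper cites: the paper itself states this lemma without proof as a recalled standard fact, and it records the commutativity of $\E(S)$ immediately afterwards, consistent with your use of it. One reassurance on the point you flagged: invoking commutativity of idempotents is not circular here, since its standard proof uses only $(a^{-1})^{-1}=a$ and the fact that each idempotent is its own inverse, not the anti-homomorphism identity you are proving.
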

\begin{lemma}
If $S$ and $T$ are inverse semigroups and $\Phi$ is a homomorphism from $S$ into $T$, then $\Phi(a^{-1}) = \Phi(a)^{-1}$, \, for any $a\in S$.
\end{lemma}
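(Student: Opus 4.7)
The plan is to exploit the uniqueness clause in the definition of an inverse semigroup: in $T$ there is exactly one element $y$ satisfying $\Phi(a) y \Phi(a) = \Phi(a)$ and $y \Phi(a) y = y$, so it suffices to show that $\Phi(a^{-1})$ is such a $y$.

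First I would apply $\Phi$ to the two defining identities $a a^{-1} a = a$ and $a^{-1} a a^{-1} = a^{-1}$ in $S$. Since $\Phi$ is a homomorphism, these become
\begin{align*}
\Phi(a)\,\Phi(a^{-1})\,\Phi(a) &= \Phi(a), \\
\Phi(a^{-1})\,\Phi(a)\,\Phi(a^{-1}) &= \Phi(a^{-1}).
\end{align*}
These are precisely the two conditions characterising the inverse of $\Phi(a)$ in the inverse semigroup $T$. By the uniqueness of that inverse (invoking the previous lemma, or directly the definition of inverse semigroup recalled just before it), I conclude $\Phi(a^{-1}) = \Phi(a)^{-1}$.

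There is essentially no obstacle here: the argument is a one-line verification once one has committed to using uniqueness rather than attempting any direct manipulation. The only subtlety worth flagging explicitly is that the two equations genuinely need to be checked, not just one, because in a general semigroup an element satisfying only $x y x = x$ need not be unique; both relations together are what make the inverse unique in an inverse semigroup.
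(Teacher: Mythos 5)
Your proof is correct and is exactly the canonical argument: the paper itself states this lemma without proof, recalling it as standard background on inverse semigroups (with references to Howie and Petrich), and the expected justification is precisely what you give. Applying $\Phi$ to $aa^{-1}a=a$ and $a^{-1}aa^{-1}=a^{-1}$ and then invoking uniqueness of inverses in $T$ --- with your correct observation that both identities are needed for uniqueness --- settles it.
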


\medskip

Note that if $a$ is an element of an inverse semigroup $S$, then $aa^{-1}$ and $a^{-1}a$ are idempontent elements of $S$. Moreover, the set $\E(S)$ of the idempotents  is a commutative subsemigroup of $S$ and $e=e^{-1}$, for every $e \in \E(S)$.\\
Finally, we recall that an inverse semigroup in which its idempotent elements are central is called a \emph{Clifford semigroup}.

\bigskip

Now, we give the notion of left inverse semi-brace.
	\begin{defin}
        Let $S$ be a set with two operations $+$ and $\cdot$ such that $\left(S,+\right)$ is a semigroup (not necessarily commutative) and $\left(S,\cdot\right)$ is an inverse semigroup. Then, we say that $\left(S, + , \cdot \right)$ is \emph{a left inverse semi-brace} if
        \begin{align}\label{key1}
            a \left(b+c\right) = ab + a\left(a^{-1} +c\right)
        \end{align}
        holds, for all $a, b, c \in S$. 
        We call $(S,+)$ and $(S,\cdot)$ the \emph{additive semigroup} and the 	\emph{multiplicative semigroup} of $S$, respectively.\\
        A right inverse semi-brace is defined similarly, by replacing condition \eqref{key1} with $\;\left(a+b\right)c =  \left(a+c^{-1}\right)c + bc,\;$
        for all $a,b,c\in S$. \\
        A two-sided inverse semi-brace $(S, +, \cdot)$ is a left inverse semi-brace that is also a right inverse semi-brace with respect to the same operations $+$ and $\cdot$.
    \end{defin} 
\medskip

Clearly,  any left semi-brace is a left inverse semi-brace, since in this case $\left(S,\cdot\right)$ is a group.
Other examples of left inverse semi-braces are generalized left semi-braces $(S,+,\cdot)$, introduced in \cite{CCoSt20x-2}, with $\left(S,\cdot\right)$ a Clifford semigroup. For the ease of the reader, we recall that $(S, +, \cdot)$ is a generalized semi-brace if $(S,+)$ is a semigroup, $\left(S, \cdot\right)$ is a completely regular semigroup and condition \eqref{key1} is satisfied.

Any arbitrary inverse semigroup gives easily rise to left inverse semi-braces, as we will show in the next examples. 

\begin{ex}\label{trivial-semibrace}
If $(S,\cdot)$ is an inverse semigroup and $(S,+)$ is a right zero semigroup or a left zero semigroup, then $S$ is an inverse two-sided semi-brace, which we call \emph{trivial inverse semi-brace}. Clearly, if $|S|>1$, then such trivial semi-braces are not isomorphic.
\end{ex}
\medskip

\begin{ex}\label{es-nuovo}
Let  $(S,\cdot)$ be an inverse semigroup and set $a + b = aa^{-1}b$, for all $a,b\in S$. Then, $S$ is a left inverse semi-brace. Note that if $(S, \cdot)$ is a Clifford semigroup, then $S$ is a two-sided inverse semi-brace.
Similarly, the same is true if we consider the opposite sum, i.e., $a+b=bb^{-1}a$, for all $a,b \in S$.
\end{ex}

\medskip
\begin{ex}\label{ex:prod-idempot} 
Let $(S, \cdot)$ be an inverse semigroup, $e \in \E(S)$ and set $a+b=b e$, for all $a,b\in S$. Then, it is easy to check that $S$ is a left inverse semi-brace.  
Note that, if $e$ is central, then $(S,+.\cdot)$ is also a right inverse semi-brace. 
\end{ex}
\medskip

The following are examples of left inverse semi-braces obtained starting from an arbitrary Clifford semigroup.
\begin{exs}\label{ex:first-examples} 
Let $(S, \cdot)$ be a Clifford semigroup.
If $a + b = a  b$, for all $a,b \in S$, then $S$ is a generalized two-sided semi-brace (see \cite[Example 9]{CCoSt20x-2}). The same is true if we take the opposite sum, i.e., $a + b = b a$, for all $a,b \in S$.
\end{exs}

\medskip

Now, we present a construction of left inverse semi-braces which involves strong semilattice of inverse semigroups, that it is well-known to be inverse (see \cite[Ex.(ii), p.90]{Pe84}).
Note that a similar construction has already been considered in the case of generalized left semi-braces in \cite[Proposition 10]{CCoSt20x-2}. For this reason, we omit the detailed proof.

\begin{prop}\label{prop:Strong-Lattice-Inverse-Semi-Brace}
Let $Y$ be a (lower) semilattice, $\left\{S_{\alpha}\ \left|\ \alpha \in S\right.\right\}$ a family of disjoint left inverse semi-braces. For each pair $\alpha,\beta$ of elements of $Y$ such that $\alpha \geq \beta$, let $\phii{\alpha}{\beta}:\s{\alpha}\to \s{\beta}$ be a homomorphism of left inverse semi-braces such that
\begin{enumerate}
    \item $\phii{\alpha}{\alpha}$ is the identical automorphism of $\s{\alpha}$, for every $\alpha \in Y$,
    \item $\phii{\beta}{\gamma}{}\phii{\alpha}{\beta}{} = \phii{\alpha}{\gamma}{}$, for all $\alpha, \beta, \gamma \in S$ such that $\alpha \geq \beta \geq \gamma$.
\end{enumerate}
Then, $S = \bigcup\left\{\s{\alpha}\ \left|\ \alpha\in Y\right.\right\}$ endowed by the addition and the multiplication defined by
\begin{align*}
    a+b&:= \phii{\alpha}{\alpha\beta}(a)+\phii{\beta}{\alpha\beta}(b),\\
     a\,b&:= \phii{\alpha}{\alpha\beta}(a)\,\phii{\beta}{\alpha\beta}(b),
\end{align*}
for every $a\in \s{\alpha}$ and $b\in \s{\beta}$, is a left inverse semi-brace. Such a left inverse semi-brace is said to be the \emph{strong semilattice $S$ of left inverse semi-brace $S_{\alpha}$} and is denoted by $S=[Y; S_\alpha,\phii{\alpha}{\beta}]$.
\end{prop}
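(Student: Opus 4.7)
\medskip
\noindent\textbf{Proof plan.} The plan is to verify the three requirements in the definition of left inverse semi-brace separately, relying on the well-known fact that strong semilattices of (inverse) semigroups are themselves (inverse) semigroups. First, I would observe that $(S,+)$ is a semigroup by the classical strong-semilattice-of-semigroups argument: associativity reduces, via conditions (1) and (2) on the $\phi_{\alpha,\beta}$, to associativity inside $S_{\alpha\beta\gamma}$. The same template shows $(S,\cdot)$ is a semigroup, and the citation to \cite[Ex.(ii), p.90]{Pe84} (already mentioned in the statement) supplies inverseness of $(S,\cdot)$. A small remark deserves to be made here, since it will be used in the key identity: if $a\in S_{\alpha}$ then the inverse $a^{-1}$ computed in $(S,\cdot)$ coincides with the inverse computed in $(S_{\alpha},\cdot)$. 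This is immediate from the uniqueness of inverses in $(S,\cdot)$ together with the fact that the operations in $S$ restrict to those of $S_{\alpha}$ on $S_{\alpha}$.

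The bulk of the work is verifying the identity \eqref{key1}. Fix $a\in S_{\alpha}$, $b\in S_{\beta}$, $c\in S_{\gamma}$ and set $\delta:=\alpha\beta\gamma$, the meet. The strategy is to compute both sides as elements of $S_{\delta}$ and then invoke the inverse semi-brace axiom inside $S_{\delta}$. For brevity write $\bar{x}:=\phi_{\sigma,\delta}(x)$ when $x\in S_{\sigma}$. On the left-hand side, $b+c\in S_{\beta\gamma}$ and then $a(b+c)\in S_{\delta}$; unfolding the definitions and using that each $\phi_{\sigma,\tau}$ is both an additive and a multiplicative homomorphism, one arrives at
\begin{align*}
a(b+c)=\bar a\,(\bar b+\bar c).
\end{align*}
On the right-hand side, $ab\in S_{\alpha\beta}$ and $a(a^{-1}+c)\in S_{\alpha\gamma}$, so their sum lies in $S_{\delta}$; using again that the $\phi_{\sigma,\tau}$ preserve both operations, together with the fact recalled above that $\phi_{\alpha,\delta}(a^{-1})=\bar a^{-1}$ (because homomorphisms of inverse semigroups preserve inverses, cf.\ the second lemma recalled in this section), one obtains
\begin{align*}
ab+a(a^{-1}+c)=\bar a\,\bar b+\bar a\,(\bar a^{-1}+\bar c).
\end{align*}
Since $S_{\delta}$ is itself a left inverse semi-brace, the two right-hand sides coincide, and \eqref{key1} follows.

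The main obstacle I expect is purely bookkeeping: keeping track of which semilattice index each subexpression lives in, and choosing the meet index $\delta=\alpha\beta\gamma$ large enough so that every partial product and sum descends to $S_{\delta}$ via a \emph{single} application of an appropriate $\phi_{\sigma,\delta}$. Here the transitivity condition (2) is used repeatedly to collapse chains of homomorphisms $\phi_{\tau,\delta}\phi_{\sigma,\tau}=\phi_{\sigma,\delta}$, and condition (1) cleans up the edge cases when some of $\alpha,\beta,\gamma$ already coincide with their meets. Modulo this bookkeeping, no new idea beyond the proof of \cite[Proposition 10]{CCoSt20x-2} (to which the statement explicitly appeals) is required, which is why the detailed calculation may be safely omitted.
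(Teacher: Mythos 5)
Your proposal is correct and follows exactly the argument the paper has in mind: the paper omits the detailed proof precisely because it is this routine verification (deferring to \cite[Proposition 10]{CCoSt20x-2}), namely that $(S,+)$ and $(S,\cdot)$ are strong semilattices of semigroups, that $(S,\cdot)$ is inverse by \cite[Ex.(ii), p.90]{Pe84}, and that \eqref{key1} follows by pushing $a,b,c$ down to $S_{\alpha\beta\gamma}$ via the homomorphisms $\phii{\sigma}{\tau}$ and applying the axiom there. Your explicit remark that the inverse of $a\in S_{\alpha}$ in $(S,\cdot)$ agrees with its inverse in $S_{\alpha}$, so that $\phii{\alpha}{\delta}(a^{-1})=\phii{\alpha}{\delta}(a)^{-1}$, is exactly the point that makes the descent of the right-hand side of \eqref{key1} legitimate.
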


\medskip

The previous examples and Proposition \ref{prop:Strong-Lattice-Inverse-Semi-Brace} suggest that the structure of additive semigroup of a left inverse semi-brace is rather complicated. At present, only partial results are known for left semi-braces. 
Under mild assumptions, for instance in the finite case, it has been proved that the additive semigroup $\left(S, +\right)$ of a left semi-brace $S$ is a completely simple semigroup (see \cite[Theorem 2.8]{JeAr19}). 
In more detail, $\left(S, +\right)$ is a rectangular group, i.e., it is isomorphic to the direct product of a group and a rectangular band, see \cite[Theorem 3]{CCoSt20x-2}. 

\medskip

Now, we show that the additive semigroup of an arbitrary left semi-brace is a rectangular semigroup. In this regard, we recall  that a semigroup $\left(S, +\right)$ is \emph{stationary on the right} if
\begin{align*}
    a + b = a + c \, \Longrightarrow \, x + b = x + c
\end{align*}
holds, for all $a,b,c,x\in S$. By \cite[Ex. 7, p. 98]{ClPr61}, any semigroup which is stationary on the right is rectangular. We recall that a semigroup $\left(S, +\right)$  is \emph{rectangular} if it holds
$$
a+x=b+x=a+y\, \Longrightarrow \, a+x=b+y,
$$
for all $a,b,x,y\in S$ (see \cite[Definition III.5.14]{PeRe99}).
In addition, 
we recall that if $S$ is a left semi-brace, the identity $1$ of the group $(S,\cdot)$ satisfies two special properties, namely, 
$1$ is an idempotent element of $(S,+)$ and it is also a middle unit, i.e., $a + 1 + b = a + b$,
for all $a, b\in S$ (see Lemma 2.4 (1) and Lemma 2.6 (ii) of \cite{JeAr19}).

\begin{prop}\label{prop-middleunit-clifford-preston}
Let $(S,+,\cdot)$ be a left semi-brace. Then, the semigroup $\left(S,+\right)$ is stationary on the right. Consequently, $\left(S,+\right)$ is a rectangular semigroup and they hold:
\begin{enumerate}
    \item  $E\left(S\right)$ is a rectangular band;
    \item  $e\in E\left(S\right)$ if and only if $e$ is a middle unit.
\end{enumerate}

\begin{proof}
Let $a,b,c\in S$ such that $a+b=a+c$. Then, we obtain
\begin{eqnarray}
a+b=a+c &\Longrightarrow & a(a^{-1}+a^{-1}(a+b))=a(a^{-1}+a^{-1}(a+c))\nonumber\\
&\Longrightarrow& 1+b=1+c.\nonumber
\end{eqnarray}
Hence, since $1$ is a middle unit, we have that $x+b=x+1+b=x+1+c=x+c$,
for every $x\in S$.
Thus, the semigroup $(S, +)$ is stationary on the right and so it is rectangular.\\
Now, by \cite[Ex. 7, p. 98]{ClPr61} the set of idempotents $\E(S)$ is a rectangular band and any idempotent is a middle unit. Vice versa,  \cite[Proposition 4]{CCoSt20x-2} completes the proof.
\end{proof}
\end{prop}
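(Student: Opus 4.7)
The plan is to reduce the hypothesis $a+b=a+c$ to the simpler $1+b=1+c$ via a $\lambda$-map argument, then invoke the middle-unit property of $1$ to extend this to arbitrary $x$; rectangularity and the idempotent characterization will then follow from classical semigroup facts together with the cited \cite{CCoSt20x-2}.

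The main technical device is the map $\lambda_a\colon S\to S$ defined by $\lambda_a(y):=a(a^{-1}+y)$. Applying \eqref{key1} with multiplier $a$ to the sum $(a^{-1}+b)+c$ shows immediately that each $\lambda_a$ is an endomorphism of $(S,+)$. More importantly, I would prove the identity $\lambda_{ab}=\lambda_a\lambda_b$: by associativity of $\cdot$, $\lambda_{ab}(c) = a\bigl(b(b^{-1}a^{-1}+c)\bigr)$, and \eqref{key1} applied internally with multiplier $b$ and summands $b^{-1}a^{-1},c$ gives $b(b^{-1}a^{-1}+c) = a^{-1}+\lambda_b(c)$, whence $\lambda_{ab}(c)=a(a^{-1}+\lambda_b(c))=\lambda_a\lambda_b(c)$. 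Setting $b=a^{-1}$ and noting $\lambda_1(c)=1\cdot(1+c)=1+c$, we obtain $\lambda_a\lambda_{a^{-1}}(c)=1+c$, i.e.\ the composite operator is exactly ``prepend~$1$''.

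Given this machinery the bulk of the proposition is immediate. From $a+b=a+c$, left-multiplication by $a^{-1}$ yields $\lambda_{a^{-1}}(b)=\lambda_{a^{-1}}(c)$; applying $\lambda_a$ then gives $1+b=1+c$, and the middle-unit identity upgrades this to $x+b=x+1+b=x+1+c=x+c$ for every $x\in S$, which is precisely the stationary-on-the-right property. The cited \cite[Ex.~7, p.~98]{ClPr61} then supplies rectangularity, the fact that $E(S)$ is a rectangular band, and the forward direction of (2) (every idempotent is a middle unit); the converse in (2) is handled by \cite[Proposition~4]{CCoSt20x-2}.

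The main obstacle I foresee is purely computational: any naive direct application of \eqref{key1} to an expression of the form $a(a^{-1}+y)$ just reproduces it with a leading $1$ (since $aa^{-1}=1$ absorbs on the left), so several obvious manipulations are circular. The trick is to relocate the multiplier using associativity of $\cdot$ before invoking the distributivity-like relation, exactly as in the derivation of $\lambda_{ab}=\lambda_a\lambda_b$ above; once one commits to phrasing things through the $\lambda$-map, the rest is bookkeeping.
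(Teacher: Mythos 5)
Your proposal is correct and follows essentially the same route as the paper: the paper's displayed computation $a\left(a^{-1}+a^{-1}\left(a+b\right)\right)=1+b$ is exactly your identity $\lambda_a\lambda_{a^{-1}}=\lambda_{aa^{-1}}=\lambda_1$ unpacked, after which both arguments use the middle-unit property of $1$ to get stationarity on the right, and cite \cite[Ex.~7, p.~98]{ClPr61} and \cite[Proposition~4]{CCoSt20x-2} for the remaining claims. You merely make explicit the homomorphism $\lambda_{ab}=\lambda_a\lambda_b$ (valid here since $\left(S,\cdot\right)$ is a group, so $bb^{-1}=1$) that the paper uses implicitly in its one-line computation.
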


\medskip

Finally, we note that, under specific assumptions on the additive structure, a left inverse semi-brace $(S,+,\cdot)$ is necessarily a skew left brace, as shown below.
In this respect, we recall that if $(S,+)$ is left cancellative, then $1$ is also a left identity (see \cite[p. 165]{CaCoSt17}).
\begin{rem}
Let $(S,+,\cdot)$ be a left inverse semi-brace. If $(S, +)$ is a left cancellative semigroup with a right identity $1$, then $(S, \cdot)$ is a group. Indeed, if $a \in S$, then
 \begin{align*}
     a1 + 1= 1 = a(1+1)=a1+a(a^{-1}+1)=a1+aa^{-1},
 \end{align*}
 hence $aa^{-1}=1$. 
 Therefore, since every idempotent of $S$ can be expressed in the form $xx^{-1}$, with $x \in S$, we obtain that $1$ is the unique idempotent in $(S, \cdot)$. 
 Therefore, $S$ is a left cancellative left semi-brace. Since, in this case $1$ is a left identity in  $(S, +)$, it follows that $(S, +)$ is a monoid. As observed in \cite[p. 167]{CaCoSt17}, the structure $S$ is necessarily a skew left brace.
\end{rem}

\medskip

\bigskip

\section{Solutions associated to left inverse semi-braces}

In this section, we deal with solutions associated to left inverse semi-braces and we provide sufficient conditions to obtain them. In this way, we give   several solutions which are associated to the examples in the previous section.

\medskip

As is common in the semi-brace theory, given a left inverse semi-brace $(S, +, \cdot)$, let us consider the two maps $\lambda: S\to \End(S,+), \,a\mapsto\lambda_a$ from $S$ into the endomorphism semigroup  of $(S, +)$ and $\rho: S\to S^S, \,b\mapsto\rho_b$ from $S$ into the semigroup of the maps from $S$ into itself defined by
\begin{align*}
    \lambda_a(b) = a(a^{-1} + b) \qquad \qquad \rho_b(a) = (a^{-1} + b)^{-1}b,
\end{align*}
for all $a,b\in S$, respectively. Moreover, if $a,b,c\in S$, then  $\lambda_{ab}\left(c\right) 
= abb^{-1}a^{-1} + \lambda_a\lambda_b\left(c\right)$, where
we observe that $abb^{-1}a^{-1}\in \E(S)$. We call the map $r_S:S\times S\to S\times S$ given by
\begin{align*}
    r_S(a,b) = (\lambda_a(b), \rho_b(a)),
\end{align*}
for all $(a,b)\in S\times S$, the \emph{map associated to the left inverse semi-brace $(S,+,\cdot)$}.
Note that, if $(S,+,\cdot)$ is a left semi-brace with $(S,+)$ a left cancellative semigroup, then the map $r_S$ is a left non-degenerate solution (see \cite[Theorem 9]{CaCoSt17}). Let us recall that not every left semi-brace gives rise to solutions, yet. In this context, in \cite{CaCoSt19} a characterization has been provided.
\begin{theor}[Theorem 3, \cite{CaCoSt19}]\label{th-generalized-sol}
        Let $\left(S,+,\cdot\right)$ be a left semi-brace. The map $r_S$ associated to $S$ is a solution if and only if
        \begin{align}\label{eq:condsolution}
            a + \lambdaa{b}{\left(c\right)}\left(1 + \rhoo{c}{\left(b\right)}\right) = a + b \left(1+c\right)
        \end{align}
        holds, for all $a,b,c\in S$.
\end{theor}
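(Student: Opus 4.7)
The plan is to verify the Yang-Baxter equation for $r_S(a,b)=(\lambda_a(b),\rho_b(a))$ coordinate by coordinate, after first recording three basic identities that hold in any left semi-brace $(S,+,\cdot)$: (i) $\lambda_a(b)\rho_b(a)=ab$, immediate from $a(a^{-1}+b)(a^{-1}+b)^{-1}b=ab$ in the group $(S,\cdot)$; (ii) each $\lambda_a$ is an additive endomorphism of $(S,+)$, which follows by applying the semi-brace axiom to $a\bigl((a^{-1}+x)+y\bigr)$; and (iii) $\lambda\colon(S,\cdot)\to\End(S,+)$ is a homomorphism, i.e.\ $\lambda_{ab}=\lambda_a\lambda_b$, obtained by rewriting $ab(b^{-1}a^{-1}+c)=a\bigl[b(b^{-1}a^{-1}+c)\bigr]$, applying the semi-brace axiom to the inner factor, and using $bb^{-1}=1$ to reduce it to $a(a^{-1}+\lambda_b(c))=\lambda_a\lambda_b(c)$.

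Using these identities, the first coordinate of the YBE matches automatically:
\begin{align*}
\lambda_{\lambda_a(b)}\lambda_{\rho_b(a)}(c)=\lambda_{\lambda_a(b)\rho_b(a)}(c)=\lambda_{ab}(c)=\lambda_a\lambda_b(c).
\end{align*}
For the third coordinate, I would eliminate $\rho$ via $\rho_y(x)=\lambda_x(y)^{-1}xy$ to obtain $\rho_c\rho_b(a)=[\lambda_a(b)\lambda_{\rho_b(a)}(c)]^{-1}abc$ and, analogously, $\rho_{\rho_c(b)}\rho_{\lambda_b(c)}(a)=[\lambda_{ab}(c)\lambda_{\rho_{\lambda_b(c)}(a)}(\rho_c(b))]^{-1}abc$. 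After cancelling $abc$ in the group, the third coordinate collapses to $\lambda_a(b)\lambda_{\rho_b(a)}(c)=\lambda_{ab}(c)\lambda_{\rho_{\lambda_b(c)}(a)}(\rho_c(b))$. The key step is then to unfold $\lambda_{\rho_b(a)}(c)=(a^{-1}+b)^{-1}b\bigl(b^{-1}(a^{-1}+b)+c\bigr)$, apply the semi-brace axiom to the inner factor and cancel $(a^{-1}+b)(a^{-1}+b)^{-1}$; this rewrites the left-hand side as $a\bigl((a^{-1}+b)+\lambda_b(c)\bigr)=\lambda_a(b)+\lambda_a\lambda_b(c)=\lambda_a(b+\lambda_b(c))=\lambda_a(b(1+c))$. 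By the same pattern applied with $b,c$ replaced by $\lambda_b(c),\rho_c(b)$ (using $\lambda_b(c)\rho_c(b)=bc$), the right-hand side becomes $\lambda_a(\lambda_b(c)(1+\rho_c(b)))$. Thus the third coordinate is equivalent to $\lambda_a(b(1+c))=\lambda_a(\lambda_b(c)(1+\rho_c(b)))$; and since $a^{-1}\lambda_a(y)=a^{-1}+y$ in the group, left multiplication by $a^{-1}$ turns this into $a^{-1}+b(1+c)=a^{-1}+\lambda_b(c)(1+\rho_c(b))$ for all $a$, which is exactly \eqref{eq:condsolution}.

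The second coordinate, via the same $\rho$-elimination combined with the already-verified first coordinate, rearranges to $\lambda_{ab}(c)^{-1}\lambda_a(b)\lambda_{\rho_b(a)}(c)=\lambda_{\rho_{\lambda_b(c)}(a)}(\rho_c(b))$; multiplying on the left by $\lambda_{ab}(c)$ yields the same identity as the third coordinate, so no further condition arises. For the converse, assuming \eqref{eq:condsolution} for all $a$, the whole chain of reductions is reversible, recovering the coordinate identities. The main obstacle I anticipate is the pivotal unfolding step showing $\lambda_a(b)\lambda_{\rho_b(a)}(c)=\lambda_a(b(1+c))$: one must notice that after cancellation of $(a^{-1}+b)(a^{-1}+b)^{-1}$ the expression becomes $a\bigl((a^{-1}+b)+\lambda_b(c)\bigr)$, and then reassemble it via additivity of $\lambda_a$ and $\lambda_{ab}=\lambda_a\lambda_b$ into $\lambda_a(b+\lambda_b(c))=\lambda_a(b(1+c))$; once this pattern $\lambda_a(x)\lambda_{\rho_x(a)}(y)=\lambda_a(x(1+y))$ is spotted, the reduction to \eqref{eq:condsolution} is essentially bookkeeping.
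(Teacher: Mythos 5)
Your proof is correct; I checked each step, including the pivotal identity $\lambda_a(x)\lambda_{\rho_x(a)}(y)=\lambda_a\left(x\left(1+y\right)\right)$, which indeed follows from unfolding $\rho_x(a)^{-1}=x^{-1}\left(a^{-1}+x\right)$, applying \eqref{key1}, cancelling $\left(a^{-1}+x\right)\left(a^{-1}+x\right)^{-1}$ in the group, and using additivity of $\lambda_a$ together with $x+\lambda_x(y)=x\left(1+y\right)$. Note, however, that the paper does not prove this statement at all: it quotes it from \cite{CaCoSt19}, and the closest in-paper material is \cref{th:sol-inverse-semi-brace} together with the remark following it, which only recovers the \emph{sufficiency} direction for left semi-braces (conditions $1.$ and $2.$ of that theorem hold automatically, and the remark shows condition $3.$ is equivalent to \eqref{eq:condsolution} via the computations $\rho_b\left(a\right)^{-1}+c=b^{-1}\left(a+b\left(1+c\right)\right)$ and $\left(b^{-1}+c\right)\left(\rho_{\lambda_b\left(c\right)}\left(a\right)^{-1}+\rho_c\left(b\right)\right)=b^{-1}\left(a+\lambda_b\left(c\right)\left(1+\rho_c\left(b\right)\right)\right)$). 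Your route shares the same skeleton — decompose the braid relation into the three component identities and observe the $\lambda$-identity is automatic from $\lambda_{ab}=\lambda_a\lambda_b$ and $\lambda_a(b)\rho_b(a)=ab$ — but differs in mechanism: by eliminating $\rho$ wholesale through $\rho_y(x)=\lambda_x(y)^{-1}xy$ and cancelling $abc$, you collapse both the second and third component identities to the single equation $\lambda_a(b)\lambda_{\rho_b(a)}(c)=\lambda_{ab}(c)\lambda_{\rho_{\lambda_b(c)}(a)}\left(\rho_c(b)\right)$, then convert $\lambda_a(X)=\lambda_a(Y)$ (for all $a$) into $a+X=a+Y$ (for all $a$) using $a^{-1}\lambda_a(y)=a^{-1}+y$ and surjectivity of inversion. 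This buys you two things the in-paper treatment does not provide: a genuinely self-contained proof, and both directions of the equivalence, since every reduction you perform is an equivalence rather than a one-way implication; the paper's more general inverse-semi-brace machinery, by contrast, is built to give sufficient conditions only (there $\lambda_{ab}(c)=abb^{-1}a^{-1}+\lambda_a\lambda_b(c)$ and $\left(a^{-1}+b\right)\left(a^{-1}+b\right)^{-1}$ need not cancel, which is exactly why your group-based shortcuts are available here and not there).
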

\noindent Observe that, as shown in \cite[p. 8]{CCoSt20x-2},  if the map $\rho$ is an anti-homomorphism from the group $(S,\cdot)$ into the monoid of the maps from $S$ into itself, then the map $r_S$ associated to the left semi-brace $(S,+,\cdot)$ satisfies condition \eqref{eq:condsolution}. In particular, if $(S,+)$ is a left cancellative semigroup, the condition \eqref{eq:condsolution} is satisfied, too.

\medskip

In the following, we provide sufficient conditions to obtain solutions through left inverse semi-braces.

\begin{theor}\label{th:sol-inverse-semi-brace}
Let $(S,+,\cdot)$ be a left inverse semi-brace and $r_S$ the map associated to $S$. If the following are satisfied
\begin{enumerate}
    \item $\left(a + b\right)\left(a + b\right)^{-1}\left(a + bc\right) = a + bc$ 
    \vspace{1mm}
     \item $\lambda_a(b)^{-1}+\lambda_{\rho_b(a)}\left(c\right)
     = \lambda_a(b)^{-1}+\lambda_{\left(a^{-1}+\, b\right)^{-1}}\lambda_b\left(c\right)$
   \vspace{1mm}
    \item $\rho_b\left(a\right)^{-1}+c=\left(b^{-1}+c\right)\left(\rho_{\lambda_b\left(c\right)}\left(a\right)^{-1}+\rho_c\left(b\right)\right)$, 
\end{enumerate}
for all $a,b,c \in S$, then the map $r_S$ is a solution.
\begin{proof}
It is a routine computation to verify that the map $r$ associated to $S$ given by $r(a,b)=(\lambda_a(b), \rho_b(a))$ is a solution if and only if they hold
\begin{align*}
    &\lambda_a\lambda_b(c)=\lambda_{\lambda_a\left(b\right)}\lambda_{\rho_b\left(a\right)}\left(c\right)\\
    &\lambda_{\rho_{\lambda_b\left(c\right)}\left(a\right)}\rho_c\left(b\right)=\rho_{\lambda_{\rho_b\left(a\right)}\left(c\right)}\lambda_a\left(b\right)\\
    &\rho_c\rho_b(a)=\rho_{\rho_c\left(b\right)}\rho_{\lambda_b\left(c\right)}\left(a\right),
\end{align*}
for all $a,b,c \in S$. Thus, if $a,b,c \in S$, we have that
\begin{align*}
    \lambda_{\lambda_a\left(b\right)}\lambda_{\rho_b\left(a\right)}\left(c\right)&=\lambda_a\left(b\right)\left(\lambda_a\left(b\right)^{-1}+\lambda_{\rho_b\left(a\right)}\left(c\right)\right)\\
    &=\lambda_a\left(b\right)\left(\lambda_a\left(b\right)^{-1}+\lambda_{\left(a^{-1}+\, b\right)^{-1}}\lambda_b\left(c\right)\right) &\mbox{by 2.}\\
    &=\lambda_a\left(b\right)\left(\left(a^{-1}+b\right)^{-1}a^{-1}+\left(a^{-1}+b\right)^{-1}\left(a^{-1}+b+\lambda_b\left(c\right)\right)\right)\\
    &=\lambda_a\left(b\right)\left(a^{-1}+b\right)^{-1}\left(a^{-1}+\lambda_b\left(c\right)\right) &\mbox{by \eqref{key1}}\\
    &=a\left(a^{-1}+b\right)\left(a^{-1}+b\right)^{-1}\left(a^{-1}+b\left(b^{-1}+c\right)\right)\\
    &= a\left(a^{-1}+b\left(b^{-1}+c\right)\right)&\mbox{by 1.}\\
    &=a\left(a^{-1}+\lambda_b\left(c\right)\right)=\lambda_a\lambda_b\left(c\right).
\end{align*}
Moreover, we obtain
\begin{align*}
    \lambda_{\rho_{\lambda_b\left(c\right)}\left(a\right)}\rho_c\left(b\right)&=\rho_{\lambda_b\left(c\right)}\left(a\right)\left(\rho_{\lambda_b\left(c\right)}\left(a\right)^{-1}+\rho_c\left(b\right)\right)\\
    &=\left(a^{-1}+\lambda_b\left(c\right)\right)^{-1}\lambda_b\left(c\right)
    \left(\rho_{\lambda_b\left(c\right)}\left(a\right)^{-1}+\rho_c\left(b\right)\right) \\
    &=\left(a^{-1}+\lambda_b\left(c\right)\right)^{-1}b\left(b^{-1}+c\right)
    \left(\rho_{\lambda_b\left(c\right)}\left(a\right)^{-1}+\rho_c\left(b\right)\right)\\
    &=\left(a^{-1}+\lambda_b\left(c\right)\right)^{-1}b\left(\rho_b\left(a\right)^{-1}+c\right)&\mbox{by 3.}\\
    &=\left(a^{-1}+\lambda_b\left(c\right)\right)^{-1}\left(a^{-1}+b\right)\left(a^{-1}+b\right)^{-1}b\left(\rho_b\left(a\right)^{-1}+c\right) &\mbox{by 1.}\\
    &=\left(\left(a^{-1}+b\right)^{-1}\left(a^{-1}+\lambda_b\left(c\right)\right)\right)^{-1}\rho_b\left(a\right)\left(\rho_b\left(a\right)^{-1}+c\right)\\
    &=\left(\left(a^{-1}+b\right)^{-1}a^{-1}+\lambda_{\left(a^{-1}+b\right)^{-1}}\lambda_b\left(c\right)\right)^{-1}\lambda_{\rho_b\left(a\right)}\left(c\right) &\mbox{by \eqref{key1}}\\
    &=\left(\lambda_a\left(b\right)^{-1}+\lambda_{\left(a^{-1}+b\right)^{-1}}\lambda_b\left(c\right)\right)^{-1}\lambda_{\rho_b\left(a\right)}\left(c\right)\\
    &= \left(\lambda_a\left(b\right)^{-1}+\lambda_{\rho_b\left(a\right)}\left(c\right)\right)^{-1}\lambda_{\rho_b\left(a\right)}\left(c\right)&\mbox{by 2.}\\
    &= \rho_{\lambda_{\rho_b\left(a\right)}\left(c\right)}\lambda_a\left(b\right).
\end{align*}
Finally, we get
\begin{align*}
    \rho_c\rho_b\left(a\right)&=\left(\rho_b\left(a\right)^{-1}+c\right)^{-1}c\\
    &=\left(\left(b^{-1}+c\right)\left(\rho_{\lambda_b\left(c\right)}\left(a\right)^{-1}+\rho_c\left(b\right)\right)\right)^{-1}c &\mbox{by 3.}\\
    &=\left(\rho_{\lambda_b\left(c\right)}\left(a\right)^{-1}+\rho_c\left(b\right)\right)^{-1}\left(b^{-1}+c\right)^{-1}c\\
    &=\left(\rho_{\lambda_b\left(c\right)}\left(a\right)^{-1}+\rho_c\left(b\right)\right)^{-1}\rho_c\left(b\right)\\
    &=\rho_{\rho_c\left(b\right)}\rho_{\lambda_b\left(c\right)}\left(a\right).
\end{align*}
Therefore, the map $r_S$ is a solution on the left inverse semi-brace $S$.
\end{proof}
\end{theor}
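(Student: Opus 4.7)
The plan is to reduce the braid relation for $r_S$ to the three standard componentwise identities
\begin{align*}
\lambda_a\lambda_b(c) &= \lambda_{\lambda_a(b)}\lambda_{\rho_b(a)}(c),\\
\lambda_{\rho_{\lambda_b(c)}(a)}\rho_c(b) &= \rho_{\lambda_{\rho_b(a)}(c)}\lambda_a(b),\\
\rho_c\rho_b(a) &= \rho_{\rho_c(b)}\rho_{\lambda_b(c)}(a),
\end{align*}
a reduction that is a routine componentwise check independent of the algebraic structure on $S$. I would then verify each identity in turn, using the defining relation \eqref{key1}, the three hypotheses, and the basic inverse-semigroup rules $(xy)^{-1} = y^{-1}x^{-1}$ and $(x^{-1})^{-1}=x$.

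For the first identity, the natural starting point is the right-hand side. Expanding $\lambda_{\lambda_a(b)}\lambda_{\rho_b(a)}(c) = \lambda_a(b)\bigl(\lambda_a(b)^{-1}+\lambda_{\rho_b(a)}(c)\bigr)$ via the definition of $\lambda$, hypothesis 2 replaces the nested $\lambda_{\rho_b(a)}(c)$ by $\lambda_{(a^{-1}+b)^{-1}}\lambda_b(c)$. A controlled application of \eqref{key1} then pulls out the idempotent prefix $(a^{-1}+b)(a^{-1}+b)^{-1}$, and hypothesis 1 absorbs this prefix against $a^{-1}+b(b^{-1}+c) = a^{-1}+\lambda_b(c)$, returning $\lambda_a\lambda_b(c)$. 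The third identity is the quickest: one starts from $\rho_c\rho_b(a)=(\rho_b(a)^{-1}+c)^{-1}c$, uses hypothesis 3 to rewrite $\rho_b(a)^{-1}+c$ as $(b^{-1}+c)\bigl(\rho_{\lambda_b(c)}(a)^{-1}+\rho_c(b)\bigr)$, distributes the inverse via $(xy)^{-1}=y^{-1}x^{-1}$, and recognizes $(b^{-1}+c)^{-1}c = \rho_c(b)$ to land directly at $\rho_{\rho_c(b)}\rho_{\lambda_b(c)}(a)$.

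The main obstacle is the mixed identity, which chains all three hypotheses in a delicate order. I would expand $\lambda_{\rho_{\lambda_b(c)}(a)}\rho_c(b)$ via the definitions, then rewrite $\lambda_b(c) = b(b^{-1}+c)$ so that hypothesis 3 can coalesce the factor $b^{-1}+c$ with the trailing summand into $\rho_b(a)^{-1}+c$. Inserting the idempotent $(a^{-1}+b)(a^{-1}+b)^{-1}$ justified by hypothesis 1 lets one regroup through $(xy)^{-1}=y^{-1}x^{-1}$ and a pass of \eqref{key1} into the factorization $\bigl(\lambda_a(b)^{-1}+\lambda_{(a^{-1}+b)^{-1}}\lambda_b(c)\bigr)^{-1}\lambda_{\rho_b(a)}(c)$, after which hypothesis 2 applied in reverse yields $\rho_{\lambda_{\rho_b(a)}(c)}\lambda_a(b)$. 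The delicate point throughout is the \emph{timing}: each idempotent factor $(x+y)(x+y)^{-1}$ must be introduced and absorbed at precisely the step where hypothesis 1 is applicable, and each side of hypothesis 2 must be invoked in the correct direction so that the rewriting flows from one side of the target identity to the other without introducing spurious terms.
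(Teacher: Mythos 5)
Your proposal is correct and follows essentially the same route as the paper's proof: the same reduction of the braid relation to the three componentwise identities, the same verification order, and the same key manoeuvres in each (hypothesis 2 then \eqref{key1} then hypothesis 1 for the first identity; the chain $\lambda_b(c)=b\left(b^{-1}+c\right)$, hypothesis 3, the inserted idempotent $\left(a^{-1}+b\right)\left(a^{-1}+b\right)^{-1}$ via hypothesis 1, \eqref{key1}, and hypothesis 2 in reverse for the mixed identity; hypothesis 3 plus $(xy)^{-1}=y^{-1}x^{-1}$ for the third). No gaps: the timing concerns you flag are exactly the points the paper's computation handles, and in the same way.
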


\smallskip

\begin{rem}
If $(S,+,\cdot)$ is a left semi-brace, then the condition $1.$ in \cref{th:sol-inverse-semi-brace} trivially holds. Moreover, since $\lambda$ is a homomorphism from $\left(S,\cdot\right)$ into $\End\left(S,+\right)$, we have that
condition $2.$ is clearly satisfied.  In addition, since in this case $\left(S,\cdot\right)$ is a group, we obtain that 
 \begin{align*}
    \rho_b\left(a\right)^{-1}+c
    &=b^{-1}b\left(b^{-1}\left(a^{-1} + b\right) + c\right)\\
    &=  b^{-1}\left(a^{-1} + b + \lambda_b\left(c\right)\right)
    = b^{-1}\left(a + b\left(1 + c\right)\right)
\end{align*}
and
\begin{align*}
    &\left(b^{-1}+c\right)\left(\rho_{\lambda_b\left(c\right)}\left(a\right)^{-1}+\rho_c\left(b\right)\right)\\     
    &= b^{-1}b\left(b^{-1} + c\right)
    \left(\lambda_{b}\left(c\right)^{-1}\left(a^{-1} + \lambda_b\left(c\right)\right) + \rho_c\left(b\right)\right)\\
    &= b^{-1}\lambda_b\left(c\right)
    \left(\lambda_{b}\left(c\right)^{-1}\left(a^{-1} + \lambda_b\left(c\right)\right) + \rho_c\left(b\right)\right)\\
    &= b^{-1}\left(a^{-1} + \lambda_b\left(c\right) + \lambda_{\lambda_b\left(c\right)}\rho_b\left(c\right)\right)\\
    &= b^{-1}\left(a + \lambda_{b}\left(c\right)\left(1 + \rho_c\left(b\right)\right)\right),
 \end{align*} 
hence the condition $3.$ in \cref{th:sol-inverse-semi-brace} holds if and only if \eqref{eq:condsolution} in \cref{th-generalized-sol} is satisfied.
\end{rem}
\medskip

It is a routine computation to verify that all the examples of left inverse semi-braces provided until now satisfy the conditions of \cref{th:sol-inverse-semi-brace}, hence the maps associated to each left inverse semi-brace are solutions. 
Below, we list these solutions and we highlight some properties about their behavior. In general, they are not bijective and lie in the class of degenerate ones.
\medskip

\begin{exs}\label{ex:sol}
Let $(S, \cdot)$ be an arbitrary inverse semigroup. If $S$ is the trivial left inverse semi-brace in \cref{trivial-semibrace} with $(S, +)$ a right zero semigroup, the map $r_S$ associated to $S$ given by
    \begin{align*}
        r_S(a,b)=(ab, b^{-1}b),
    \end{align*}
    for all $a,b \in S$, is an idempotent solution. Similarly, if $S$ is the trivial left inverse semi-brace with $(S, +)$ a left zero semigroup, we get the idempotent solution
    \begin{align*}
        r_S(a,b)=(aa^{-1}, ab),
    \end{align*}
    for all $a,b \in S$.
\end{exs}
\noindent Note that if $|S| > 1$ such solutions are not isomorphic in the sense of \cite{CeJeOk14}. 
In addition, since they are strictly linked to a given inverse semigroup, it is clear that the number of such semigroups determines a lower bound for idempotent solutions. In this regard, we highlight that, recently, in \cite{MaMa19} it has been provided an algorithm for the enumeration of the inverse semigroups of order $n$ (up to isomorphism).  Furthermore, these maps are added to the class of idempotent solutions which are known until now \cite{Le17, MaSh18, Charl19, CCoSt20, CCoSt20x-2}.

\medskip

\begin{rem}
Note that if $(S, \cdot)$ is a Clifford semigroup, the map $r(a,b)=(ab, b^{-1}b)$ associated to the trivial left inverse semi-brace in \cref{ex:sol}, where the first component is exactly the multiplication in the semigroup $S$, is still a solution to the quantum Yang-Baxter equation, i.e., it satisfies the relation $r_{12}r_{13}r_{23}=r_{23}r_{13}r_{12}$, and to the pentagon equation, i.e., it holds $r_{23}r_{13}r_{12}=r_{12}r_{23}$. Thus, it belongs to the class of solutions of pentagonal type (for more details see \cite[Proposition 8]{CaMaSt20}). Similarly, recalling that a map $r$ is a solution if and only if $\tau r$ is a solution of the quantum Yang-Baxter equation, with $\tau$ the flip map on $S \times S$, then the map $\tau r(a,b)=(ab, aa^{-1})$ is a solution of pentagonal type, too.
\end{rem} 

\medskip

\begin{exs}
The map $r_S$ associated to the left inverse semi-braces $S$ in \cref{es-nuovo} with $a +b = aa^{-1}b$, for all $a,b \in S$, given by
    \begin{align*}
        r_S(a,b)=\left(ab,ab\left(ab\right)^{-1}\right),
    \end{align*}
    for all $a,b\in S$, is an idempotent solution. Analogously, the map $t_S$ associated to the left inverse semi-braces $S$ in \cref{es-nuovo}  with  $a +b = aa^{-1}b$, for all $a,b \in S$, defined by
    \begin{align*}
        t_S(a,b)=\left(ab\left(ab\right)^{-1},ab\right),
    \end{align*}
    for all $a,b\in S$, is an idempotent solution.
Let us note that $t_S = \tau r_S$, consequently these two solutions are not isomorphic.
\end{exs}

\medskip
\begin{ex}\label{ex-soluzione-semibrace-be} Let $S$ be the left inverse semi-brace in \cref{ex:prod-idempot} where $a + b = b\cdot e$, with $e \in \E(S) \cap \zeta(S)$, then the map 
    \begin{align*}
        r_S(a,b)=(abe, eb^{-1}b),
    \end{align*}
    for all $a,b\in S$, is an idempotent solution on $S$. Moreover, let us observe that the solution $t_S$ associated to $S$ considered as right inverse semi-brace is given by
\begin{align*}
    t_S\left(a,b\right)=\left(a\left( a+b^{-1}\right)^{-1}, \left(a+b^{-1}\right)b\right)=\left(abe, eb^{-1}b\right)
    = r_S\left(a,b\right),
\end{align*}
for all $a,b\in S$.
\end{ex}

\medskip

\begin{exs}\label{ex-sol-semibrace-clifford}
The map $r_S$ associated to the left inverse semi-brace $S$ in \cref{ex:first-examples} with $a+b=ab$, for all $a,b \in S$, given by
    \begin{align*}
        r_S(a,b) = (aa^{-1}b, b^{-1}ab),
    \end{align*}
    for all $a,b\in S$, is a solution. If in addition the Clifford semigroup $(S,\cdot)$ is commutative, then $r_S$ is a cubic solution, i.e., $r_S^3 = r_S$, see \cite[p. 12]{CCoSt20x-2}.
    Similarly, the solution associated to the left inverse semi-brace $S$ in \cref{ex:first-examples} with $a+b=ba$, for all $a,b \in S$, is given by
    \begin{align*}
        r_S(a,b) = (aba^{-1}, ab^{-1}b),
    \end{align*}
    for all $a,b\in S$, and also in this case $r_S$ is cubic if $(S, \cdot)$ is commutative.
\end{exs}

\bigskip

As mentioned above, if $S=[Y; S_\alpha,\phii{\alpha}{\beta}]$ is a strong semilattice of left inverse semi-braces for which every $S_\alpha$ has  $r_{S_{\alpha}}$ as a solution, then the map 
associated to $S$
is a solution. 
This is a consequence of the technique named strong semilattice of solutions introduced in \cite[Theorem 12]{CCoSt20x-2}.

\medskip
\begin{theor}
	Let $S=[Y; S_\alpha,\phii{\alpha}{\beta}{}]$ be  a strong semilattice of left inverse semi-braces. If $\s{\alpha}$ has $r_{\alpha}$ as a solution, for every $\alpha \in Y$, then the map $r_S$ given by
	\begin{align*}
	r_S\left(x, y\right):= 
	r_{\alpha\beta}\left(\phii{\alpha}{\alpha\beta}\left(x\right),
	\phii{\beta}{\alpha\beta}\left(y\right) \right),
	\end{align*}	
	for all $x\in S_{\alpha}$ and $y\in S_{\beta}$, is a solution on $S$. 
\end{theor}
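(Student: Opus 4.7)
The plan is to deduce this statement directly from the strong semilattice of solutions technique of \cite[Theorem 12]{CCoSt20x-2}. That result glues together a family of solutions $\{r_\alpha\}_{\alpha\in Y}$ on sets $\{S_\alpha\}_{\alpha\in Y}$ indexed by a lower semilattice $Y$, provided one is given connecting maps $\phii{\alpha}{\beta}\colon S_\alpha\to S_\beta$ (for $\alpha\geq \beta$) satisfying the usual semilattice axioms and compatible with the solutions in the sense that $(\phii{\alpha}{\beta}\times\phii{\alpha}{\beta})\,r_\alpha = r_\beta\,(\phii{\alpha}{\beta}\times\phii{\alpha}{\beta})$. Moreover, the resulting solution on $S=\bigcup_{\alpha\in Y}S_\alpha$ is precisely the one given by the formula in the statement, so the task reduces to verifying two things: the semilattice axioms for the connecting maps, and the morphism-of-solutions property for each of them.

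The first ingredient is immediate: conditions (1) and (2) of \cref{prop:Strong-Lattice-Inverse-Semi-Brace} are already the required semilattice axioms, and nothing further has to be checked. For the second ingredient I would exploit the fact that a homomorphism of left inverse semi-braces preserves both operations, and, by the lemma on inverse semigroup homomorphisms recalled in Section~1, also satisfies $\phii{\alpha}{\beta}(a^{-1})=\phii{\alpha}{\beta}(a)^{-1}$. Substituting these preservation properties into the defining formulas
\begin{align*}
\lambda_a(b) = a(a^{-1}+b), \qquad \rho_b(a) = (a^{-1}+b)^{-1}b,
\end{align*}
one obtains $\phii{\alpha}{\beta}(\lambda^{\alpha}_a(b)) = \lambda^{\beta}_{\phii{\alpha}{\beta}(a)}(\phii{\alpha}{\beta}(b))$ and the analogous identity for $\rho$, for all $a,b\in S_\alpha$. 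These are exactly the required intertwinings of $r_\alpha$ with $r_\beta$ under $\phii{\alpha}{\beta}\times\phii{\alpha}{\beta}$.

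With both ingredients in place, \cite[Theorem 12]{CCoSt20x-2} applies and produces a solution on $S$. The only remaining task is to confirm that its gluing formula unwinds to the one displayed here, namely $r_S(x,y)=r_{\alpha\beta}(\phii{\alpha}{\alpha\beta}(x),\phii{\beta}{\alpha\beta}(y))$. This should be a matter of directly unwinding definitions, since both prescriptions transport $x\in S_\alpha$ and $y\in S_\beta$ to the common floor $S_{\alpha\beta}$ via $\phii{\alpha}{\alpha\beta}$ and $\phii{\beta}{\alpha\beta}$ and then apply the local solution $r_{\alpha\beta}$. The main obstacle I anticipate is purely notational: ensuring that the conventions of \cite{CCoSt20x-2} (in particular the orientation of the semilattice ordering and the order of the arguments in the gluing map) line up exactly with those used here. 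Any such discrepancy would be cosmetic and resolvable by a short bookkeeping check, after which the theorem follows at once.
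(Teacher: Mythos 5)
Your proposal is correct and follows exactly the route the paper takes: the paper gives no separate proof, stating only that the theorem is a consequence of the strong semilattice of solutions technique of \cite[Theorem 12]{CCoSt20x-2}, which is precisely your reduction. Your verification that each $\phii{\alpha}{\beta}$ intertwines $r_\alpha$ with $r_\beta$ (using preservation of $+$, $\cdot$, and inverses via the inverse-semigroup homomorphism lemma) is the detail the paper leaves implicit, and it is carried out correctly.
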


\medskip

In the remainder of this work, we provide constructions of left inverse semi-braces which allow to determine solutions starting from known left inverse semi-braces $S$ having $r_S$ as solution. In the specific, we will focus on constructions on the Cartesian product of left inverse semi-braces.
\bigskip

\section{The matched product of left inverse semi-braces}

This section is devoted to extend the construction of the matched product of left semi-braces contained in \cite{CCoSt20} to the class of the left inverse semi-braces. 
Moreover, we show that any matched product of two left inverse semi-braces $S$ and $T$ for which the maps $r_S$ and $r_T$ are solutions gives rise to a new solution that is exactly the matched product of $r_S$ and $r_T$. Finally, we provide various examples of solutions in the easier case, the semidirect product, and we highlight some properties about their order.

\medskip

We begin by reminding the classical \emph{Zappa product} of two semigroups contained in \cite{Ku83}. Let $S$ and $T$ be semigroups, $\sigma:T\to S^{S}$ and $\delta:S\to T^{T}$ maps, and set $^u a:= \sigma\left(u\right)\left(a\right)$ and $u^a:= \delta\left(a\right)\left(u\right)$, for all $a\in S$ and $u\in T$. If the following conditions are satisfied
	\begin{align}\label{S1}
	&^{u}(ab)=\,^ua \, ^{u^a}b	
	&^{uv}a = \, ^u(^va) \tag{Z1}\\
	\label{S2}&( uv)^a = u^{^va} \,\, v^a&u^{ab}=(u^a)^b\tag{Z2}
				\end{align}
for all $a,b \in S$ and $u,v \in T$, then $S \times T$ is a semigroup with respect to the operation defined by
	\begin{equation}\label{prod-semigruppo-match}
	\left(a,u\right)\left(b,v\right)=\left(a\,^ub, u^b\,v\right),
	\end{equation}
	for all $a,b \in S$ and $u,v \in T$. In the following theorem, we give the
necessary conditions for Zappa product of inverse semigroups to be inverse. Hereinafter, for the ease of the reader, we use the letters
$a,b,c$ for $S$ and $u,v,w$ for $T$.

\begin{theor}\emph{(cf. \cite[Theorem 4]{Wa15})}\label{inverse-semigroup-wazzan}
Let $S$ and $T$ be inverse semigroups, $\sigma:T\to S^{S}$ and $\delta:S\to T^{T}$ maps satisfying \eqref{S1} and \eqref{S2}. If $\sigma(T)\subseteq \Aut(S)$ and $\delta(S)\subseteq \Aut(T)$ and
$$
a \, {}^ua = a, \, u^a \, u = u\ \Longrightarrow \ {}^ua=a, \, u^a=u
$$
holds, for all $a\in S$ and $u\in T$, then $S\times T$ endowed with the operation \eqref{prod-semigruppo-match} is an inverse semigroup. 
\end{theor}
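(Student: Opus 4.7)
The plan is to apply the classical characterization of inverse semigroups: a semigroup is inverse if and only if it is regular and its idempotents commute. Since the Zappa product operation \eqref{prod-semigruppo-match} already makes $S\times T$ into a semigroup by hypothesis, only these two properties require verification.

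The key observation unlocking everything is that the second identity in \eqref{S1} says $\sigma:T\to S^{S}$ is a semigroup homomorphism, and the second identity in \eqref{S2} says $\delta:S\to T^{T}$ is a semigroup anti-homomorphism. Under the standing hypotheses $\sigma(T)\subseteq \Aut(S)$ and $\delta(S)\subseteq \Aut(T)$, both maps take values in groups, so every idempotent of $T$ is sent by $\sigma$ to $\id_S$ and every idempotent of $S$ is sent by $\delta$ to $\id_T$; in particular $\sigma(u^{-1})=\sigma(u)^{-1}$ and $\delta(a^{-1})=\delta(a)^{-1}$. Consequently ${}^{u}({}^{u^{-1}}a)=a$ for all $a\in S$ and $u\in T$, and more generally ${}^{f}a=a$ whenever $f\in \E(T)$, and $u^{e}=u$ whenever $e\in \E(S)$.

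For regularity I would propose the candidate pseudo-inverse
\[
(a,u)^{*} := \bigl({}^{u^{-1}}a^{-1},\;(u^{-1})^{a^{-1}}\bigr).
\]
A short computation using the first identity in \eqref{S2} and the observations above gives
\[
(a,u)(a,u)^{*}=\bigl(a a^{-1},\;(u u^{-1})^{a^{-1}}\bigr),
\]
whose second coordinate is an idempotent of $T$ since the automorphism $\delta(a^{-1})$ preserves idempotents. Multiplying on the right by $(a,u)$ and again invoking the trivialisation of the actions on idempotent elements collapses the product back to $(a,u)$; the symmetric identity $(a,u)^{*}(a,u)(a,u)^{*}=(a,u)^{*}$ is checked analogously.

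It remains to show that the idempotents of $S\times T$ commute. If $(a,u)$ is idempotent in $S\times T$, then $a\,{}^{u}a=a$ and $u^{a}u=u$, so the implication in the hypothesis forces ${}^{u}a=a$ and $u^{a}=u$; substituting back yields $a^{2}=a$ and $u^{2}=u$, hence $a\in \E(S)$ and $u\in \E(T)$. Conversely, every $(e,f)\in \E(S)\times \E(T)$ is idempotent in $S\times T$ by the previous paragraph, so $\E(S\times T)=\E(S)\times \E(T)$. For two such pairs the actions are trivial, and therefore
\[
(e_{1},f_{1})(e_{2},f_{2})=(e_{1}e_{2},\,f_{1}f_{2})=(e_{2}e_{1},\,f_{2}f_{1})=(e_{2},f_{2})(e_{1},f_{1}),
\]
using the commutativity of the semilattices $\E(S)$ and $\E(T)$. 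No serious obstacle is expected; the only subtle point is to first extract from the automorphism hypothesis that $\sigma$ and $\delta$ trivialise on $\E(T)$ and $\E(S)$ respectively, after which all remaining verifications reduce to short coordinate-wise calculations.
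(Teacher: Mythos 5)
Your proposal is correct, and it takes essentially the same route as the paper: the paper itself states this theorem by citation to Wazzan without proof, but its proof of the analogous matched-product result (\cref{th:matched-inv-semi}, prepared by \cref{lemma-inversi}) proceeds exactly as you do, namely exhibiting the explicit inverse (your $({}^{u^{-1}}a^{-1},\,(u^{-1})^{a^{-1}})$, which is the Zappa-coordinate form of the paper's formula \eqref{inverso-semigruppo-match}) to get regularity, and then using the implication hypothesis to show $\E(S\times T)=\E(S)\times \E(T)$ so that idempotents commute. All your computations do go through; in particular the symmetric identity $(a,u)^{*}(a,u)(a,u)^{*}=(a,u)^{*}$, which you leave as ``analogous,'' follows by applying the first identity of \eqref{S1} to ${}^{u^{-1}}(a^{-1}a)$ and the first identity of \eqref{S2} to $(u^{-1}u\,u^{-1})^{a^{-1}}=(u^{-1})^{a^{-1}}$ together with the trivial action of idempotents.
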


\medskip

Coherently with the notation of the matched product used in the context of left semi-braces in \cite{CCoSt20x-2}, we introduce suitable maps $\alpha$ and $\beta$ which allow for obtaining a new left inverse semi-brace having the multiplicative semigroup isomorphic to a Zappa product of the starting inverse semigroups.
\begin{defin}\label{def:mps-inv}
    Let $S$ and $T$ be left inverse semi-braces,   $\alpha: T \to \Aut\left(S\right)$ a homomorphism of inverse semigroups from $\left(T,\cdot\right)$ into the automorphism group of $\left(S,+\right)$, and $\beta:S\to \Aut\left(T\right)$ a homomorphism of inverse semigroups from $\left(S,\cdot\right)$ into the automorphism group of $\left(T,+\right)$ such that
    	\begin{align}
    		\label{eq:mps1}
    		\alphaa{}{u}{\left(\alphaa{-1}{u}{\left(a\right)}\, b\right)} = a\, \alphaa{}{\beta^{-1}_{a}{\left(u\right)}}{\left(b\right)}\qquad\quad 
    		\beta_{a}\left(\beta^{-1}_{a}\left(u\right)\, v\right) = u\, \beta_{\alphaa{-1}{u}{\left(a\right)}}\left(v\right)
    		\end{align}
    		\begin{align}\label{eq:mps-idemp}
    		\alphaa{}{u}{\left(\alphaa{-1}{u}{\left(a\right)}\, a\right)} = a, \, \, \
    		\beta_{a}\left(\beta^{-1}_{a}\left(u\right)\, u\right) = u\, \Longrightarrow
    		\alpha_{u}\left(a\right) = a, \ \beta_{a}\left(u\right) = u
    	\end{align}
    	hold, for all $a,b \in S$ and $u,v \in T$. Then, $(S,T,\alpha,\beta)$ is called a \emph{matched product system of left inverse semi-braces}.
    \end{defin}
    
   \medskip
   
   \begin{rem}\label{rem:idemp-alpha-beta}
   Note that if $e\in E(T)$, then $\alpha_{e} = \id_S$.
    In fact, there exists $u\in T$ such that $e = uu^{-1}$ and so
    \begin{align*}
        \alpha_{e}\left(a\right)
        = \alpha_{uu^{-1}}\left(a\right)
        = \alpha_{u}\alpha_{u^{-1}}\left(a\right)
        = \alpha_{u}\alpha^{-1}_{u}\left(a\right)
        = a,
    \end{align*}
    for every $a\in S$. Similarly, if $e\in \E(S)$, then $\beta_{e} = \id_T$.
   \end{rem}
    
 \noindent  Hereinafter, for every fixed pair $(a,u )\in S \times T$, we set $\bar{a}:=\alphaa{-1}{u}{\left(a\right)}$ and $\bar{u}:= \beta^{-1}_{a}\left(u\right)$, whenever it is convenient.
 Now, we introduce a preparatory lemma.

   \begin{lemma}\label{lemma-inversi}
    Let $(S,T,\alpha,\beta)$ be a matched product system of left inverse semi-braces. Then, the following
      \begin{align}
    \left(\alpha^{-1}_{u}\left(a\right)\right)^{-1}
    = \alpha^{-1}_{\beta^{-1}_{a}\left(u\right)}\left(a^{-1}\right)\label{eq:inv-alpha}\\
    \left(\beta^{-1}_{a}\left(u\right)\right)^{-1}
    = \beta^{-1}_{\alpha^{-1}_{u}\left(a\right)}\left(u^{-1}\right)
    \label{eq:inv-beta}
\end{align}
hold, for every $(a,u) \in S \times T$.
     \begin{proof} We prove the first statement, since the second one
     can be shown with the same computations where the roles of $\alpha$ and $\beta$ are reversed.
     If $(a, u)\in S \times T$, we have that
     \begin{align*}
    \alpha^{-1}_{\bar{u}}\left(a^{-1}\right)\, \bar{a}\,\alpha^{-1}_{\bar{u}}\left(a^{-1}\right)
    &= \alpha^{-1}_{\bar{u}}\left(a^{-1}\right)\alpha^{-1}_{u}\left(a\, \alpha_{\bar{u}}{\alpha^{-1}_{\bar{u}}}\left(a^{-1}\right)\right)&\mbox{by \eqref{eq:mps1}}\\
    &= \alpha^{-1}_{\bar{u}}\left(a^{-1}\right)\alpha^{-1}_{u}\left(aa^{-1}\right)\\
    &= \alpha^{-1}_{\bar{u}}\left(a^{-1}\alpha_{\beta^{-1}_{a^{-1}}\left(\bar{u}\right)}\alpha_u^{-1}\left(aa^{-1}\right)\right)&\mbox{by \eqref{eq:mps1}}\\
    &= \alpha^{-1}_{\bar{u}}\left(a^{-1}\alpha_{u}\alpha_{u}^{-1}\left(aa^{-1}\right)\right)\\
    &= \alpha^{-1}_{\bar{u}}(a^{-1}aa^{-1})\\
    &= \alpha^{-1}_{\bar{u}}(a^{-1}).
\end{align*}
Moreover, by \cref{rem:idemp-alpha-beta}, we have that
\begin{align*}
    \bar{a}\,\alpha^{-1}_{\bar{u}}\left(a^{-1}\right)\,\bar{a}
    &= \alpha^{-1}_{u}\left(a\alpha_{\bar{u}}\alpha^{-1}_{\bar{u}}\left(a^{-1}\right)\right)\bar{a}&\mbox{by \eqref{eq:mps1}}\\
    &= \alpha^{-1}_{u}\left(aa^{-1}\right)\bar{a}\\
    &= \alpha^{-1}_{u}\left(aa^{-1}\alpha_{\beta^{-1}_{aa^{-1}}\left(u\right)}\alpha^{-1}_{u}\left(a\right)\right)&\mbox{by \eqref{eq:mps1}}\\
    &= \alpha^{-1}_{u}\left(aa^{-1}\alpha_{u}\alpha^{-1}_u\left(a\right)\right)\\
    &= \alpha^{-1}_{u}(aa^{-1}a)\\
    &= \alpha^{-1}_{u}(a)
    = \bar{a}.
\end{align*}
Therefore, the condition \eqref{eq:inv-alpha} is proved.
     \end{proof}
   \end{lemma}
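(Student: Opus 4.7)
The plan is to exploit the uniqueness of inverses in the inverse semigroup $(S,\cdot)$: to prove the first identity it suffices to verify that $x := \alpha^{-1}_{\bar{u}}(a^{-1})$ satisfies $\bar{a}\, x\, \bar{a} = \bar{a}$ and $x\, \bar{a}\, x = x$, where $\bar{a} = \alpha^{-1}_{u}(a)$ and $\bar{u} = \beta^{-1}_{a}(u)$. Then by uniqueness of the inverse of $\bar{a}$, we conclude $x = \bar{a}^{-1} = \bigl(\alpha^{-1}_{u}(a)\bigr)^{-1}$, which is exactly \eqref{eq:inv-alpha}. The second identity \eqref{eq:inv-beta} will follow by the completely symmetric argument obtained by swapping the roles of $S$ with $T$ and of $\alpha$ with $\beta$, so I would only write out the first case and remark on the symmetry.

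For the computation of $\bar{a}\, x\, \bar{a}$, the main tool is the compatibility relation \eqref{eq:mps1}, read in the equivalent form $\alpha^{-1}_{u}(a)\, b = \alpha^{-1}_{u}\bigl(a\,\alpha_{\beta^{-1}_{a}(u)}(b)\bigr)$. Applying this twice (first to pull $x$ under $\alpha^{-1}_{u}$, producing an $aa^{-1}$ factor, then to pull $\bar{a}$ in as well) turns the product $\bar{a} x \bar{a}$ into $\alpha^{-1}_{u}$ evaluated on something of the form $aa^{-1}\cdot\alpha_{\beta^{-1}_{aa^{-1}}(u)}\alpha^{-1}_{u}(a)$. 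The critical point is that $aa^{-1}\in\E(S)$, so by \cref{rem:idemp-alpha-beta} we have $\beta_{aa^{-1}} = \id_T$, hence $\beta^{-1}_{aa^{-1}}(u)=u$ and the inner composition collapses to $\alpha_{u}\alpha^{-1}_{u}(a)=a$. The expression becomes $\alpha^{-1}_{u}(aa^{-1}a) = \alpha^{-1}_{u}(a) = \bar{a}$, as required.

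For $x\, \bar{a}\, x = x$ the recipe is the same but carried out under $\alpha^{-1}_{\bar{u}}$: push $\bar{a}$ inside via \eqref{eq:mps1}, pick up a factor $a^{-1}a\in\E(S)$, then use \cref{rem:idemp-alpha-beta} to collapse the corresponding $\alpha_{\bar{u}}\alpha^{-1}_{\bar{u}}$, and finally reduce via $a^{-1}aa^{-1} = a^{-1}$ to $\alpha^{-1}_{\bar{u}}(a^{-1}) = x$.

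The main technical obstacle is keeping clear track of which parameter is fed to $\beta$ at each application of \eqref{eq:mps1}; the computation only closes because at the decisive step this parameter turns out to be an idempotent of $S$, which alone unlocks \cref{rem:idemp-alpha-beta} and allows the nested compositions $\alpha_{u}\alpha^{-1}_{u}$ (resp.\ $\alpha_{\bar{u}}\alpha^{-1}_{\bar{u}}$) to cancel. After these cancellations all that remains is the inverse-semigroup identity $xx^{-1}x=x$ applied to $a$ or $a^{-1}$, yielding the result.
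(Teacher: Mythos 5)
Your proposal is correct and follows essentially the same route as the paper: verify that $x=\alpha^{-1}_{\bar{u}}\left(a^{-1}\right)$ satisfies the two defining identities of the inverse of $\bar{a}$ via a double application of \eqref{eq:mps1} together with \cref{rem:idemp-alpha-beta} applied to the idempotents $aa^{-1}$, $a^{-1}a$, then conclude by uniqueness of inverses (which the paper leaves implicit) and obtain \eqref{eq:inv-beta} by the symmetric argument. The only cosmetic difference is your grouping $\left(x\bar{a}\right)x$ instead of the paper's $x\left(\bar{a}x\right)$, which makes $a^{-1}a$ rather than $aa^{-1}$ appear as the intermediate idempotent; this is immaterial.
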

\medskip

\begin{theor}
		\label{th:matched-inv-semi}
		Let $\left(S,T,\alpha,\beta\right)$ be a matched product system of left inverse semi-braces. Then, $S\times T$ with respect to 
		\begin{align*}
		\left(a,u\right)+\left(b,v\right) &:=\left(a+b,u+v\right)\\
		\left(a,u\right)\left(b,v\right) &:= \left(\alphaa{}{u}{\left(\alphaa{-1}{u}{\left(a\right)}\, b\right)},\beta_{a}\left(\beta^{-1}_{a}\left(u\right)\, v\right)\right),
		\end{align*}
			for all $\left(a,u\right), \left(b,v\right) \in S \times T$, is a left inverse semi-brace, called the \emph{matched product} of $S$ and $T$ via $\alpha$ and $\beta$ and denoted by $S\bowtie T$.
		\begin{proof} Firstly, the structure $(S\times T, +)$ is trivially a semigroup. Moreover, $(S \times T, \cdot)$ is a semigroup, indeed,	set $\sigma(u)(a) = \alpha_u(a)$ and $\delta(a)\left(u\right) = \beta^{-1}_{\alpha_{u}\left(a\right)}\left(u\right)$, for all $a \in S$ and $u \in T$, it follows that $\left(S\times T, \cdot\right)$ is a Zappa product \eqref{prod-semigruppo-match} via $\sigma$ and $\delta$
		  and
		  \begin{align*}
		    \varphi: S\times T
		    \to S\times T, \, \left(a,u\right) \mapsto  \left(a, \beta_{a}\left(u\right)\right)
		  \end{align*}
		  is trivially an isomorphism from the Zappa product of $S$ and $T$ into the semigroup $\left(S\times T, \cdot\right)$. To show that $\left(S\times T,\cdot \right)$ is inverse we prove that $\left(S\times T, \cdot\right)$ is regular and that its idempotents commute. If $(a, u) \in S \times T$, by \cref{rem:idemp-alpha-beta}, we obtain
		   \begin{align*}
		   &\left(a,u\right)\left(\alpha^{-1}_{\bar{u}}\left(a^{-1}\right), \beta^{-1}_{\bar{a}}\left(u^{-1}\right)\right)\left(a,u\right)\\
		   &=  \left(a \,\alphaa{}{\bar{u}}{\alpha^{-1}_{\bar{u}}\left(a^{-1}\right) },\, u \, \beta_{\bar{a}}{ \beta^{-1}_{\bar{a}}\left(u^{-1}\right) } \right)\left(a,u\right)
    	&\mbox{by \eqref{eq:mps1}}\\
    	&=\left(a a^{-1},\, u u^{-1} \right)\left(a,u\right)\\
    	&=\left(a a^{-1} \alpha_{\beta^{-1}_{aa^{-1}}\left(u u^{-1}\right)}\left(a \right), \,
    u u^{-1} \beta_{\alpha^{-1}_{uu^{-1}}\left(a a^{-1}\right)}\left(u \right)\right)&\mbox{by \eqref{eq:mps1}}\\
    &=\left(a a^{-1} \alpha_{u u^{-1}}\left(a \right), \, u u^{-1} \beta_{aa^{-1}}\left(u \right)\right)=\left(aa^{-1}a, uu^{-1}u\right)=\left(a,u\right).
		   \end{align*}
	Furthermore, 
		  \begin{align*}
		   &\left(\alpha^{-1}_{\bar{u}}\left(a^{-1}\right), \beta^{-1}_{\bar{a}}\left(u^{-1}\right)\right)\left(a,u\right)\left(\alpha^{-1}_{\bar{u}}\left(a^{-1}\right), \beta^{-1}_{\bar{a}}\left(u^{-1}\right)\right)\\
		   &=
		   \left(\alpha^{-1}_{\bar{u}}\left(a^{-1}\right), \beta^{-1}_{\bar{a}}\left(u^{-1}\right)\right)
		   \left(a\, \alpha_{\bar{u}}\alpha^{-1}_{\bar{u}}\left(a^{-1}\right),
		   u\,
		   \beta_{\bar{a}}\beta^{-1}_{\bar{a}}\left(u^{-1}\right)
		   \right)&\mbox{by \eqref{eq:mps1}}\\
		   &=
		   \left(\alpha^{-1}_{\bar{u}}\left(a^{-1}\right), \beta^{-1}_{\bar{a}}\left(u^{-1}\right)\right)
		   \left(aa^{-1},
		   uu^{-1}\right)\\
		   &= 
		  \left(\alpha^{-1}_{\bar{u}}(a^{-1})
		  \alpha_{\beta^{-1}_{\alpha^{-1}_{\bar{u}}(a^{-1})}
		   \beta^{-1}_{\bar{a}}(u^{-1})}(aa^{-1}),
		   \, \beta^{-1}_{\bar{a}}(u^{-1})
		  \beta_{\alpha^{-1}_{\beta^{-1}_{\bar{a}}(u^{-1})}
		   \alpha^{-1}_{\bar{u}}(a^{-1})}(uu^{-1})
		  \right)&\mbox{by \eqref{eq:mps1}}
		  \end{align*}
		Now, about the first component, we have that
		   \begin{align*}
		  &\alpha^{-1}_{\bar{u}}\left(a^{-1}\right)
		  \alpha_{\beta^{-1}_{\alpha^{-1}_{\bar{u}}\left(a^{-1}\right)}
		   \beta^{-1}_{\bar{a}}\left(u^{-1}\right)}\left(aa^{-1}\right)\\
		  &= \alpha^{-1}_{\bar{u}}\left(a^{-1}\right)
		   \alpha_{\beta^{-1}_{\alpha^{-1}_{\bar{u}}\left(a^{-1}\right)}
		   \left(\beta^{-1}_{a}\left(u\right)\right)^{-1}}\left(aa^{-1}\right) &\mbox{by \eqref{eq:inv-beta}}\\
		   &= \alpha^{-1}_{\bar{u}}\left(a^{-1}\right)
		   \alpha_{\left(\beta^{-1}_{a^{-1}}\beta^{-1}_{a}\left( u\right)\right)^{-1}}\left(aa^{-1}\right)&\mbox{by \eqref{eq:inv-beta}}\\
		   &= \alpha^{-1}_{\bar{u}}\left(a^{-1}\right)
		   \alpha^{-1}_{u}\left(aa^{-1}\right)\\
		   &= \alpha^{-1}_{\bar{u}}
		   \left(a^{-1}\alpha_{\beta^{-1}_{a^{-1}}\beta^{-1}_{a}\left(u\right)}\alpha^{-1}_{u}\left(aa^{-1}\right)\right)
		   &\mbox{by \eqref{eq:mps1}}\\
		   &= \alpha^{-1}_{\bar{u}}\left(a^{-1}\right)
		   \left(a^{-1}\alpha_{u}\alpha^{-1}_{u}\left(aa^{-1}\right)\right)\\
		   &=\alpha^{-1}_{\bar{u}}
		   \left(a^{-1}aa^{-1}\right)\\
		   &=\alpha^{-1}_{\bar{u}}
		   \left(a^{-1}\right).
		   \end{align*}
	By reversing the role of $\alpha$ and $\beta$, we get that the second component is equal to $\beta^{-1}_{\bar{a}}\left(u^{-1}\right)$. Thus, 
	$\left(\alpha^{-1}_{\bar{u}}\left(a^{-1}\right), \beta^{-1}_{\bar{a}}\left(u^{-1}\right)\right)$ in an inverse of $\left(a,u\right)$. Hence, $\left(S\times T, \cdot\right)$ is a regular semigroup.
	Now, by \eqref{eq:mps-idemp}, note that $\left(a,u\right)$ is idempotent with respect to $\cdot$ if and only $a$ and $u$ are idempotent in $\left(S, \cdot\right)$ and $\left(T, \cdot\right)$, respectively. It follows that, if $\left(a,u\right)$  and $\left(b,v\right)$ are idempotents, then
	\begin{align*}
	    \left(a,u\right)\left(b,v\right) = 
	    \left(a\alpha_{\bar{u}}\left(b\right),u\beta_{\bar{a}}\left(v\right)\right)
	    = \left(a\alpha_{u}\left(b\right),u\beta_{a}\left(v\right)\right)
	    = \left(ab,uv\right)
	    = \left(ba, vu\right)
	    = \left(b\alpha_{\bar{v}}\left(a\right), v\beta_{\bar{b}}\left( u\right)\right)
	    = \left(b,v\right)\left(a,u\right).
	\end{align*}
	Therefore, the idempotents commute and so $(S \times T, \cdot)$ is an inverse semigroup. To verify that $S\times T$ is a left inverse semi-brace, i.e., \eqref{key1} holds, the computations are analogue to those in the proof of \cite[Theorem 6]{CaCoSt19}, by
	using the fact that 
		  \begin{align}\label{inverso-semigruppo-match}
		   (a,u)^{-1} = \left(\alpha^{-1}_{\beta^{-1}_{a}\left(u\right)}\left(a^{-1}\right), \beta^{-1}_{\alpha^{-1}_{u}\left(a\right)}\left(u^{-1}\right)\right).
		  \end{align}
	Therefore, the claim follows.
    \end{proof}
\end{theor}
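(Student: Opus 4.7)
The plan is to verify the three defining properties of a left inverse semi-brace for the proposed structure on $S \times T$. The additive part is immediate: since the operation is componentwise, $(S\times T, +)$ inherits associativity from $(S,+)$ and $(T,+)$.

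For the multiplicative structure I would proceed in two stages. First, to obtain associativity, I would introduce the bijection $\varphi : (a,u) \mapsto (a, \beta_a(u))$ on $S \times T$ and show that it conjugates the proposed product to a Zappa product of $(S,\cdot)$ and $(T,\cdot)$ in the sense of \eqref{prod-semigruppo-match}, with actions $\sigma(u)(a) = \alpha_u(a)$ and $\delta(a)(u) = \beta^{-1}_{\alpha_u(a)}(u)$. The Zappa axioms \eqref{S1} and \eqref{S2} then follow from the matched-product relations \eqref{eq:mps1} together with the fact that $\alpha$ and $\beta$ are homomorphisms, which transports associativity back to the original product. Second, to upgrade $(S \times T, \cdot)$ to an inverse semigroup I would exhibit the explicit candidate
\[
(a,u)^{-1} \;=\; \bigl(\alpha^{-1}_{\beta^{-1}_{a}(u)}(a^{-1}),\ \beta^{-1}_{\alpha^{-1}_{u}(a)}(u^{-1})\bigr),
\]
and verify $(a,u)(a,u)^{-1}(a,u) = (a,u)$ together with the symmetric identity. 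The computations rely on \cref{lemma-inversi} to rewrite inverses of the form $(\alpha^{-1}_{\bar u}(a))^{-1}$ and $(\beta^{-1}_{\bar a}(u))^{-1}$, on \eqref{eq:mps1} to fold together nested compositions such as $\alpha_{\bar u}\alpha^{-1}_{\bar u}(a^{-1})$ sitting inside a further $\alpha$, and on \cref{rem:idemp-alpha-beta} to collapse expressions of the form $\alpha_{uu^{-1}}$ and $\beta_{aa^{-1}}$ to the identity map.

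To complete the inverse-semigroup claim the idempotents must commute. Condition \eqref{eq:mps-idemp} forces an element $(a,u)$ to be idempotent if and only if $a \in \E(S)$ and $u \in \E(T)$; then \cref{rem:idemp-alpha-beta} reduces the matched-product multiplication of two such pairs to the componentwise product, and commutativity follows directly from the commutativity of idempotents in $(S, \cdot)$ and $(T, \cdot)$.

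The final step is to verify the distributive identity \eqref{key1} on $S \times T$. I would follow the scheme of \cite[Theorem 6]{CaCoSt19}, expanding both $(a,u)\bigl((b,v) + (c,w)\bigr)$ and $(a,u)(b,v) + (a,u)\bigl((a,u)^{-1} + (c,w)\bigr)$ componentwise and matching the two sides by invoking \eqref{key1} separately in $S$ and $T$, combined with \eqref{eq:mps1} and the inverse formula displayed above. The main obstacle throughout is bookkeeping: nested applications of $\alpha_u$, $\beta_a$, and their inverses proliferate, and the identities \eqref{eq:mps1} together with \cref{lemma-inversi} must be invoked at precisely the right stages to collapse these towers into manageable expressions.
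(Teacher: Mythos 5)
Your proposal is correct and follows essentially the same route as the paper's proof: conjugating the product to a Zappa product via $\varphi$ to get associativity, exhibiting the explicit inverse formula and verifying regularity via \cref{lemma-inversi}, \eqref{eq:mps1}, and \cref{rem:idemp-alpha-beta}, showing idempotents are componentwise by \eqref{eq:mps-idemp} so that they commute, and checking \eqref{key1} along the lines of \cite[Theorem 6]{CaCoSt19}. No meaningful differences to report.
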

\medskip

\begin{rem}
We observe that the map $
		    \varphi: S\times T
		    \to S\times T, \, \left(a,u\right) \mapsto  \left(a, \beta_{a}\left(u\right)\right)$
		  is an isomorphism of inverse semigroups from the Zappa product of $S$ and $T$ into the semigroup $\left(S\times T, \cdot\right)$. Indeed, clearly, $\varphi$ is an isomorphism of semigroups and 
	\begin{align*}
	    \varphi\left(\left(a,u\right)^{-1}\right)&=\varphi\left(\alpha^{-1}_{\beta^{-1}_{a}\left(u\right)}\left(a^{-1}\right), \beta^{-1}_{\alpha^{-1}_{u}\left(a\right)}\left(u^{-1}\right)\right)\\
	    &=\left( \alpha^{-1}_{\beta^{-1}_{a}\left(u\right)}\left(a^{-1}\right), \beta^{-1}_{\alpha^{-1}_{\beta^{-1}_{a}\left(u\right)}\left(a^{-1}\right)}\beta^{-1}_{\alpha^{-1}_u\left(a\right)} \left(u^{-1}\right)  \right)\\
	    &=\left(a, \beta^{-1}_a\left(u\right)\right)^{-1}\\
	    &=\left(\varphi\left(a,u\right)\right)^{-1},
	\end{align*}
    for every $(a,u) \in S \times T$. 
\end{rem}
\medskip

\noindent Similarly to \cite[Remark 5]{CaCoSt19}, by \eqref{eq:mps1}, one can check the following equalities. 
\begin{lemma}
   Let $\left(S,T,\alpha,\beta\right)$ be a matched product system of left inverse semi-braces. Then, they hold
		\begin{align}
		\label{eq:mps1old}
			\lambdaa{a}{\alphaa{}{\beta^{-1}_{a}\left(u\right)}{}} = \alphaa{}{u}{\lambdaa{\alphaa{-1}{u}{\left(a\right)}}{}}\\
		\label{eq:mps2old}
			\lambdaa{u}{\beta_{\alpha^{-1}_{u}\left(a\right)}{}} = \beta_{a}{\lambdaa{\beta^{-1}_{a}\left(u\right)}{}}
		\end{align}
for all $a\in S$ and $u \in T$.
\end{lemma}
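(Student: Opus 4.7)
The plan is to verify both identities pointwise. For \eqref{eq:mps1old} I would fix $a\in S$, $u\in T$ and test both sides on an arbitrary $b\in S$, setting $\bar{u}:=\beta^{-1}_{a}(u)$ as in the excerpt. I would start from the right-hand side, unfolding $\lambda_{\alpha^{-1}_{u}(a)}(b)=\alpha^{-1}_{u}(a)\bigl(\alpha^{-1}_{u}(a)^{-1}+b\bigr)$, so that
\[
\alpha_{u}\lambda_{\alpha^{-1}_{u}(a)}(b)=\alpha_{u}\bigl(\alpha^{-1}_{u}(a)\bigl(\alpha^{-1}_{u}(a)^{-1}+b\bigr)\bigr).
\]
The pivotal step is to apply the first half of \eqref{eq:mps1} with the first multiplicative factor being $\alpha^{-1}_{u}(a)$ and the second factor being the whole sum $\alpha^{-1}_{u}(a)^{-1}+b$. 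This pulls $a$ out of $\alpha_{u}$ and replaces the outer $\alpha_{u}$ by $\alpha_{\bar{u}}$ acting on the second factor, producing $a\,\alpha_{\bar{u}}\!\left(\alpha^{-1}_{u}(a)^{-1}+b\right)$.

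Next I would use that $\alpha_{\bar{u}}\in\Aut(S,+)$, so it distributes over the semigroup sum, giving $a\bigl(\alpha_{\bar{u}}(\alpha^{-1}_{u}(a)^{-1})+\alpha_{\bar{u}}(b)\bigr)$. To identify the first summand with $a^{-1}$ I would invoke \cref{lemma-inversi}, specifically \eqref{eq:inv-alpha}, which rewrites $\alpha^{-1}_{u}(a)^{-1}=\alpha^{-1}_{\bar{u}}(a^{-1})$; then $\alpha_{\bar{u}}\bigl(\alpha^{-1}_{\bar{u}}(a^{-1})\bigr)=a^{-1}$, collapsing the expression to $a\bigl(a^{-1}+\alpha_{\bar{u}}(b)\bigr)=\lambda_{a}\alpha_{\beta^{-1}_{a}(u)}(b)$, i.e.\ the left-hand side.

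For \eqref{eq:mps2old} the calculation is entirely symmetric: one swaps the roles of $(S,\alpha)$ and $(T,\beta)$, tests on an arbitrary $v\in T$, expands $\lambda_{\beta^{-1}_{a}(u)}(v)$, applies the second half of \eqref{eq:mps1}, uses that $\beta_{\bar{a}}\in\Aut(T,+)$ with $\bar{a}:=\alpha^{-1}_{u}(a)$, and closes with \eqref{eq:inv-beta} of \cref{lemma-inversi}. In both cases no real obstacle arises; the only care needed is bookkeeping of the inverses under $\alpha^{-1}_{u}$ and $\beta^{-1}_{a}$, which is precisely what \cref{lemma-inversi} was stated to handle, so the verification is a routine unfold-and-rewrite once the key application of \eqref{eq:mps1} has been identified.
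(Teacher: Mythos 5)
Your proof is correct and matches the paper's intended argument: the paper omits the computation, noting only that the identities follow from \eqref{eq:mps1} as in Remark~5 of \cite{CaCoSt19}, and your pointwise verification --- unfolding $\lambda$, applying the relevant half of \eqref{eq:mps1} to the whole sum, distributing the additive automorphism, and identifying the first summand via \eqref{eq:inv-alpha} (resp.\ \eqref{eq:inv-beta}) of \cref{lemma-inversi} --- is exactly that check. In particular, you correctly recognize that $\alpha_{\bar{u}}$ is only an automorphism of $(S,+)$, so \cref{lemma-inversi} (rather than naive compatibility with multiplicative inverses) is genuinely needed.
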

\medskip

Now, to show that the map $r_{S\bowtie T}$ associated to a matched product $S \bowtie T$ is a solution, let us recall the notion of matched product system of solutions introduced in \cite{CCoSt20}. 
Given a solution $r_S$ on a set $S$ and a solution $r_T$ on a set $T$, if $\alpha: T \to \Sym\left(S\right)$ and $\beta: S \to \Sym\left(T\right)$ are maps, set $\alpha_u:=\alpha\left(u\right)$, for every $u \in T$, and $\beta_a:=\beta\left(a\right)$, for every $a\in S$, then the quadruple $\left(r_S,r_T, \alpha,\beta\right)$ is said to be a \emph{matched product system of solutions} if the following conditions hold
	
	{\scriptsize
		\begin{center}
			\begin{minipage}[b]{.5\textwidth}
				\vspace{-\baselineskip}
				\begin{align}\label{eq:primo}\tag{s1}
					\alpha_u\alpha_v = \alpha_{\lambda_u\left(v\right)}\alpha_{\rho_{v}\left(u\right)}
				\end{align}
			\end{minipage}%
			\hfill\hfill\hfill
			\begin{minipage}[b]{.5\textwidth}
				\vspace{-\baselineskip}
				\begin{align}\label{eq:secondo}\tag{s2}
					\beta_a\beta_b=\beta_{\lambda_a\left(b\right)}\beta_{\rho_b\left(a\right)}
				\end{align}
			\end{minipage}
		\end{center}
		\begin{center}
			\begin{minipage}[b]{.5\textwidth}
				\vspace{-\baselineskip}
				\begin{align}\label{eq:quinto}\tag{s3}
					\rho_{\alpha^{-1}_u\!\left(b\right)}\alpha^{-1}_{\beta_a\left(u\right)}\left(a\right) = \alpha^{-1}_{\beta_{\rho_b\left(a\right)}\beta^{-1}_b\left(u\right)}\rho_b\left(a\right)
				\end{align}
			\end{minipage}%
			\hfill\hfill
			\begin{minipage}[b]{.5\textwidth}
				\vspace{-\baselineskip}
				\begin{align}\label{eq:sesto}\tag{s4}
					\rho_{\beta^{-1}_a\!\left(v\right)}\beta^{-1}_{\alpha_u\left(a\right)}\left(u\right) = \beta^{-1}_{\alpha_{\rho_v\left(u\right)}\alpha^{-1}_v\left(a\right)}\rho_v\left(u\right)
				\end{align}
			\end{minipage}
		\end{center}
		\begin{center}
			\begin{minipage}[b]{.5\textwidth}
				\vspace{-\baselineskip}
				\begin{align}\label{eq:terzo}\tag{s5}
					\lambda_a\alphaa{}{\beta^{-1}_{a}\left(u\right)}{}= \alphaa{}{u}{\lambdaa{\alphaa{-1}{u}{\left(a\right)}}{}}
				\end{align}
			\end{minipage}%
			\hfill\hfill
			\begin{minipage}[b]{.5\textwidth}
				\vspace{-\baselineskip}
				\begin{align}\label{eq:quarto}\tag{s6}
					\lambdaa{u}{\betaa{}{\alphaa{-1}{u}{\left(a\right)}}{}}=\betaa{}{a}{\lambdaa{\beta^{-1}_{a}\left(u\right)}{}}
				\end{align}
			\end{minipage}
		\end{center}
	}
	\noindent for all $a,b \in S$ and $u,v \in T$.
	\medskip
	
	As shown in \cite[Theorem 2]{CCoSt20}, any matched product system of solutions determines a new solution on the set $S\times T$.
	\begin{theor}[Theorem 2, \cite{CCoSt20}]\label{th:matched-system}
	       Let $\left(r_S, \, r_T,\, \alpha,\,\beta\right)$ be a matched product system of solutions. Then, \; the map $r:S{ \times} T\times S{ \times} T \to S{ \times} T\times S{ \times} T$ defined by
		\begin{align}\label{eq:sol-match}
			&r\left(\left(a, u\right), 
			\left(b, v\right)\right) := 
			\left(\left(\alphaa{}{u}{\lambdaa{\bar{a}}{\left(b\right)}},\, \beta_a\lambdaa{\bar{u}}{\left(v\right)}\right),\ \left(\alphaa{-1}{\overline{U}}{\rhoo{\alphaa{}{\bar{u}}{\left(b\right)}}{\left(a\right)}},\,  \beta^{-1}_{\overline{A}}\rhoo{\beta_{\bar{a}}\left(v\right)}{\left(u\right)}\right) \right),
		\end{align}
		
		\noindent where we set
		\begin{center}
		   $\bar{a}:=\alphaa{-1}{u}{\left(a\right)}$, \,\,$\bar{u}:= \beta^{-1}_{a}\left(u\right)$,\,\, $A:=\alphaa{}{u}{\lambdaa{\bar{a}}{\left(b\right)}}$,\, $U:=\beta_a\lambdaa{\bar{u}}{\left(v\right)}$,\,\, $\overline{A}:=\alphaa{-1}{U}{\left(A\right)}$,\,\, $\overline{U}:= \beta^{-1}_{A}\left(U\right)$,
		\end{center}
		for all $\left(a,u\right),\left(b,v\right)\in S\times T$, is a solution. 
		This solution is called the \emph{matched product of the solutions} $r_S$ and $r_T$ (via $\alpha$ and $\beta$) and it is denoted by $r_S\bowtie r_T$.
	\end{theor}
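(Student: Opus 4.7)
The plan is to verify the Yang--Baxter braid relation for $r$ componentwise. Writing $r\bigl((a,u),(b,v)\bigr)=\bigl(\Lambda_{(a,u)}(b,v),\,P_{(b,v)}(a,u)\bigr)$ with
\[
\Lambda_{(a,u)}(b,v)=\bigl(\alpha_u\lambda_{\bar a}(b),\,\beta_a\lambda_{\bar u}(v)\bigr),\qquad P_{(b,v)}(a,u)=\bigl(\alpha^{-1}_{\overline U}\rho_{\alpha_{\bar u}(b)}(a),\,\beta^{-1}_{\overline A}\rho_{\beta_{\bar a}(v)}(u)\bigr),
\]
it is routine (as in the proof of \cref{th:sol-inverse-semi-brace}) that $r$ is a solution if and only if the three identities
\begin{align*}
\Lambda_X\Lambda_Y&=\Lambda_{\Lambda_X(Y)}\Lambda_{P_Y(X)},\\
\Lambda_{P_{\Lambda_Y(Z)}(X)}P_Z(Y)&=P_{\Lambda_{P_Y(X)}(Z)}\Lambda_X(Y),\\
P_Z P_Y&=P_{P_Z(Y)}P_{\Lambda_Y(Z)},
\end{align*}
hold for all $X,Y,Z\in S\times T$. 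The structural observation that organises the calculation is that the first coordinate of $\Lambda_{(a,u)}$ and $P_{(b,v)}$ depends only on $S$-data, twisted by $\alpha$'s indexed by elements of $T$, while the second coordinate depends only on $T$-data, twisted by $\beta$'s indexed by $S$. Consequently each of the three identities splits into an $S$-valued equation and a $T$-valued equation, and the axioms (s1)--(s6) are arranged in symmetric pairs (s1)/(s2), (s3)/(s4), (s5)/(s6) interchanged by $\alpha\leftrightarrow\beta$, so it suffices to carry out the $S$-components; the $T$-components then follow verbatim with the roles swapped.

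For the $S$-part of the $\Lambda$-identity I would first apply (s5) to migrate every $\lambda^{S}$ past the surrounding $\alpha$ into a single composite of the form $\alpha_{?}\lambda^{S}_{?}$ on each side. Then (s1), which is exactly the statement that $\alpha\colon(T,\cdot)\to \Sym(S)$ behaves as an action under the $T$-solution $r_T$, allows me to re-index successive $\alpha$'s as $\alpha_u\alpha_U=\alpha_{\lambda^{T}_u(U')}\alpha_{\rho^{T}_{U'}(u)}$ with $U'$ the appropriate pre-image under the overlining. What remains is exactly the first braid identity for $r_S$, which collapses the two sides. The $S$-part of the $P$-identity is analogous, but uses (s3) to swap $\rho^S$ with $\alpha^{-1}_{\beta_{?}(\cdot)}$ before invoking the $\rho^S$-braid identity for $r_S$. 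The mixed identity is handled by combining (s3), (s5), and the second braid identity for $r_S$, with (s1) used at the end to realign the outer $\alpha$-indices.

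The main obstacle is bookkeeping. Under a single application of $r$ the overlined variables $\overline A, \overline U$ already involve $\alpha_u^{-1}$ and $\beta_a^{-1}$; under the triple composites appearing in the braid relation the indices of the $\alpha$'s and $\beta$'s become long nested expressions, and brute expansion quickly becomes unreadable. To keep this under control I would not expand everything at once: instead, after each application of $\Lambda$ or $P$, I name the resulting pair in $S\times T$ and compute its associated ``bar'' quantities, then apply (s1)--(s6) to verify that the $\alpha$- and $\beta$-indices on the two sides of each identity agree \emph{before} comparing the inner $\lambda^{S,T}$- and $\rho^{S,T}$-expressions. Once the indices are matched, each inner expression equals its partner by the braid identities for $r_S$ or $r_T$. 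The $T$-components of the three identities are then obtained with no further work by the $\alpha\leftrightarrow\beta$ symmetry described above, completing the verification.
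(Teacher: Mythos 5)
Your plan is correct and coincides with the standard argument: this paper states the theorem without proof, importing it from \cite{CCoSt20}, and the proof there is precisely the direct verification of the three equivalent identities for $\Lambda$ and $P$ via the axioms \eqref{eq:primo}--\eqref{eq:quarto}, the same reduction of the braid relation used at the start of the proof of \cref{th:sol-inverse-semi-brace}. Your $\alpha\leftrightarrow\beta$ symmetry shortcut is also sound, since the axioms pair up as \eqref{eq:primo}/\eqref{eq:secondo}, \eqref{eq:quinto}/\eqref{eq:sesto}, \eqref{eq:terzo}/\eqref{eq:quarto} under swapping $(S,\alpha,r_S)$ with $(T,\beta,r_T)$ and the formula \eqref{eq:sol-match} is invariant under that swap; this is the very device the paper itself uses in the proof of \cref{th:sol-match}, where only \eqref{eq:primo}, \eqref{eq:quinto}, and \eqref{eq:terzo} are checked and the remaining conditions are noted to follow similarly.
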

	
    If $\left(r_{S}, r_{T}, \alpha, \beta\right)$ is a matched product system of solutions, we denote $\alphaa{-1}{u}{\left(a\right)}$ with $\bar{a}$ and $\beta^{-1}_{a}\left(u\right)$ with $\bar{u}$, when the pair $\left(a,u\right) \in S \times T$ is
	clear from the context.
	
    \medskip

	\begin{theor}\label{th:sol-match}
	Let $\left(S,T,\alpha,\beta\right)$ be a matched product system of left inverse semi-braces, $r_S$ and $r_T$ solutions on $S$ and $T$, respectively. Then, $r_{S \bowtie T}$ is a solution on the matched product $S \bowtie T$ and $r_{S \bowtie T} = r_{S}\bowtie r_{T}$.
	        	\begin{proof}
	        	Initially, we compute the components of $r_{S\bowtie T}$ and we show that they are exactly those of the matched solution of $r_S$ and $r_T$ as in \eqref{eq:sol-match}.
Specifically, if $(a, u), (b, v) \in S \times T$, we prove that
\begin{align*}
    \lambda_{\left(a,u\right)}\left(b,v\right)
    = \left(\alphaa{}{u}{\lambdaa{\bar{a}}{\left(b\right)}},\, \beta_a\lambdaa{\bar{u}}{\left(v\right)}\right)
    \qquad 
    \rho_{\left(b,v\right)}\left(a,u\right)
    = \left(\alphaa{-1}{\overline{U}}{\rhoo{\alphaa{}{\bar{u}}{\left(b\right)}}{\left(a\right)}},\,  \beta^{-1}_{\overline{A}}\rhoo{\beta_{\bar{a}}\left(v\right)}{\left(u\right)}\right).
\end{align*}
 Firstly,
\begin{align*}
    \lambda_{\left(a,u\right)}\left(b,v\right)&=\left(a,u\right)\left(\left(\alpha^{-1}_{\bar{u}}\left(a^{-1}\right),\, \beta^{-1}_{\bar{a}}\left(u^{-1}\right)\right)+\left(b,v\right)\right) &\mbox{by \eqref{inverso-semigruppo-match}}\\
    &=\left(a,u\right)\left(\left(\alpha^{-1}_{\bar{u}}\left(a^{-1}\right)+b,\, \beta^{-1}_{\bar{a}}\left(u^{-1}\right)+v\right)\right)\\
    &=\left(a\,\alpha_{\bar{u}}\left(\alpha^{-1}_{\bar{u}}\left(a^{-1}\right)+b\right), \, u\,\beta_{\bar{a}}\left(\beta^{-1}_{\bar{a}}\left(u^{-1}\right)+v\right)\right) \\
    &=\left(a\left(a^{-1}+\alpha_{\bar{u}}\left(b\right)\right), \,u\left(u^{-1}+\beta_{\bar{a}}\left(v\right)\right) \right)\\
    &=\left(\lambda_a\alpha_{\bar{u}}\left(b\right), \, \lambda_u\beta_{\bar{a}}\left(v\right)\right)\\
    &=\left(\alphaa{}{u}{\lambdaa{\bar{a}}{\left(b\right)}},\, \beta_a\lambdaa{\bar{u}}{\left(v\right)}\right) &\mbox{by \eqref{eq:mps1old} - \eqref{eq:mps2old}}
\end{align*}
Moreover, set
\begin{align*}
 x:= \alpha^{-1}_{\bar{u}}\left(a^{-1} + \alpha_{\bar{u}}\left(b\right)\right), \quad y:= \beta^{-1}_{\bar{a}}\left(u^{-1} + \beta_{\bar{a}}\left(v\right)\right),   \quad \mathcal{A}:= \alpha^{-1}_{y}
    \left(x\right), \quad
    \mathcal{U}:= \beta^{-1}_{x}
    \left(y\right),
\end{align*}
we have that
\begin{align*}
\rho_{\left(b,v\right)}\left(a,u\right)&=\left(\left(\alpha^{-1}_{\bar{u}}\left(a^{-1}\right), \beta^{-1}_{\bar{a}}\left(u^{-1}\right)\right)+\left(b,v\right)\right)^{-1}\left(b,v\right) &\mbox{by \eqref{inverso-semigruppo-match}}\\
    &=\left(\alpha^{-1}_{\bar{u}}\left(a^{-1}\right)+b, \beta^{-1}_{\bar{a}}\left(u^{-1}\right)+v\right)^{-1}\left(b,v\right)\\
    &= (x,y)^{-1}(b,v)\\
    &=\left(\alpha^{-1}_{\beta^{-1}_{x}\left(y\right)}\left(x^{-1}\right), \beta^{-1}_{\alpha^{-1}_{y}\left(x\right)}\left(y^{-1}\right)\right)\left(b,v\right) &\mbox{by \eqref{inverso-semigruppo-match}}\\
    &=\left(\mathcal{A}^{-1}, \mathcal{U}^{-1}\right)\left(b,v\right) &\mbox{by \eqref{eq:inv-alpha}-\eqref{eq:inv-beta}}\\
    &=\left(\mathcal{A}^{-1}\alpha_{\beta^{-1}_{\mathcal{A}^{-1}}\left(\mathcal{U}^{-1}\right)}\left(b\right),\, \mathcal{U}^{-1}\beta_{\alpha^{-1}_{\mathcal{U}^{-1}}\left(\mathcal{A}^{-1}\right)}\left(v\right)
    \right).
\end{align*}
Now, we focus on the first component and we show that it is equal to $\alphaa{-1}{\overline{U}}{\rhoo{\alphaa{}{\bar{u}}{\left(b\right)}}{\left(a\right)}}$. By \eqref{eq:inv-beta}, we get
\begin{align} \label{modo-A-diverso}    \mathcal{A} 
    &= \alpha^{-1}_{\bar{u}^{-1} + v}\left(x\right)
    = \alpha^{-1}_{\bar{u}\left(\bar{u}^{-1} + v\right)}\left(a^{-1} + \alpha_{\bar{u}}\left(b\right)\right)
    = \alpha^{-1}_{\lambda_{\bar{u}}\left(v\right)}\left(a^{-1} + \alpha_{\bar{u}}\left(b\right)\right)
    \end{align}
    and, similarly, by \eqref{eq:inv-alpha},
    \begin{align}
    \label{modo-U-diverso}\mathcal{U} 
   = \beta^{-1}_{\lambda_{\bar{a}}\left(b\right)}\left(u^{-1} + \beta_{\bar{a}}\left(v\right)\right).
\end{align}
Hence, 
\begin{align*} 
    \mathcal{A}^{-1}
    &= \alpha^{-1}_{\beta^{-1}_{a^{-1} + \alpha_{\bar{u}}\left(b\right)}\lambda_{\bar{u}}\left(v\right)}\left(\left(a^{-1} + \alpha_{\bar{u}}\left(b\right)\right)^{-1}\right)
    &\mbox{by \eqref{eq:inv-alpha}}\\
    &=\alpha^{-1}_{\beta^{-1}_{a^{-1}a\left(a^{-1} + \alpha_{\bar{u}}\left(b\right)\right)}\lambda_{\bar{u}}\left(v\right)}\left(\left(a^{-1} + \alpha_{\bar{u}}\left(b\right)\right)^{-1}\right)&\mbox{by \cref{rem:idemp-alpha-beta}}\\
    &=\alpha^{-1}_{\beta^{-1}_{\lambda_a\alpha_{\bar{u}}\left(b\right)}\beta_a\lambda_{\bar{u}}\left(v\right)}\left(\left(a^{-1} + \alpha_{\bar{u}}\left(b\right)\right)^{-1}\right)\\
    &=\alpha^{-1}_{\beta^{-1}_{\alpha_u\lambda_{\bar{a}}\left(b\right)}\beta_a\lambda_{\bar{u}}\left(v\right)}\left(\left(a^{-1} + \alpha_{\bar{u}}\left(b\right)\right)^{-1}\right)\\
    &=\alpha^{-1}_{\overline{U}}\left(\left(a^{-1} + \alpha_{\bar{u}}\left(b\right)\right)^{-1}\right).
\end{align*}
By \eqref{eq:mps1}, it follows that
\begin{align*}
    \mathcal{A}^{-1}\alpha_{\beta^{-1}_{\mathcal{A}^{-1}}\left(\mathcal{U}^{-1}\right)}\left(b\right)
    = \alpha^{-1}_{\bar{U}}\left(\left(a^{-1} + \alpha_{\bar{u}}\left(b\right)\right)^{-1} 
    \alpha_{\beta^{-1}_{\left(a^{-1} + \alpha_{\bar{u}}\left(b\right)\right)^{-1}}
    \left(\overline{U}\right)}
    \alpha_{\beta^{-1}_{\mathcal{A}^{-1}}\left(\mathcal{U}^{-1}\right)}\left(b\right)\right)
\end{align*}
and so, set $\mathcal{B}:= \alpha_{\beta^{-1}_{\left(a^{-1} + \alpha_{\bar{u}}\left(b\right)\right)^{-1}}
    \left(\overline{U}\right)}
    \alpha_{\beta^{-1}_{\mathcal{A}^{-1}}\left(\mathcal{U}^{-1}\right)}\left(b\right)$ the thesis reduces to show that
$$
\mathcal{B} = \alpha_{\bar{u}}\left(b \right).
$$
We obtain that the subscript of the first $\alpha$ in $\mathcal{B}$ becomes
\begin{align*}
    \beta^{-1}_{\left(a^{-1} + \alpha_{\bar{u}}\left(b\right)\right)^{-1}}
    \left(\overline{U}\right)
    &= \beta_{a^{-1} + \alpha_{\bar{u}}\left(b\right)}
    \left(\overline{U}\right)\\
    &= \beta_{a^{-1}a\left(a^{-1} + \alpha_{\bar{u}}\left(b\right)\right)}\left(\overline{U}\right)&\mbox{by \cref{rem:idemp-alpha-beta}}\\
    &= \beta^{-1}_{a}\beta_{\lambda_a\alpha_{\bar{u}\left(b\right)}}
    \beta^{-1}_{\lambda_a\alpha_{\bar{u}\left(b\right)}}\lambda_{u}\beta_{\bar{a}}\left(v\right)\\
    &= \beta^{-1}_{a}\lambda_{u}\beta_{\bar{a}}\left(v\right)&\mbox{by \eqref{eq:mps2old}}\\
    &= \lambda_{\bar{u}}\left(v\right)
\end{align*}
and so
\begin{align*}
    \mathcal{B}
    = \alpha_{\lambda_{\bar{u}}\left(v\right)}
    \alpha_{\beta^{-1}_{\mathcal{A}^{-1}}\left(\mathcal{U}^{-1}\right)}\left(b\right)
    = \alpha_{\bar{u}}\,\alpha_{\bar{u}^{-1} + v}
    \,\alpha^{-1}_{\left(\beta^{-1}_{\mathcal{A}^{-1}}\left(\mathcal{U}^{-1}\right)\right)^{-1}}\left(b\right).
\end{align*}
Thus, we show that $\left(\beta^{-1}_{\mathcal{A}^{-1}}\left(\mathcal{U}^{-1}\right)\right)^{-1} = \bar{u}^{-1}+v$. 
To this aim, note that, by \cref{rem:idemp-alpha-beta}, it holds
\begin{align}\label{eq:ultima1}
    \beta^{-1}_{\lambda_{\bar{a}}\left(b\right)}\left(u^{-1}+\beta_{\bar{a}}\left(v\right)\right)
   = \beta^{-1}_{\lambda_{\bar{a}}\left(b\right)}\beta_{\bar{a}}\left(\beta^{-1}_{\bar{a}}\left(u^{-1}\right) + v\right)
   = \beta^{-1}_{\bar{a}^{-1} + b}\left(\bar{u}^{-1} + v\right)
\end{align}
and, analogously,
\begin{align}\label{eq:ultima2}
    \alpha^{-1}_{\lambda_{\bar{u}}\left(v\right)}\left(a^{-1} + \alpha_{\bar{u}}\left(b\right)\right)
  = \alpha^{-1}_{\bar{u}^{-1} + v}\left(\bar{a}^{-1} + b\right).
\end{align}
We compute 
\begin{align*}
 &\left(\beta^{-1}_{\mathcal{A}^{-1}}\left(\mathcal{U}^{-1}\right)\right)^{-1}\\
 &=\beta^{-1}_{\left(\alpha^{-1}_{\lambda_{\bar{u}}\left(v\right)}\left(a^{-1}+\alpha_{\bar{u}}\left(b\right)\right)\right)^{-1}}  \left(\beta^{-1}_{\lambda_{\bar{a}}\left(b\right)}\left(u^{-1}+\beta_{\bar{a}}\left(v\right)\right)\right)^{-1} &\mbox{by \eqref{modo-A-diverso}-\eqref{modo-U-diverso}}\\
 &=\beta^{-1}_{\alpha^{-1}_{\left(\beta^{-1}_{\lambda_{\bar{a}}\left(b\right)}\left(u^{-1}+\beta_{\bar{a}}\left(v\right)\right)\right)^{-1}}\left(\alpha^{-1}_{\lambda_{\bar{u}}\left(v\right)}\left(a^{-1}+\alpha_{\bar{u}}\left(b\right)\right)\right)^{-1}}\beta^{-1}_{\lambda_{\bar{a}}\left(b\right)}\left(u^{-1}+\beta_{\bar{a}}\left(v\right)\right) &\mbox{by \eqref{inverso-semigruppo-match}}\\
 &=\beta^{-1}_{\alpha^{-1}_{\left(\beta^{-1}_{\bar{a}^{-1} + b}\left(\bar{u}^{-1} + v\right)\right)^{-1}}\left(\alpha^{-1}_{\bar{u}^{-1} + v}\left(\bar{a}^{-1} + b\right)\right)^{-1}}\beta^{-1}_{\bar{a}^{-1} + b}\left(\bar{u}^{-1} + v\right) &\mbox{\eqref{eq:ultima1}-\eqref{eq:ultima2}}\\
 &=\beta^{-1}_{\left(\bar{a}^{-1} + b\right)^{-1}}\beta^{-1}_{\bar{a}^{-1} + b}\left(\bar{u}^{-1} + v\right)&\mbox{by \eqref{inverso-semigruppo-match}}\\
 &=\bar{u}^{-1} + v,
\end{align*} 
which proves what stated before. By reversing the role of $\alpha$ and $\beta$, we obtain with similar computations that the second component of $\rho_{(b,v)}(a,u)$ is equal to $\beta^{-1}_{\overline{A}}\rhoo{\beta_{\bar{a}}\left(v\right)}{\left(u\right)}$.\\
Now, to get the claim, it remains to be proven that $\left(r_S,r_T, \alpha,\beta\right)$ is matched product system of solutions, by verifying only \eqref{eq:primo}, \eqref{eq:quinto}, and \eqref{eq:terzo}, since the the other ones can be obtained similarly.
Initially, note that, by \cref{rem:idemp-alpha-beta}
\begin{align*}
    \alpha_{\lambda_{u}\left(v\right)}
    \alpha_{\rho_{v}\left(u\right)}
    &= \alpha_{\lambda_{u}\left(v\right)\rho_{v}\left(u\right)}
    = \alpha_{u\left(u^{-1} + v\right)\left(u^{-1} + v\right)^{-1}v}
    = \alpha_{u}
    \alpha_{\left(u^{-1} + v\right)\left(u^{-1} + v\right)^{-1}}\alpha_{v}
    = \alpha_{u}\alpha_{v} 
    = \alpha_{uv},
\end{align*}
hence \eqref{eq:primo} holds. Moreover,
\begin{align*}
    \rho_{\alpha^{-1}_u\!\left(b\right)}\alpha^{-1}_{\beta_a\left(u\right)}\left(a\right) &=\rho_{\alpha^{-1}_u\!\left(b\right)}\left(\left(\alpha_u^{-1}\left(a^{-1}\right)\right)^{-1}\right) &\mbox{by \eqref{eq:inv-alpha}}\\
    &=\left(\alpha_u^{-1}\left(a^{-1}\right)+\alpha_u^{-1}\left(b\right)\right)^{-1}\alpha_u^{-1}\left(b\right)\\
    &=\left(\alpha_u^{-1}\left(a^{-1}+b\right)\right)^{-1}\alpha_u^{-1}\left(b\right)\\
    &=\left(\left(\alpha^{-1}_u\left(b\right)\right)^{-1}\alpha^{-1}_u\left(a^{-1} + b\right)\right)^{-1}\\
    &= \left(\alpha^{-1}_{\beta^{-1}_b\left(u\right)}\left(b^{-1}\right)\alpha^{-1}_u\left(a^{-1} + b\right)\right)^{-1}\\
    &= \left(\alpha^{-1}_{\beta^{-1}_b\left(u\right)}\left(b^{-1}\alpha^{-1}_{\beta^{-1}_{b^{-1}}\beta^{-1}_{b}\left(u\right)}\alpha^{-1}_{u}\left(a^{-1} + b\right)\right)
    \right)^{-1} &\mbox{by \eqref{eq:mps1}}\\
    &= \left(\alpha^{-1}_{\beta^{-1}_b\left(u\right)}\left(b^{-1}\left(a^{-1} + b\right)\right)
    \right)^{-1}\\
    &=\left(\alpha^{-1}_{\beta^{-1}_b\left(u\right)}\left(\rho_b\left(a\right)\right)^{-1}\right)^{-1}\\
    &=\alpha^{-1}_{\beta_{\rho_b\left(a\right)}\beta^{-1}_b\left(u\right)}\rho_b\left(a\right)&\mbox{by \eqref{eq:inv-alpha}}
\end{align*}
hence \eqref{eq:quinto} is satisfied.
Finally, it is clear that \eqref{eq:terzo} coincides with \eqref{eq:mps1old}.\\
Therefore, by \cref{th:matched-system}, the claim follows.
\end{proof}
\end{theor}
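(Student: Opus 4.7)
The plan is to reduce the statement to \cref{th:matched-system} on the matched product of set-theoretical solutions. Specifically, I would first compute the two components of the map $r_{S\bowtie T}$ associated with $S\bowtie T$ directly from the definitions of the addition and the multiplication in \cref{th:matched-inv-semi} and the inverse formula \eqref{inverso-semigruppo-match}, and show that they coincide with the components of $r_S\bowtie r_T$ given in \eqref{eq:sol-match}. Once this identification is established, it only remains to verify that the quadruple $(r_S,r_T,\alpha,\beta)$ is a matched product system of solutions, i.e., that the six conditions \eqref{eq:primo}-\eqref{eq:quarto} hold.

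The computation of $\lambda_{(a,u)}(b,v)$ is short: unfolding $(a,u)\bigl((a,u)^{-1}+(b,v)\bigr)$ gives $\bigl(a\,\alpha_{\bar u}(\alpha^{-1}_{\bar u}(a^{-1})+b),\,u\,\beta_{\bar a}(\beta^{-1}_{\bar a}(u^{-1})+v)\bigr)$, which simplifies to $(\lambda_a\alpha_{\bar u}(b),\,\lambda_u\beta_{\bar a}(v))$ because $\alpha_{\bar u}$ and $\beta_{\bar a}$ are additive automorphisms. Then \eqref{eq:mps1old} and \eqref{eq:mps2old} turn this into the required form $(\alpha_u\lambda_{\bar a}(b),\,\beta_a\lambda_{\bar u}(v))$. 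The treatment of $\rho_{(b,v)}(a,u)=((a,u)^{-1}+(b,v))^{-1}(b,v)$ is substantially longer, since one must compute the inverse of $(x,y)$ with $x=\alpha^{-1}_{\bar u}(a^{-1})+b$ and $y=\beta^{-1}_{\bar a}(u^{-1})+v$; here I would invoke \eqref{inverso-semigruppo-match} together with \cref{lemma-inversi} to rewrite $\alpha^{-1}$ and $\beta^{-1}$ of inverses, and then repeatedly use \eqref{eq:mps1} and the absorption property $\alpha_e=\id_S$ for $e\in\E(T)$ from \cref{rem:idemp-alpha-beta} to contract the expressions that arise.

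For the six conditions of a matched product system of solutions, two of them, namely \eqref{eq:terzo} and \eqref{eq:quarto}, are literally \eqref{eq:mps1old} and \eqref{eq:mps2old}. Conditions \eqref{eq:primo} and \eqref{eq:secondo} follow from the homomorphism property of $\alpha$ and $\beta$ on the multiplicative inverse semigroups once one observes that $\lambda_u(v)\rho_v(u)=u(u^{-1}+v)(u^{-1}+v)^{-1}v$ and that the middle idempotent is killed by $\alpha$ via \cref{rem:idemp-alpha-beta}. Conditions \eqref{eq:quinto} and \eqref{eq:sesto} each require a short manipulation that rewrites $\rho$ in terms of addition and inversion, uses \eqref{eq:mps1} to slide $\alpha^{-1}$ or $\beta^{-1}$ across a product, and then applies the inverse formulas \eqref{eq:inv-alpha}-\eqref{eq:inv-beta}.

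The main obstacle will be the careful identification of the second component of $\rho_{(b,v)}(a,u)$ with $\beta^{-1}_{\overline A}\rho_{\beta_{\bar a}(v)}(u)$ (and of the first component with $\alpha^{-1}_{\overline U}\rho_{\alpha_{\bar u}(b)}(a)$): one must juggle two equivalent presentations of the auxiliary elements $\mathcal A$ and $\mathcal U$ that come out of the inverse formula, and show, using \eqref{eq:mps1old}-\eqref{eq:mps2old} and idempotent absorption, that the resulting subscripts of $\alpha^{-1}$ and $\beta^{-1}$ collapse precisely to the required $\overline U$ and $\overline A$. Once this bookkeeping is in place, everything else is routine and the conclusion follows immediately from \cref{th:matched-system}.
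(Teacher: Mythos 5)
Your proposal follows exactly the paper's own strategy: identify the components of $r_{S\bowtie T}$ with those of $r_S\bowtie r_T$ in \eqref{eq:sol-match} using \eqref{inverso-semigruppo-match}, \cref{lemma-inversi}, \eqref{eq:mps1}, and the idempotent absorption of \cref{rem:idemp-alpha-beta}, then verify that $(r_S,r_T,\alpha,\beta)$ is a matched product system of solutions and invoke \cref{th:matched-system}. The paper even economizes in the same way you anticipate, checking only \eqref{eq:primo}, \eqref{eq:quinto}, and \eqref{eq:terzo} explicitly (the latter coinciding with \eqref{eq:mps1old}) and leaving the symmetric conditions to analogous computations, so your plan is correct and essentially identical.
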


\bigskip

In the next, we focus on a particular case of the previous construction, the semidirect product of two inverse semi-braces. From now on, given a matched product system $\left(S, T, \alpha, \beta\right)$, we consider $\beta_a = \id_T$, for every $a\in S$. In this way, the inverse semigroup $\left(S\times T, \cdot\right)$ is exactly the semidirect product of the inverse semigroups $\left(S,\cdot \right)$ and $\left(T,\cdot \right)$ via $\alpha$, in the sense of
\cite{Ni83} and \cite{Pr86}. 
Let us note that this semidirect product is a particular case of the Zappa product of two semigroups with $^{u}a = \sigma\left(u\right)\left(a\right) = \alpha_u\left(a\right)$  and $u^a = \beta_a\left(u\right) = u$, for all $a\in S$ and $u\in T$. Specifically, the multiplication $\cdot$ on $S \times T$ is given by
\begin{align*}
(a,u) (b,v) = \left(a\, ^{u}b, \, uv \right),
\end{align*}
for all $(a,u), (b,v)\in S\times T$. 
\medskip

Let us recall the result essentially contained in \cite[Theorem 6]{Pr86} that, in contrast to \cref{inverse-semigroup-wazzan}, is a
characterization to obtain an inverse semigroup.
\begin{theor}\label{th:char-semiprod}
	Let $S, T$ be semigroups and $\sigma:T\to \End(S)$ a homomorphism. Then, the semidirect product of $S$ and $T$ via $\sigma$ is an inverse semigroup if and only if they hold
	\begin{enumerate}
		\item $S$ and $T$ are inverse semigroups;
		\item $\sigma(T)\subseteq \Aut(S)$.
	\end{enumerate}
\end{theor}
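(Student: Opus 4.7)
The proof splits into two implications. The backward direction $(\Leftarrow)$ is in fact a specialization of \cref{inverse-semigroup-wazzan} to the trivial case $\beta_a = \id_T$ (under which \eqref{eq:mps-idemp} is automatic), but a self-contained verification is quick. The forward direction $(\Rightarrow)$ requires extracting structural information from the single hypothesis that the semidirect product is inverse.

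For $(\Leftarrow)$, assume $S$ and $T$ are inverse semigroups and $\sigma(T) \subseteq \Aut(S)$. The first observation is that any semigroup homomorphism from an inverse semigroup into a group automatically carries inverses to inverses: cancelling in $\Aut(S)$ the identity $\sigma(u)\sigma(u^{-1})\sigma(u) = \sigma(uu^{-1}u) = \sigma(u)$ gives $\sigma(u^{-1}) = \sigma(u)^{-1}$. Applied to $e = e^{-1} \in \E(T)$, this forces $\sigma(e) = \sigma(e)^{-1}$, so $\sigma(e)$ is a self-inverse idempotent in the group $\Aut(S)$, hence $\sigma(e) = \id_S$. With this in hand, a direct computation shows $(a,u)(\sigma(u^{-1})(a^{-1}), u^{-1})(a,u) = (a a^{-1} \sigma(uu^{-1})(a), u) = (a,u)$, and symmetrically on the other side, so every element has an inverse. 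Uniqueness follows by projecting to $T$ to pin down the second component as $u^{-1}$ and then invoking uniqueness in $S$ for the first. Finally, the idempotents of $S \times T$ coincide with $\E(S) \times \E(T)$ as a set (since $\sigma(e) = \id_S$ collapses the idempotency condition to $a^2 = a$), and such pairs commute componentwise.

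For $(\Rightarrow)$, assume the semidirect product is inverse; we may assume $S$ and $T$ are both nonempty. The projection $\pi_T : S \times T \to T$, $(a,u) \mapsto u$, is a surjective semigroup homomorphism, and homomorphic images of inverse semigroups are inverse, so $T$ is inverse. To recover (1) for $S$ and condition (2), I would fix an idempotent $e \in \E(T)$ and study the slice $S \times \{e\}$: by uniqueness of the inverse of $e$ in $T$, the inverse in $S \times T$ of any $(a, e)$ must lie again in $S \times \{e\}$, so this slice is an inverse subsemigroup, with multiplication $(a, e)(b, e) = (a\, \sigma(e)(b), e)$. The pivotal step is to prove $\sigma(e) = \id_S$: once established, $(a, e) \mapsto a$ is a semigroup isomorphism $S \times \{e\} \to S$ and hence $S$ is inverse, which yields (1); moreover $\sigma(u)\sigma(u^{-1}) = \sigma(uu^{-1}) = \id_S$ together with the symmetric identity shows $\sigma(u) \in \Aut(S)$ for every $u \in T$, which yields (2).

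The heart of the argument is therefore the identity $\sigma(e) = \id_S$ for each $e \in \E(T)$. I would tackle it by unpacking the two equations $a\,\sigma(e)(c)\,\sigma(e)(a) = a$ and $c\,\sigma(e)(a)\,\sigma(e)(c) = c$ that define the unique inverse $(c, e)$ of $(a, e)$ in $S \times T$, and combining them with the idempotency $\sigma(e)^2 = \sigma(e^2) = \sigma(e)$ in $\End(S)$ and with the commutativity of idempotents of $S \times T$ applied to pairs in $S \times \{e\}$. This forces $\sigma(e)(a) = a$ for every $a \in S$. The remaining steps — verifying the explicit inverse formula in the backward direction, the componentwise commutativity of idempotents, and the projection argument for $T$ — are routine once the correct formula $(a,u)^{-1} = (\sigma(u^{-1})(a^{-1}), u^{-1})$ is guessed.
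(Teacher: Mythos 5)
Your proof is correct, but it necessarily takes a different route from the paper, because the paper does not prove this statement at all: it recalls it as ``essentially contained in'' \cite[Theorem 6]{Pr86} and leaves the proof to that reference. Measured on its own, your argument is sound, and the only place where you assert rather than derive is the one you flag yourself: the pivotal identity $\sigma(e)=\id_S$ for $e\in \E(T)$ in the forward direction. It does follow from exactly the ingredients you name, and the missing computation is short enough to record here. Since $\sigma(e)^{2}=\sigma(e^{2})=\sigma(e)$ in $\End(S)$, every element of the slice $Q:=S\times\{e\}$ satisfies
\begin{align*}
(x,e)(a,e)=\left(x\,\sigma(e)(a),\,e\right)=\left(x\,\sigma(e)^{2}(a),\,e\right)=(x,e)\left(\sigma(e)(a),\,e\right),
\end{align*}
so $q:=(a,e)$ and $q':=\left(\sigma(e)(a),e\right)$ have the same left translates in $Q$. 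Since, as you argued, $Q$ is an inverse subsemigroup (its elements' inverses have second component $e^{-1}=e$ once $T$ is known to be inverse via the projection), taking the translating elements $qq^{-1}$ and $q'q'^{-1}$ gives $q=qq^{-1}q'$ and $q'=q'q'^{-1}q$, whence $q=qq^{-1}q'q'^{-1}q=q'q'^{-1}qq^{-1}q=q'q'^{-1}q=q'$ by commutativity of idempotents, i.e., $\sigma(e)(a)=a$ for every $a\in S$. With that confirmed, the rest of your proposal goes through as written: in the backward direction your preliminary observation $\sigma(u^{-1})=\sigma(u)^{-1}$, hence $\sigma(e)=\id_S$ for idempotent $e$, is precisely what makes \eqref{eq:mps-idemp} vacuous in the specialization of \cref{inverse-semigroup-wazzan} and what validates the inverse formula $(a,u)^{-1}=\left(\sigma(u^{-1})(a^{-1}),u^{-1}\right)$, which is \eqref{inverso} of the paper; in the forward direction, once $\sigma(e)=\id_S$ is in hand, $S\cong Q$ is inverse and $\sigma(u)\sigma(u^{-1})=\sigma(u^{-1})\sigma(u)=\id_S$ yields $\sigma(T)\subseteq\Aut(S)$, exactly as you say. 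Comparatively, the paper's citation to \cite{Pr86} is economical, while your self-contained argument has the merit of producing along the way the two facts the paper actually uses downstream of this theorem, namely \eqref{aut-idemp} and \eqref{inverso}, as byproducts of the proof rather than as separate observations.
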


\noindent Let us note that condition $2.$ in the previous theorem is equivalent to the property 
\begin{align}\label{aut-idemp}
 ^{e}a = a,   
\end{align}
for all $e\in E(T)$ and $a\in S$. Such a condition derives also from \cref{rem:idemp-alpha-beta}.\\
As a consequence of \eqref{aut-idemp}, $\sigma$ is a homomorphism of inverse semigroups, namely, in addition, it holds $\sigma\left(u^{-1}\right) = \sigma\left({u}\right)^{-1}$, for every $u \in T$.
Moreover, one can check that
\begin{align}\label{inverso}
  (a,u)^{-1}
  = \left(^{u^{-1}}{a^{-1}}, u^{-1}\right),
\end{align}
for every $(a,u)\in S \times T$, which follows also from \cref{lemma-inversi}.

\bigskip

The following is an example of semidirect product of two Clifford semigroups which is not a Clifford semigroup.
Specifically, this shows that left inverse semi-braces can be obtained also starting from two particular generalized left semi-braces, as one can concretely see later in \cref{ex:gen-inve}. 
\begin{ex}\label{ex:semidirect2}
Considered the set $X:=\{1,x,y\}$, let
$S$ be the upper semilattice on $X$ with join $1$ and $T$ the commutative inverse monoid with identity $1$ for which $xx = yy= x$  and $xy = y$. If $\tau$ is the automorphism of $S$ given by the transposition $\tau := (x\, y)$, then the map $\sigma:T\to \Aut(S)$ given by $\sigma(1) = \sigma(x) = \id_S$ and $\sigma(y) = \tau$, 
is a homomorphism from $T$ into $\Aut(S)$. Therefore, by \cref{th:char-semiprod}, it follows that the semidirect product of $S$ and $T$ via $\sigma$ is an inverse semigroup. Observe that such a semigroup is not a Clifford semigroup. Indeed, since by \eqref{inverso} it holds that
$(x,y)^{-1}=(y,y)$, we have
$(x,y)(x,y)^{-1} 
= (x,x)$, but $(x,y)^{-1}(x,y)= (y,x)$.
\end{ex}

\medskip

\noindent The following result is a consequence of \cref{th:matched-inv-semi}.
\begin{cor}\label{th:semi-inverse-semi-b}
	Let $S,T$ be left inverse semi-braces and $\sigma:T\to \Aut(S)$ a homomorphism from $(T, \cdot)$ into the automorphism group of the left inverse semi-brace $S$. Then, the structure $(S\times T, +, \cdot)$ where
	\begin{align*}
	(a,u)+(b,v)&:=(a + b,\,u + v )\\
	(a,u)\,(b,v)&:= (a\, ^{u}b,\, uv ),
	\end{align*}
	for all $(a,u), (b,v)\in S\times T$, is a left inverse semi-brace. We call such an inverse semi-brace the \emph{semidirect product of $S$ and $T$ via $\sigma$} and we denote it by $S\rtimes_{\sigma}T$.
\end{cor}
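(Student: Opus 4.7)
My plan is to recognise the semidirect product as a special case of the matched product of \cref{th:matched-inv-semi}, so that the corollary becomes a direct application of that theorem. Set $\alpha_u := \sigma(u)$ for every $u \in T$ and $\beta_a := \id_T$ for every $a \in S$. Then $\alpha$ is the given homomorphism into the automorphism group of the left inverse semi-brace $S$, and in particular into $\Aut(S,+)$, while $\beta$ is the constant map at $\id_T$, trivially a homomorphism of inverse semigroups from $(S,\cdot)$ into $\Aut(T,+)$.

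The bulk of the work is verifying the two defining conditions \eqref{eq:mps1} and \eqref{eq:mps-idemp} of a matched product system. For \eqref{eq:mps1}, since $\beta_a = \id_T$ for every $a$ one has $\beta_a^{-1}(u) = u$, so the second identity collapses to the tautology $u\,v = u\,v$, while the first reduces to $\alpha_u(\alpha_u^{-1}(a)\, b) = a\,\alpha_u(b)$, which is immediate from $\alpha_u$ being a homomorphism of $(S,\cdot)$.

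The one genuinely non-trivial check is \eqref{eq:mps-idemp}, and I expect this to be the main (still minor) obstacle. The second hypothesis degenerates to $u\cdot u = u$, i.e.\ $u \in \E(T)$; then since $\Aut(S)$ is a \emph{group} and $\sigma(u)^2 = \sigma(u^2) = \sigma(u)$, I can conclude $\sigma(u) = \id_S$, whence $\alpha_u(a) = a$; the conclusion $\beta_a(u) = u$ is automatic. This is the only point at which one must exploit that $\sigma$ takes values in $\Aut(S)$ rather than in an arbitrary endomorphism semigroup (this can equivalently be read off from \cref{rem:idemp-alpha-beta}).

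Once the matched product system is in place, \cref{th:matched-inv-semi} yields a left inverse semi-brace structure on $S \times T$; it then only remains to observe that the resulting operations coincide with those claimed in the statement. The addition is componentwise by definition, and the multiplication reduces to $(a,u)(b,v) = (\alpha_u(\alpha_u^{-1}(a)\, b),\, \beta_a(\beta_a^{-1}(u)\, v)) = (a\,{}^u b,\, uv)$, as required.
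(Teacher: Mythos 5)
Your proposal is correct and follows exactly the paper's route: the paper introduces the semidirect product precisely as the matched product with $\beta_a=\id_T$ for every $a\in S$ and states \cref{th:semi-inverse-semi-b} as a direct consequence of \cref{th:matched-inv-semi}, with the verification of \eqref{eq:mps1} and \eqref{eq:mps-idemp} being the same routine checks you carry out (your observation that idempotents of $(T,\cdot)$ must act trivially is the content of \cref{rem:idemp-alpha-beta}). No gaps.
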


\medskip

\begin{ex}\label{ex:semi-brace-inverso-1}
Let $X:=\{1,x,y\}$ be a set, $S$ the cyclic group on $X$, and $T$ the commutative inverse monoid on $X$ in \cref{ex:semidirect2}. Considered $\iota:S\to S$ the automorphism of $S$ defined by $\iota(a) = -a$, for every $a\in S$, we obtain that the map $\sigma:T\to \Aut(S)$ given by 
$\sigma(1) = \sigma(x) = \id_S$, and $\sigma(y) = \iota$
is a homomorphism from $T$ into $\Aut(S)$.
Therefore, by \cref{th:char-semiprod}, it follows that the semidirect product of $S$ and $T$ via $\sigma$ is an inverse semigroup (which is a Clifford semigroup).
Now, set $a + b = a$, for all $a,b\in S$, and
$u + v = uv$, for all $u,v\in T$, then $S$ is a left semi-brace and, as seen in $1.$ of \cref{ex:first-examples}, $T$ is a generalized left semi-brace. In addition, the map $\sigma$  is a homomorphism from $T$ into the automorphism of the inverse semi-brace $S$. Therefore, by \cref{th:semi-inverse-semi-b},
$S\times T$ endowed with the following operations
\begin{align*}
	(a,u)+(b,v) =(a,\,uv )\qquad 
	(a,u)\,(b,v) = (a\, ^{u}b,\, uv ),
	\end{align*}
	for all $(a,u), (b,v)\in S\times T$, is a left inverse semi-brace that is the semidirect product of $S$ and $T$ via $\sigma$. In particular, $S \rtimes_{\sigma} T$ is a generalized left semi-brace.
\end{ex}
\medskip

The semidirect product of two generalized semi-braces can be an inverse semi-brace which is not a generalized semi-brace.
\begin{ex} \label{ex:semi-brace-inverso-2} Considered the inverse semigroups $(S, \cdot)$ and $(T, \cdot)$ in the
\cref{ex:semidirect2}, set $a + b = b$, for all $a,b\in S$, and
$u + v = uv$, for all $u,v\in T$, then $S$ is a left semi-brace and $T$ is a generalized semi-brace. 
Furthermore, the map $\sigma:T\to \Aut(S)$ defined by $\sigma(1) = \sigma(x) = \id_S$ and $\sigma(y) = \tau$, is a homomorphism from $T$ into the automorphism of the inverse semi-brace $S$. Therefore, by \cref{th:semi-inverse-semi-b},
$S\times T$ endowed with the following operations
\begin{align*}
	(a,u)+(b,v) =(b,\,uv )\qquad
	(a,u)\,(b,v) = (a\, ^{u}b,\, uv ),
	\end{align*}
	for all $(a,u), (b,v)\in S\times T$, is a left inverse semi-brace that is the semidirect product of $S$ and $T$ via $\sigma$. 
\end{ex}

\medskip
As a consequence of \cref{th:sol-match}, given two solutions $r_S$ and $r_T$ on two left inverse semi-braces $S$ and $T$, respectively, the map $r_B$ associated to a semidirect product $B:=S \rtimes_{\sigma}T$ via a homomorphism $\sigma: T\to \Aut(S)$ is still a solution.

\begin{cor}\label{theor:soluzione-semidiretto}
Let $S$, $T$ be left inverse semi-braces, $r_S$ and $r_T$ solutions on $S$ and $T$, respectively, and $\sigma:T\to \Aut(S)$ a homomorphism from $T$ into the automorphism group of the inverse semi-brace $S$. Then, the map $r_B$ associated to the semidirect product $B:=S \rtimes_{\sigma} T$, given by
\begin{align*}
   r_B\left(\left(a,u\right), \left(b,v\right)\right) 
   = \left(\left({}^{u}\lambda_{{}^{u^{-1}}{a}}\left(b\right),\lambda_u\left(v\right)\right),\,
   \left(^{\lambda_u\left(v\right)^{-1}}\rho_{_{{}^u b}}\left(a\right), \rho_v\left(u\right)\right)\right),
\end{align*}
for all $(a,u),(b,v)\in S\times T$, is a solution. In particular, such a solution $r_B$ is actually the semidirect product of the solutions $r_S$ and $r_T$.
\end{cor}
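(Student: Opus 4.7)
The plan is to deduce the corollary directly from the matched product theorem for solutions, \cref{th:sol-match}, by realising the semidirect product $B = S \rtimes_{\sigma} T$ as a particular matched product. Concretely, I set $\alpha_u := \sigma(u)$ for every $u \in T$ and $\beta_a := \id_T$ for every $a \in S$, and I aim to show that $(S, T, \alpha, \beta)$ is a matched product system of left inverse semi-braces in the sense of \cref{def:mps-inv}, whose associated matched product $S \bowtie T$ of \cref{th:matched-inv-semi} coincides with the semidirect product $S \rtimes_{\sigma} T$ of \cref{th:semi-inverse-semi-b}.

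The work is concentrated in the verification of \eqref{eq:mps1} and \eqref{eq:mps-idemp}. Since $\sigma$ takes values in the automorphism group of the inverse semi-brace $S$, each $\alpha_u$ is in particular a homomorphism of $(S, \cdot)$, so $\alpha_u(\alpha_u^{-1}(a)\,b) = a\,\alpha_u(b)$, which matches $a\,\alpha_{\beta_a^{-1}(u)}(b)$ because $\beta_a = \id_T$; the companion identity $\beta_a(\beta_a^{-1}(u)v) = u\,\beta_{\alpha_u^{-1}(a)}(v)$ collapses to $uv = uv$ and is trivial. For \eqref{eq:mps-idemp}, the hypothesis $\beta_a(\beta_a^{-1}(u)u) = u$ reduces to $u = u^2$, i.e.\ $u \in \E(T)$; since the image of $\sigma$ lies in the \emph{group} $\Aut(S)$, any idempotent of $T$ is sent to $\id_S$, whence $\alpha_u(a) = a$ holds automatically, while $\beta_a(u) = u$ is tautological. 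Thus $(S, T, \alpha, \beta)$ is a matched product system and the hypotheses of \cref{th:sol-match} are met, giving that $r_B = r_S \bowtie r_T$ is a solution.

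It then remains to verify that the general formula \eqref{eq:sol-match} specialises to the expression displayed in the corollary. With $\beta_a = \id_T$ one has $\bar u = u$ and $\bar a = {}^{u^{-1}}a$; setting $A := {}^u \lambda_{{}^{u^{-1}} a}(b)$ and $U := \lambda_u(v)$, one further gets $\bar U = U = \lambda_u(v)$ and $\bar A = \alpha^{-1}_U(A) = {}^{\lambda_u(v)^{-1}}A$. Substituting these values into the four components of \eqref{eq:sol-match} yields precisely the stated formula for $r_B$. No step is genuinely delicate; the only point requiring care is the bookkeeping of the specialisation, which is routine.
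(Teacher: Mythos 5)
Your proposal is correct and follows exactly the paper's route: the paper also obtains this corollary by specialising the matched product construction to $\beta_a = \id_T$ (cf.\ the discussion preceding \cref{th:char-semiprod} and \cref{th:semi-inverse-semi-b}) and then invoking \cref{th:sol-match}, with your verification of \eqref{eq:mps1} via multiplicativity of $\sigma(u)$ and of \eqref{eq:mps-idemp} via $\alpha_e = \id_S$ for $e \in \E(T)$ matching \cref{rem:idemp-alpha-beta} and \eqref{aut-idemp}. The bookkeeping $\bar{u} = u$, $\bar{a} = {}^{u^{-1}}a$, $\overline{U} = \lambda_u(v)$ in the specialisation of \eqref{eq:sol-match} is also exactly what the paper's formula encodes, so there is nothing to add.
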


\medskip

\begin{ex}\label{ex:semiprod-trivial}
Let $S$, $T$ be the trivial left inverse semi-braces in \cref{trivial-semibrace} with $(S,+)$ a left zero semigroup and $(T,+)$ a right zero semigroup, and $\sigma: T \to \Aut\left(S\right)$ a homomorphism from $\left(T,\cdot\right)$  into the automorphism group of the left inverse semi-brace $S$. Then, by \cref{th:semi-inverse-semi-b}, $B:=S \rtimes_{\sigma} T$ is a left inverse semi-brace and, by \cref{theor:soluzione-semidiretto}, the map $r_B$ associated to $B$ given by
\begin{align*}
    r_B\left(\left(a,u\right), \left(b,v\right)\right)=\left(\left(aa^{-1}, uv\right), \left(^{v^{-1}}\left(a \, ^{u}b\right), v^{-1}v\right)\right)
\end{align*}
is a solution. Specifically, it is exactly the semidirect product of the solution $r_S$ and $r_T$ in \cref{ex:sol} given by $r_S(a,b)=(aa^{-1},ab)$ and $r_T(u,v)=(uv,v^{-1}v)$, respectively. Moreover, $r_B$ is idempotent, consistently with \cite[Corollary 5]{CCoSt20}.
\end{ex}

\medskip

To analyze the next examples, 
let us recall the notions of \emph{index} and \emph{period} of a solution $r$ introduced in \cite{CaCoSt19} that are respectively defined as 
    \begin{align*}
        &\indd{\left(r\right)}
        :=\min\left\{\left.j \,\right|\, j\in\mathbb{N}_0, \, \exists \, l\in \mathbb{N}\ r^j=r^l , \ j\neq l\right\},\\
        &\perr{\left(r\right)}
        :=\min\left\{\left.k\,\right| \, k\in\mathbb{N}, \, r^{\indd{\left(r\right)}+k}=r^{\indd{\left(r\right)}}\right\}.
    \end{align*}
    These definitions of the index and the order are slightly different from the classical ones (cf. \cite[p. 10]{Ho95}). This choice is functional to distinguish bijective solutions, having index $0$, from non-bijective ones, having index a positive integer.  

\begin{ex}
Let $B:=S \rtimes_{\sigma}T$ be the left inverse semi-brace in \cref{ex:semi-brace-inverso-1}. Then, by \eqref{aut-idemp}, the map $r_B$ associated to $B$ is given by
\begin{align*}
    r_B\left(\left(a,u\right),\left(b,v\right) \right)=\left(\left(1,uu^{-1}v \right), \left(^{v^{-1}} \left(a\, ^u b\right), uv^{-1}v\right)\right),
\end{align*}
and, by \cref{theor:soluzione-semidiretto}, it is a solution. Specifically, $r_B$ is the semidirect product of the solutions $r_S(a,b)=(1,ab)$ on $S$ (cf. \cref{ex:sol}) and $r_T(u,v)=(uu^{-1}v,uv^{-1}v)$ on $T$ (see \cref{ex-sol-semibrace-clifford}), respectively. 
Moreover, by \cite[Proposition 10]{CaCoSt19}, such a solution $r_B$ is cubic with $\ind(r_B)=1$ and $\per(r_B)=2$.
\end{ex}

\medskip 

\begin{ex}\label{ex:gen-inve} 
Let $B:=S \rtimes_{\sigma}T$ be the left inverse semi-brace in
\cref{ex:semi-brace-inverso-2}. Then, by \eqref{aut-idemp}, the map $r_B$ associated to $B$ is given by
\begin{align*}
    r_B\left(\left(a,u\right),\left(b,v\right) \right)=\left(\left(a \, ^u b,uu^{-1}v \right), \left(^{v^{-1}u} \left( b^{-1}b\right), uv^{-1}v\right)\right),
\end{align*}
and, by \cref{theor:soluzione-semidiretto}, it is a solution. In particular, $r_B$ is the semidirect product of the solutions $r_S(a,b)=(ab, b^{-1}b)$ on $S$ (cf. \cref{ex:sol}) and $r_T(u,v)=(uu^{-1}v,uv^{-1}v)$ on $T$ (see \cref{ex-sol-semibrace-clifford}), respectively. Furthermore, by \cite[Proposition 10]{CaCoSt19}, such a solution $r_B$ is cubic with $\ind(r_B)=1$ and $\per(r_B)=2$.
\end{ex}

\bigskip

\section{The double semidirect product of left inverse semi-braces}
This section is devoted to present a new construction of left inverse semi-braces which includes the semidirect product in \cref{th:semi-inverse-semi-b}, that is the double semidirect product. 
In particular, we show that under mild assumptions the map associated to the double semidirect product of arbitrary left semi-braces is a solution. 

\medskip

\begin{theor}\label{th:double-inv-semi}
  		Let $S$ and $T$ be two left inverse semi-braces,  $\sigma: T\to \Aut\left(S\right)$ a homomorphism from $\left(T,\cdot\right)$ into the automorphism group of the left inverse semi-brace $S$, with ${}^u a:= \sigma(u)(a)$, for all $a \in S$ and $u \in T$, and $\delta:S\to \End\left(T\right)$ an anti-homomorphism from $\left(S, +\right)$ into the endomorphism semigroup of $\left(T, +\right)$, with $u^a:=\delta(a)(u)$, for all $a \in S$ and $u \in T$. If the condition
	\begin{align}\label{eq:semibrace-sigma-delta}
        \left(uv\right)^{\lambda_{a}\left({}^ub\right)}
		+ u\left(\left(u^{-1}\right)^b
		+ w\right)
		= u \left(v^b +w\right)
	\end{align}
		holds, for all $a,b \in S$ and $u,v, w\in T$, then $S \times T$ with respect to
		\begin{align*}
		 \left(a,u\right)+\left(b,v\right)&:=\left(a+b, \, u^b+v\right)\\
		 \left(a,u\right)\left(b,v\right)&:=\left(a \ ^{u}{b}, \, uv\right),
		\end{align*}
		for all $\left(a,u\right), \left(b,v\right) \in S \times T$,
		is a left inverse semi-brace. We call such a left inverse semi-brace the \emph{double semidirect product} of $S$ and $T$ via $\sigma$ and $\delta$.
		\begin{proof}
		 At first, note that the structure $\left(S\times T, +\right)$ is a semigroup since it is exactly the semidirect product of the semigroup $\left(S,+\right)$ and $\left(T,+\right)$ via $\delta$.
		Moreover, by \cref{th:char-semiprod}, $(S \times T, \cdot)$ is an inverse semigroup. Thus, it only remains to prove that \eqref{key1} is satisfied.
		If $(a,u), (b,v), (c, w) \in S \times T$, we obtain
			 \begin{align*}
	 \left(a,u\right)&\left(\left(b,v\right)+\left(c,w\right)\right)= \left(a,u\right)\left(
	b+c, v^c+w\right)=\left( a \ {}^{u}\left(b+c\right), u \left(v^c+w\right)\right)
	\end{align*}
	and
		\begin{align*}
		   \left(a,u\right)\left(b,v\right)
		    + &\left(a,u\right)\left(\left(a,u\right)^{-1} + \left(c,w\right)\right)\\
		    &=\left(a \ ^{u}{b}, \, uv\right)+\left(a, u \right)\left(\left({}^{u^{-1}}a^{-1}, u^{-1}\right) + \left(c, w\right)\right)&\mbox{by \eqref{inverso}}\\
		    &=\left(a \ ^{u}{b}, \, uv\right)+\left(a,u\right)\left({}^{u^{-1}}a^{-1}+c,\, \left(u^{-1}\right)^c+w\right)\\
		    &=\left(a \ ^{u}{b}, \, uv\right)+\left(a \ ^{u}{\left(^{u^{-1}}{a^{-1}}+c\right)}, \, u\left(\left(u^{-1}\right)^c+w\right)\right)\\
		    &= \left(a \ ^{u}{b}, \, uv\right) + \left(a \left(a^{-1} + \ ^{u}c\right), \, u\left(\left(u^{-1}\right)^c+w\right)\right)&\mbox{by \eqref{aut-idemp}}
		    \\
		    &= \left(a \ ^{u}{b} + \lambda_{a}\left(^{u}c\right), \, \left(uv\right)^{\lambda_a\left(^{u} c\right)}+ u\left(\left(u^{-1}\right)^c+w\right)\right)\\
		     &=\left(a \left( ^{u}{b} +  ^{u}{c}\right), \, \left(uv\right)^{\lambda_a\left(^{u} c\right)}+ u\left(\left(u^{-1}\right)^c+w\right)\right),
		    \end{align*}
		hence, by \eqref{eq:semibrace-sigma-delta} the claim follows.
	\end{proof}
\end{theor}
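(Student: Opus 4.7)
The plan is to verify the three required conditions for $(S\times T, +, \cdot)$ to be a left inverse semi-brace in turn, reserving most of the work for the brace identity \eqref{key1}, which is the only place where the hypothesis \eqref{eq:semibrace-sigma-delta} is needed.

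First I would check that $(S\times T, +)$ is a semigroup. Since $\delta$ is an anti-homomorphism from $(S,+)$ into $\End(T,+)$, one has $u^{a+b}=(u^{a})^{b}$ and $(u+v)^{a}=u^{a}+v^{a}$ for all $a,b\in S$ and $u,v\in T$; associativity of $+$ on $S\times T$ then follows by direct expansion, exactly as in the classical semidirect product of semigroups. Second, $(S\times T, \cdot)$ is an inverse semigroup by \cref{th:char-semiprod}, applied to the homomorphism $\sigma:T\to\Aut(S)$; in particular the inverse of $(a,u)$ is given by \eqref{inverso}, and one has $^{e}a=a$ for every $e\in E(T)$ by \eqref{aut-idemp}.

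The main work is to verify \eqref{key1} for the two operations defined on $S\times T$. Fix $(a,u),(b,v),(c,w)\in S\times T$. Expanding the left-hand side of \eqref{key1} directly from the definitions gives
\begin{align*}
 (a,u)\bigl((b,v)+(c,w)\bigr)
 = \bigl(a\,{}^{u}(b+c),\; u\,(v^{c}+w)\bigr).
\end{align*}
Since $\sigma(u)\in\Aut(S)$ respects the sum of $S$, the first component equals $a({}^{u}b+{}^{u}c)$, which by the brace identity \eqref{key1} in $S$ rewrites as $a\,{}^{u}b + a(a^{-1}+{}^{u}c) = a\,{}^{u}b + \lambda_{a}({}^{u}c)$. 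For the right-hand side, I would compute $(a,u)^{-1}+(c,w)=\bigl({}^{u^{-1}}a^{-1}+c,\,(u^{-1})^{c}+w\bigr)$ using \eqref{inverso}, then multiply by $(a,u)$ and use $\sigma(u)\sigma(u^{-1})=\sigma(uu^{-1})=\id_S$, which holds by \eqref{aut-idemp} applied to the idempotent $uu^{-1}\in E(T)$. This yields
\begin{align*}
 (a,u)\bigl((a,u)^{-1}+(c,w)\bigr)
 = \bigl(\lambda_{a}({}^{u}c),\; u((u^{-1})^{c}+w)\bigr),
\end{align*}
so that, summing with $(a,u)(b,v)=(a\,{}^{u}b, uv)$, the right-hand side of \eqref{key1} becomes
\begin{align*}
 \bigl(a\,{}^{u}b+\lambda_{a}({}^{u}c),\; (uv)^{\lambda_{a}({}^{u}c)}+u((u^{-1})^{c}+w)\bigr).
\end{align*}
The first components on both sides now agree, so the identity reduces exactly to
\begin{align*}
 (uv)^{\lambda_{a}({}^{u}c)}+u((u^{-1})^{c}+w) = u(v^{c}+w),
\end{align*}
which is the hypothesis \eqref{eq:semibrace-sigma-delta} with the roles of $b$ and $c$ renamed.

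I expect no real obstacle beyond careful bookkeeping: once one recognises that \eqref{aut-idemp} collapses $\sigma(uu^{-1})$ to the identity and that the brace identity in $S$ handles the first coordinate, the hypothesis \eqref{eq:semibrace-sigma-delta} is designed precisely to fit the residual second-coordinate equality. The mild subtlety is ensuring that the sum in $S\times T$ is unfolded in the correct order (the $\delta$-twist only appears on the left summand), which is what produces the asymmetric term $(uv)^{\lambda_{a}({}^{u}c)}$ and thereby matches the shape of \eqref{eq:semibrace-sigma-delta}.
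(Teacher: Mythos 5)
Your proposal is correct and follows essentially the same route as the paper's proof: the same semidirect-product observation for $(S\times T,+)$, the same appeal to \cref{th:char-semiprod} together with \eqref{inverso} for the multiplicative structure, and the same expansion of both sides of \eqref{key1}, using \eqref{aut-idemp} to collapse $\sigma(uu^{-1})$ and the brace identity in $S$ to match the first components, so that the identity reduces precisely to \eqref{eq:semibrace-sigma-delta} with $b$ renamed to $c$. There is nothing substantive to add or correct.
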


\medskip

We specialize the \cref{th:double-inv-semi} for (left cancellative) left semi-braces and skew left braces.
\begin{cor}\label{prop:weak-left-canc-semi}
Let $S,T$ be (left cancellative) left semi-braces, $\sigma: T\to \Aut\left(S\right)$ a homomorphism from $\left(T,\cdot\right)$ into the automorphism group of $(S, +, \cdot)$, and $\delta:S\to \End\left(T\right)$ an anti-homomorphism from $\left(S, +\right)$ into the automorphism group of $\left(T, +\right)$ satisfying \eqref{eq:semibrace-sigma-delta}. Then, the double semidirect product of $S$ and $T$ via $\sigma$ and $\delta$ is a (left cancellative) semi-bracee. 
\end{cor}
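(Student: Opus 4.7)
My plan is to derive this corollary almost directly from \cref{th:double-inv-semi}, noting that the two special classes (left semi-braces, and left cancellative left semi-braces) are closed under the relevant semidirect-product constructions on both sides of the structure.

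First, since every left semi-brace is a left inverse semi-brace (the multiplicative group being, in particular, an inverse semigroup in which the unique inverse coincides with the group inverse), I would apply \cref{th:double-inv-semi} to $S$ and $T$ to obtain at once that $S\times T$ with the additive and multiplicative operations displayed in the theorem is a left inverse semi-brace. This takes care of the key axiom \eqref{key1} for free.

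Next, I would promote the multiplicative inverse semigroup of $S\bowtie T$ to a group. Since $(S,\cdot)$ and $(T,\cdot)$ are groups and $\sigma\colon T\to\Aut(S,\cdot)$ is a homomorphism of groups, the multiplication $(a,u)(b,v)=(a\,{}^ub,\,uv)$ is exactly the classical (external) semidirect product of groups $S\rtimes_\sigma T$, which is a group with identity $(1_S,1_T)$ and inverses $(a,u)^{-1}=({}^{u^{-1}}a^{-1},u^{-1})$ as recorded in \eqref{inverso}. Hence $(S\times T,\cdot)$ is a group, and so the left inverse semi-brace produced above is in fact a left semi-brace.

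Finally, for the parenthetical left-cancellative statement, I would show that $(S\times T,+)$ inherits left cancellativity from the two factors. Suppose $(a,u)+(b,v)=(a,u)+(c,w)$, i.e.\ $a+b=a+c$ and $u^b+v=u^c+w$. Left cancellation in $(S,+)$ forces $b=c$, whence $u^b=u^c$ and left cancellation in $(T,+)$ yields $v=w$. Thus $(S\times T,+)$ is left cancellative, and combined with the previous paragraph the double semidirect product is a left cancellative left semi-brace.

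There is essentially no hard step here: the computation underlying \eqref{key1} was already carried out in \cref{th:double-inv-semi}, and the two refinements (group structure on $\cdot$, left cancellativity of $+$) are standard facts about semidirect products of groups and of left cancellative semigroups respectively. The only point to be careful about is to use that $\sigma$ lands in $\Aut(S,\cdot)$ (so the multiplicative semidirect product is really a group and not merely an inverse semigroup) and, for the cancellative case, that the hypotheses on $\delta$ do not disrupt left cancellation of the additive part—which is automatic since left cancellation on $(a,u)+(b,v)$ reduces componentwise, independently of the values of $\delta$.
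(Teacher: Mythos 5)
Your proof is correct and follows exactly the route the paper intends: the corollary is stated without proof as an immediate consequence of \cref{th:double-inv-semi}, and your three supplementary observations (the theorem gives \eqref{key1} and the additive semigroup structure; $(S\times T,\cdot)$ is the ordinary semidirect product of groups via $\sigma$, hence a group; left cancellativity of $(S\times T,+)$ follows componentwise, independently of $\delta$) are precisely the routine details the authors leave implicit. No gaps.
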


 We remind that a skew left brace is a triple $(S, +, \cdot)$ where $(S, +)$ and $(S, \cdot)$ are groups, see \cite{GuVe17}.  
 Let us note that given two skew left braces $S$ and $T$, the double semidirect product of $S$ and $T$ is not necessarily a skew left braces, too. 
Indeed, it is a consequence of the fact that the semidirect product of two groups $(S, +)$ and $(T, +)$ with $\delta\left(T\right)\not\subseteq \Aut\left(T\right)$ in general is not a group, see \cite{Pr86}.

 \begin{cor}\label{prop:weak-skew}
    Let $S,T$ be skew left braces, $\sigma: T\to \Aut\left(S\right)$ a homomorphism from $\left(T,\cdot\right)$ into the automorphism group of $(S, +, \cdot)$, and $\delta:S\to \Aut\left(T\right)$ an anti-homomorphism from $\left(S, +\right)$ into the automorphism group of $\left(T, +\right)$ satisfying \eqref{eq:semibrace-sigma-delta}. Then, the double semidirect product of $S$ and $T$ via $\sigma$ and $\delta$ is a skew left brace. 
\end{cor}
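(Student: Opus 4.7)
The strategy is to invoke \cref{th:double-inv-semi} directly and then separately verify that both underlying semigroup structures on $S\times T$ are groups, which is the only additional content required to upgrade a left inverse semi-brace to a skew left brace. Since the hypotheses of \cref{th:double-inv-semi} are satisfied a fortiori (the fact that $\delta$ now ranges in $\Aut(T)$ is stronger than $\End(T)$), that theorem yields at once that $(S\times T,+,\cdot)$ is a left inverse semi-brace. In particular, the distributivity \eqref{key1} and the associativity of both $+$ and $\cdot$ on $S\times T$ come for free; only the existence of identity and two-sided inverses under each operation remains to be checked.

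For the multiplicative side, I would observe that $(a,u)(b,v)=(a\,{}^{u}b,\,uv)$ is the classical external semidirect product of the groups $(S,\cdot)$ and $(T,\cdot)$ along the group homomorphism $\sigma\colon T\to \Aut(S,\cdot)$; hence $(S\times T,\cdot)$ is a group, with identity $(1_S,1_T)$ and inverses given by the formula \eqref{inverso}.

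For the additive side, the task is to show that $(a,u)+(b,v)=(a+b,\,u^{b}+v)$ defines a group law. First, since $\delta$ is an anti-homomorphism from $(S,+)$, the computation $\delta(0_S)=\delta(0_S+0_S)=\delta(0_S)\delta(0_S)$ forces $\delta(0_S)=\id_{T}$, i.e.\ $u^{\,0_S}=u$ for every $u\in T$; on the other hand each $\delta(a)\in\Aut(T,+)$ fixes $0_T$. A direct substitution then shows that $(0_S,0_T)$ is a two-sided identity. For inverses, I would propose the candidate $(-a,\,-u^{-a})$, where $-$ denotes the additive inverse in $(S,+)$ and in $(T,+)$ respectively; the right-inverse identity $(a,u)+(-a,-u^{-a})=(0_S,0_T)$ is immediate from the definition, while the left-inverse identity requires the relation $(-u^{-a})^{a}=-u$, which follows from $\delta(a)\delta(-a)=\delta(-a+a)=\delta(0_S)=\id_{T}$, itself a consequence of $\delta$ being an anti-homomorphism from $(S,+)$.

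I do not foresee a genuine obstacle: the argument is essentially bookkeeping on top of \cref{th:double-inv-semi}. The one slightly delicate point is recognising that the additive operation on $S\times T$ is a \emph{right-handed} semidirect product governed by an anti-homomorphism, and that precisely the strengthening of $\delta(S)\subseteq\End(T)$ to $\delta(S)\subseteq\Aut(T)$ is what supplies the additive inverses needed to lift the conclusion from a left inverse semi-brace to a skew left brace.
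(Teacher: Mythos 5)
Your proposal is correct and follows essentially the route the paper intends: the corollary is stated as a direct consequence of \cref{th:double-inv-semi}, with the only extra content being that $(S\times T,\cdot)$ is the usual group semidirect product via $\sigma$ and that $(S\times T,+)$ becomes a group precisely because $\delta(S)\subseteq\Aut(T)$ (your cancellation argument $\delta(0_S)=\delta(0_S)\delta(0_S)\Rightarrow\delta(0_S)=\id_T$ and the inverse $(-a,-u^{-a})$ are exactly the bookkeeping the paper leaves implicit, and this is consistent with the paper's preceding remark, citing \cite{Pr86}, that the construction fails to be a skew brace when $\delta(S)\not\subseteq\Aut(T)$). The identification of a left inverse semi-brace whose two structures are groups with a skew left brace is likewise justified by the paper's own remark at the end of Section~1, so no gap remains.
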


\medskip

Now, our aim is to study the solution associated to any double semidirect product. For convenience, to calculate such a map, in the following lemma we rewrite conditions
\eqref{eq:mps1old}-\eqref{eq:mps2old} in the exponential notation.
\begin{lemma}\label{lem:azioni}
    Let $S,T$ be a left inverse semi-brace and $\sigma:T\to \Aut(S)$ a homomorphism from $(T, \cdot)$ into the automorphism of the inverse semi-brace $S$. Then, the following properties are satisfied:
    \begin{enumerate}
        \item ${}^{u}\lambda_{a}(b) = \lambda_{{}^{u}a}({}^{u}b)$
        \item ${}^{u}\rho_{b}(a) 
        = \rho_{{}^{u}b}({}^{u}a)$,
    \end{enumerate}
    for all $a,b\in S$ and $u\in T$.
\end{lemma}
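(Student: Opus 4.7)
The plan is to unfold the definitions of $\lambda_{a}(b) = a(a^{-1}+b)$ and $\rho_{b}(a) = (a^{-1}+b)^{-1}b$, then push the action of $\sigma(u)$ through using the hypothesis that $\sigma(u)$ is an automorphism of the left inverse semi-brace $S$. In particular, $\sigma(u)$ preserves $+$, preserves $\cdot$, and because it is a homomorphism of inverse semigroups (by the second lemma of Section~1), it also satisfies ${}^{u}(a^{-1}) = ({}^{u}a)^{-1}$ for every $a\in S$.

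For item~1, I would compute
\begin{align*}
    {}^{u}\lambda_{a}(b)
    = {}^{u}\!\bigl(a(a^{-1}+b)\bigr)
    = {}^{u}a \cdot {}^{u}(a^{-1}+b)
    = {}^{u}a \cdot \bigl({}^{u}(a^{-1}) + {}^{u}b\bigr)
    = {}^{u}a \cdot \bigl(({}^{u}a)^{-1} + {}^{u}b\bigr)
    = \lambda_{{}^{u}a}({}^{u}b),
\end{align*}
where the three middle equalities use, in order, that $\sigma(u)$ respects $\cdot$, that it respects $+$, and that it respects inverses. Item~2 is proved by the same strategy applied to $(a^{-1}+b)^{-1}b$: pushing $\sigma(u)$ inside gives $\bigl({}^{u}(a^{-1}+b)\bigr)^{-1}\,{}^{u}b = \bigl(({}^{u}a)^{-1}+{}^{u}b\bigr)^{-1}\,{}^{u}b = \rho_{{}^{u}b}({}^{u}a)$.

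Since each step is just an application of one of the homomorphism properties of $\sigma(u)$, there is essentially no obstacle; the only point to be careful about is invoking the inverse-preservation property at the right moment, which is legitimate because $\sigma(u)\in\Aut(S)$ and therefore is in particular a homomorphism of the inverse semigroup $(S,\cdot)$.
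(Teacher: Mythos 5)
Your proof is correct and is exactly the intended argument: the paper states this lemma without a written proof, viewing it as the exponential rewriting of \eqref{eq:mps1old}--\eqref{eq:mps2old} specialized to $\beta_a=\id_T$, and your direct unfolding of $\lambda_a(b)=a(a^{-1}+b)$ and $\rho_b(a)=(a^{-1}+b)^{-1}b$ through the semi-brace automorphism $\sigma(u)$ --- with the inverse-preservation step ${}^{u}(a^{-1})=({}^{u}a)^{-1}$ justified by the inverse-semigroup homomorphism lemma of Section~1 --- is precisely the computation the authors leave to the reader. It is worth noting that your calculation also covers item~2, which is not literally an instance of the displayed matched-product conditions, so writing it out explicitly as you do is the cleanest verification.
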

\medskip

To simplify our computations concerning the map associated to a double semidirect product, hereinfater we use the notation
\begin{align*}
    \Omega_{u,v}^{a}
    := \left(u^{-1}\right)^a+v,
\end{align*}
for all $a\in S$, $u,v\in T$. 
\begin{prop}\label{r_weakproduct}
    Let $S$, $T$ be left inverse semi-braces and $B$ the double semidirect product of $S$ and $T$ via $\sigma$ and $\delta$. Then, the map $r_B$ associated to $B$ is given by
    \begin{align*}
        r_B\left(\left(a,u\right), \left(b,v\right)\right)
        =
        \left(\left(\lambda_a\left(^{u}{b}\right), u\,\Omega_{u,v}^{b}\right),
        \left(^{\left(\Omega_{u,v}^{b}\right)^{-1}u^{-1}}{\rho_{^u b}\left(a\right)}, \, \left(\Omega_{u,v}^{b}\right)^{-1}v\right)\right),
    \end{align*}
    for all $\left(a,u\right), \left(b,v\right)\in S\times T$.
\begin{proof}
Let us compute the components of the map $r_B$. If $a,b \in S$ and $u, v \in T$, by \eqref{inverso}, we have that
\begin{align*}
\lambda_{\left(a,u\right)}\left(b,v\right)=\left(a \, ^{u}{\left(^{u^{-1}}{a^{-1}}+b\right)}, 
\, u \,\Omega_{u,v}^{b}\right)=\left(a \, \left(a^{-1}+^{u}{b}\right), \, u \,\Omega_{u,v}^{b}\right)
= \left(\lambda_a\left(^{u}{b}\right), u\, \Omega_{u,v}^{b}\right)
\end{align*}
and, by \eqref{inverso} and \cref{lem:azioni},
\begin{align*}
    \rho_{\left(b,v\right)}\left(a,u\right)
    &=\left({}^{\left(\Omega_{u,v}^{b}\right)^{-1}}{\left(^{u^{-1}}{a^{-1}}+b\right)^{-1}}  \ {}^{\left(\Omega_{u,v}^{b}\right)^{-1}}{b}, \, \left(\Omega_{u,v}^{b}\right)^{-1}v\right)\\
    &=\left( ^{\left(\Omega_{u,v}^{b}\right)^{-1}}{\left(\left(^{u^{-1}}{a^{-1}}+b\right)^{-1}b\right)}, \, \left(\Omega_{u,v}^{b}\right)^{-1}v\right)\\
    &=\left( ^{\left(\Omega_{u,v}^{b}\right)^{-1}}{\rho_b\left(\left(^{u^{-1}}{a^{-1}}\right)^{-1}\right)}, \, \left(\Omega_{u,v}^{b}\right)^{-1}v\right)\\
    &=\left(^{\left(\Omega_{u,v}^{b}\right)^{-1}}{\rho_b\left(^{u^{-1}}{a}\right)}  ,  \, \left(\Omega_{u,v}^{b}\right)^{-1}v\right)\\
    &=\left(^{\left(\Omega_{u,v}^{b}\right)^{-1}u^{-1}}{\rho_{^u b}\left(a\right)}, \left(\Omega_{u,v}^{b}\right)^{-1}v\right).
\end{align*}
Therefore, the claim follows.
\end{proof}
\end{prop}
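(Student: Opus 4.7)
The plan is to compute the two components $\lambda_{(a,u)}(b,v)$ and $\rho_{(b,v)}(a,u)$ directly from the defining formulas $\lambda_x(y)=x(x^{-1}+y)$ and $\rho_y(x)=(x^{-1}+y)^{-1}y$, using the explicit descriptions of the inverse, the addition, and the multiplication in the double semidirect product $B=S\times T$ established in \cref{th:double-inv-semi}.

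First, since the multiplicative structure on $B$ is the semidirect product of $(S,\cdot)$ and $(T,\cdot)$, formula \eqref{inverso} gives $(a,u)^{-1}=({}^{u^{-1}}a^{-1},u^{-1})$. Adding $(b,v)$ on the right in $B$ yields $({}^{u^{-1}}a^{-1}+b,\,\Omega_{u,v}^{b})$, because by definition $(u^{-1})^{b}+v=\Omega_{u,v}^{b}$. To obtain $\lambda_{(a,u)}(b,v)$, I would multiply $(a,u)$ on the left by this sum and exploit that $\sigma(u)$ is an endomorphism of $(S,+)$ together with $\sigma(u)\sigma(u^{-1})=\id_{S}$ to rewrite ${}^{u}({}^{u^{-1}}a^{-1}+b)$ as $a^{-1}+{}^{u}b$. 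This immediately produces the announced first coordinate $\lambda_{a}({}^{u}b)$ and the second coordinate $u\,\Omega_{u,v}^{b}$.

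For the $\rho$ component, the idea is first to invert $({}^{u^{-1}}a^{-1}+b,\,\Omega_{u,v}^{b})$ via \eqref{inverso}, then multiply on the right by $(b,v)$, and finally recognise the inner expression $({}^{u^{-1}}a^{-1}+b)^{-1}b$ as $\rho_{b}({}^{u^{-1}}a)$ using the involution $({}^{u^{-1}}a^{-1})^{-1}={}^{u^{-1}}a$. The only nontrivial step is to push the $\sigma$-action past $\rho_{b}$: applying \cref{lem:azioni}(2) with $a$ replaced by ${}^{u^{-1}}a$ gives $\rho_{b}({}^{u^{-1}}a)={}^{u^{-1}}\rho_{{}^{u}b}(a)$. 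Composing this with the outer $(\Omega_{u,v}^{b})^{-1}$ exponent and using that $\sigma$ is a homomorphism then collapses the two successive actions into a single exponent ${}^{(\Omega_{u,v}^{b})^{-1}u^{-1}}$, yielding exactly the claimed expression ${}^{(\Omega_{u,v}^{b})^{-1}u^{-1}}\rho_{{}^{u}b}(a)$ in the first slot and $(\Omega_{u,v}^{b})^{-1}v$ in the second.

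The main obstacle is purely notational: the formulas involve nested $\sigma$-actions, inverses both in $(S,\cdot)$ and $(T,\cdot)$, and the twisted addition on the semidirect product $(S\times T,+)$, so the bookkeeping has to be tracked carefully. Beyond the identity ${}^{u}\rho_{b}(a)=\rho_{{}^{u}b}({}^{u}a)$ of \cref{lem:azioni} and the hypothesis $\sigma(u)\in\Aut(S,+,\cdot)$, no further properties of the double semidirect product (in particular, not condition \eqref{eq:semibrace-sigma-delta}) are needed, since the claim is simply an expansion of $r_{B}$.
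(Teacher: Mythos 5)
Your proposal is correct and follows essentially the same route as the paper's proof: a direct expansion of $\lambda_{(a,u)}(b,v)$ and $\rho_{(b,v)}(a,u)$ using \eqref{inverso}, the twisted addition, and the fact that each $\sigma(u)$ is an automorphism of the left inverse semi-brace $S$, with \cref{lem:azioni} supplying the key identity $\rho_b\left({}^{u^{-1}}a\right)={}^{u^{-1}}\rho_{{}^{u}b}\left(a\right)$ (which, as you should note, also silently uses ${}^{u^{-1}u}b=b$, guaranteed by \eqref{aut-idemp}). Your closing observation that condition \eqref{eq:semibrace-sigma-delta} plays no role in the computation itself matches the paper, which invokes it only to establish that $B$ is a left inverse semi-brace in the first place.
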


\noindent Note that the second component of \, $\lambda_{(a,u)}(b,v)$ \, and \, $\rho_{(b,v)}(a,u)$ \, can be written also as
\begin{align*}
u \,\Omega_{u,v}^{b}
=
u \left(u^{-1}\right)^b+\lambda_u\left(v\right)
\qquad 
\left(\Omega_{u,v}^{b}\right)^{-1}v =\rho_v\left(\left(\left(u^{-1}\right)^b\right)^{-1}\right),
\end{align*}
for all $(a,u), (b,v) \in S \times T$. We will use the two forms at the convenience of our computations.
\medskip

Let us note that the maps associated to the double semidirect product of two skew left braces and that of two left cancellative left braces, applying \cref{prop:weak-skew} and  \cref{prop:weak-left-canc-semi}, are automatically solutions.
Since in general this does not happen, now we focus on sufficient conditions that allow for constructing new solutions through the double semidirect product of left semi-braces. We highlight that the  condition $1.$ in the following theorem can be generalized as one can see later in \cref{rem:altracondizione}, but our choice was dictated by the need to find examples in an easier way. 

\begin{theor}\label{th:weak-asym-groups}
      Let $S$, $T$ be left semi-braces and $B$ the double semidirect product of $S$ and $T$ via $\sigma$ and $\delta$. If $r_S$ and $r_T$ are solutions associated to $S$ and $T$, respectively, and the following are satisfied  
      \begin{enumerate}
          \item $\left(u^{1}\right)^a = u^a$,
          \item $1^a + u = 1 + u$,
      \end{enumerate}
      for all $a\in S$ and $u\in T$, then the map $r_B$ associated to $B$ is a solution.
      \end{theor}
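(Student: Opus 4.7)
The plan is to invoke the characterization in \cref{th-generalized-sol}. Since $S$ and $T$ are left semi-braces and the double semidirect product requires \eqref{eq:semibrace-sigma-delta}, $B$ is itself a left semi-brace by \cref{prop:weak-left-canc-semi}. Consequently $r_B$ is a solution if and only if
\begin{align*}
(a,u) + \lambda_{(b,v)}(c,w)\bigl((1,1) + \rho_{(c,w)}(b,v)\bigr) = (a,u) + (b,v)\bigl((1,1) + (c,w)\bigr)
\end{align*}
holds for all $(a,u),(b,v),(c,w) \in B$, where $(1,1)$ is the identity of the group $(B,\cdot)$.

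The strategy would be to expand both sides using the explicit formulas of \cref{r_weakproduct} together with the definitions of $+$ and $\cdot$ in $B$, and then match the resulting identity componentwise. For the first ($S$) component, after pushing the $T$-action through $\lambda$ and $\rho$ via \cref{lem:azioni}, the equation reduces to the instance of \eqref{eq:condsolution} in $S$ applied to $a$, $b$ and ${}^{v}c$, which is valid by \cref{th-generalized-sol} since $r_S$ is a solution. The $\sigma$-action does not disturb the $S$-component because the first coordinate of the sum in $B$ is simply $a+b$, uncontaminated by $\delta$.

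The second ($T$) component is where the two extra hypotheses do the real work. Upon expansion one meets sums of the shape $u^{b} + v^{c} + \cdots$ together with $\delta$-twists originating from the middle unit $(1,1)$, namely superscripts of the form $1^{\bullet}$ and $(\bullet)^{1}$. Condition 2, $1^{a} + u = 1 + u$, collapses the middle-unit contribution of $(1,1)$ in $B$ to the middle-unit contribution of $1 \in T$; condition 1, $(u^{1})^{a} = u^{a}$, prevents doubling of exponents when an element already exponentiated at $1$ is re-exponentiated at $a$. After these simplifications and repeated use of \eqref{eq:semibrace-sigma-delta}, together with the anti-homomorphism property of $\delta$ on $(S,+)$, the $T$-component reduces to the instance of \eqref{eq:condsolution} in $T$ applied to $u$, $v$ and $w$, which holds because $r_T$ is a solution.

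The main obstacle is the bookkeeping of nested $\delta$-exponents in the $T$-component: each factor carries a superscript which is itself a sum in $S$, and after addition these superscripts accumulate further summands. Commuting them past each other to expose the $T$-side of \eqref{eq:condsolution} requires careful and repeated invocation of \eqref{eq:semibrace-sigma-delta} in conjunction with conditions 1 and 2, whereas the $S$-component reduction is essentially automatic once \cref{lem:azioni} is applied.
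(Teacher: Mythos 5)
Your global skeleton does coincide with the paper's proof: $(B,\cdot)$ is a group (the semidirect product of the groups $(S,\cdot)$ and $(T,\cdot)$ via $\sigma$), so $B$ is a left semi-brace and \cref{th-generalized-sol} reduces everything to verifying \eqref{eq:condsolution} in $B$; and the $S$-component is, after \cref{lem:azioni}, exactly \eqref{eq:condsolution} in $S$ at $(a,b,{}^{v}c)$ --- all of this matches the paper. (Incidentally, you need not route through \cref{prop:weak-left-canc-semi}, whose hypothesis on $\delta$ is stronger than that of the theorem; that $B$ is a left semi-brace already follows from \cref{th:double-inv-semi} together with the observation that $(B,\cdot)$ is a group.)

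The gap lies in your account of the $T$-component, which is where the entire content of the proof sits. After expansion, the equality to be shown is
\begin{align*}
u^{\lambda_b\left({}^{v}c\right)\left(1+\rho_{{}^{v}c}\left(b\right)\right)} + v\,\Omega_{v,w}^{c}\left(1^{\ast}+\left(\Omega_{v,w}^{c}\right)^{-1}w\right)
= u^{b\left(1+{}^{v}c\right)} + v\left(1^{c}+w\right),
\end{align*}
where $\Omega_{v,w}^{c}=\left(v^{-1}\right)^{c}+w$ and $\ast$ denotes a certain element of $S$. This is \emph{not} an instance of \eqref{eq:condsolution} in $T$ at $(u,v,w)$, as your plan asserts, and it cannot be massaged into one: the $\delta$-twist in $\left(v^{-1}\right)^{c}$ changes the base point, so the factor $v\,\Omega_{v,w}^{c}(\cdots)$ does not in general acquire the shape $\lambda_{v}(w)\left(1+\rho_{w}(v)\right)$. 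The paper instead splits the work in two. For the exponents, condition 1 together with the anti-homomorphism identity $u^{1+x}=\left(u^{1}\right)^{x}$ gives $u^{\lambda_b\left({}^{v}c\right)\left(1+\rho_{{}^{v}c}\left(b\right)\right)} = u^{1+\lambda_b\left({}^{v}c\right)\left(1+\rho_{{}^{v}c}\left(b\right)\right)} = u^{1+b\left(1+{}^{v}c\right)} = u^{b\left(1+{}^{v}c\right)}$; that is, \eqref{eq:condsolution} \emph{in $S$} (with first argument $1$) must be applied a second time, \emph{inside the $\delta$-exponent} --- a step absent from your sketch, where condition 1 is assigned only a ``no doubling of exponents'' role. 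For the remaining factor, condition 2 first collapses $1^{\ast}$ to $1$; then, setting $x:=\left(\left(v^{-1}\right)^{c}\right)^{-1}$, one recognizes $x\,\Omega_{v,w}^{c}=\lambda_{x}(w)$ and $\left(\Omega_{v,w}^{c}\right)^{-1}w=\rho_{w}(x)$, so \eqref{eq:condsolution} in $T$ is applied at the pair $(x,w)$ --- not at $(u,v,w)$ --- yielding $v\,\Omega_{v,w}^{c}\left(1+\left(\Omega_{v,w}^{c}\right)^{-1}w\right)=v\left(1+w\right)$, which condition 2 finally converts into $v\left(1^{c}+w\right)$. Note also that \eqref{eq:semibrace-sigma-delta} is never invoked in this verification, contrary to your ``repeated use'': its only role was to make $B$ a left inverse semi-brace in \cref{th:double-inv-semi}. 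So while your strategy is the paper's, the plan as written stalls precisely at the step that carries the proof.
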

      \begin{proof}
      By \cref{th-generalized-sol}, the map $r_B$ associated to $B$ is a solution if and only if the condition \eqref{eq:condsolution} is satisfied. If $(a,u), (b, v), (c,w) \in S \times T$, then we get
       \begin{align*}
           &\lambda_{\left(b,v\right)}\left(c,w\right)
           \left(\left(1,1\right) + \rho_{\left(c,w\right)}\left(b,v\right)\right)\\
           &=\lambda_{\left(b,v\right)}\left(c,w\right)\left(\left(1,1\right) +\left(^{\left(\Omega_{v,w}^{c}\right)^{-1}} \rho_c\left(^{v^{-1}} b\right)\right), \left(\Omega_{v,w}^{c}\right)^{-1}w\right)\\
           &=\left(\lambda_b\left(^v c\right), \, v\, \Omega_{v,w}^{c}\right) \left(1+^{\left(\Omega_{v,w}^{c}\right)^{-1}} \rho_c\left(^{v^{-1}} b\right), 1^{^{\left(\Omega_{v,w}^{c}\right)^{-1}} \rho_c\left(^{v^{-1}} b\right)}+\left(\Omega_{v,w}^{c}\right)^{-1}w\right)\\
          &=\left(\lambda_b\left(^v c\right)\, \, ^{v \,\Omega_{v,w}^{c}}\left(1+^{\left(\Omega_{v,w}^{c}\right)^{-1}} \rho_c\left(^{v^{-1}} b\right)\right),  v \, \Omega_{v,w}^{c} \left( 1^{^{\left(\Omega_{v,w}^{c}\right)^{-1}} \rho_{c}\left(^{v^{-1}} b\right)}+\left(\Omega_{v,w}^{c}\right)^{-1}w \right)\right)\\
          &=\left(\lambda_b\left(^v c\right)\left(1 +\, ^v \, \left(\rho_c\left(^{v^{-1}} b\right)\right)\right),\,  v\, \Omega_{v,w}^{c} \left( 1^{^{\left(\Omega_{v,w}^{c}\right)^{-1}} \rho_c\left(^{v^{-1}} b\right)}+\left(\Omega_{v,w}^{c}\right)^{-1}w \right)\right),
\end{align*}
hence
\begin{align*}
    &\left(a,u\right)
           +\lambda_{\left(b,v\right)}\left(c,w\right)
           \left(\left(1,1\right) + \rho_{\left(c,w\right)}\left(b,v\right)\right)\\
           &= \left(a + \lambda_b\left(^v c\right)\left(1 + \rho_{{}^v c}\left(b\right)\right),  u^{\lambda_b\left(^v c\right)\left(1 + \rho_{{}^v c}\left(b\right)\right)} + v\,\Omega_{v,w}^{c}\left(
           1^{ ^  {\left(\Omega_{v,w}^{c}\right)^{-1}} \rho_c\left({}^{v^{-1}}b\right)}
+ \left(\Omega_{v,w}^{c}\right)^{-1}w\right)\right)
\end{align*}
        and
      \begin{align*}
          \left(a,u\right)+\left(b,v\right)\left(\left(1,1\right)+\left(c,w\right)\right)&=\left(a,u\right)+\left(b,v\right)\left(1+c,\, 1^c+w\right)\\
          &=\left(a,u\right)+\left(b \, ^v \left(1+c\right),\, v\left(1^c+w\right)\right)\\
          &=\left(a+b\left(1+^v c\right),\, u^{b\left(1+\, ^vc\right)}+v\left(1^c+w\right)\right).
      \end{align*}
      Note that, by \eqref{eq:condsolution} in \cref{th-generalized-sol}, since $r_S$ is a solution, the first components are equal.
      About the second components, we have that
      \begin{align*}
          u^{\lambda_b\left(^v c\right)\left(1 + \rho_{{}^v c}\left(b\right)\right)}
          &\underset{1.}{=} \left(u^1\right)^{\lambda_b\left(^v c\right)\left(1 + \rho_{{}^v c}\left(b\right)\right)} 
          = u^{1 + \lambda_b\left(^v c\right)\left(1 + \rho_{{}^v c}\left(b\right)\right)}
          \underset{\eqref{eq:condsolution}}{=} u^{1 + b\left(1 + ^v c\right)}= \left(u^{1}\right)^{b\left(1 + ^v c\right)}\underset{1.}{=} u^{b\left(1 + ^v c\right)}.
      \end{align*}
      In addition, 
      \begin{align*}
          &v\, \Omega_{v,w}^{c} \left(
           1^{ ^  {\left(\Omega_{v,w}^{c}\right)^{-1}} \rho_c\left({}^{v^{-1}}b\right)}
           + \left(\Omega_{v,w}^{c}\right)^{-1}w\right)\\
           &= v \,\Omega_{v,w}^{c} \left(
           1+ \left(\Omega_{v,w}^{c}\right)^{-1}w\right)&\text{by $2.$}\\
           &= v(\left(v^{-1}\right)^c\left((\left(v^{-1}\right)^c\right)^{-1}\Omega_{v,w}^{c}\left(
           1+ \left(\Omega_{v,w}^{c}\right)^{-1}w\right)\\
           &= v\left(v^{-1}\right)^c\lambda_{\left(\left(v^{-1}\right)^c\right)^{-1}}\left(w\right)
           \left(1 + \rho_{w}\left(\left(\left(v^{-1}\right)^c\right)^{-1}\right)\right)\\
           &= v\left(v^{-1}\right)^c\left(\left(v^{-1}\right)^c\right)^{-1}\left(1 + w\right)&\text{by \eqref{eq:condsolution}}\\
           &= v\left(1 + w\right)\\
           &= v\left(1^{c} + w\right).&\text{by $2.$}
      \end{align*}
      Therefore, the claim follows.
      \end{proof}

\medskip

\begin{rem}\label{rem:suff-cond-canc}
    Let us observe that, if $S$ and $T$ are left cancellative left semi-brace, then conditions $1.$ and $2.$ of the previous theorem are satisfied. Indeed, if $a\in S$ and $u\in T$, since $1$ is a left identity, 
    $\left(u^{1}\right)^a = u^{1+a} = u^a$. 
    Moreover, since $1^a$ is still an idempotent of $\left(T,+\right)$, it follows that $1^a + u = u = 1 + u$, hence the condition 2. is satisfied.
\end{rem}

\medskip

\begin{rem}\label{rem:altracondizione}
The condition 1. in the \cref{th:weak-asym-groups} can be replaced with the more general one
\begin{align}\label{1.'}\tag{$1'.$}
  u^{ab}
          = \left(u^{a}\right)^{\lambda_a\left(b\right)},
\end{align}
for all $a,b \in S$ and $u \in T$.
Indeed, if $a\in S$, since 
$1 + a = 1\left(1 + a\right) 
= 1 +\lambda_1\left(a\right)$,
we obtain that
\begin{align*}
    \left(u^1\right)^a
    = u^{1 + a}
    = u^{1 +\lambda_1\left(a\right)}
    =\left(u^1\right)^{\lambda_1\left(a\right)}
    \underset{(1'.)}{=} u^{1\cdot a}
    = u^{a},
\end{align*}
hence 1. is satisfied. We also underline that the condition $1'.$ holds also for left cancellative left semi-braces. Indeed, in this case, if $a,b\in S$ we have that 
$ab = a\left(1+b\right) = a + \lambda_a\left(b\right)$, and so
\begin{align*}
    u^{a b}
    = u ^{a + \lambda_a\left(b\right)}
    = \left(u^a\right)^{\lambda_a\left(b\right)},
\end{align*}
for every $u\in T$.
\end{rem}

\medskip

In the following, we construct two left inverse semi-braces in which the additive structure is a rectangular band.
\begin{ex}
Let $S$ be the left semi-brace with $(S,+)$ the left zero semigroup and $(S, \cdot)$ a group with identity $1$, and $T$ the brace with additive group $(\mathbb{Z},+)$ and multiplication $\cdot$ given by $u\cdot v:=u+(-1)^u\,v$, for all $u, v \in \mathbb{Z}$ (see \cite[Example 2]{CCoSt16}), where it is used the juxtaposition to denote the usual multiplication in $\mathbb{Z}$. Thus, the maps $r_S\left(a,b\right)=\left(1,ab\right)$ and $r_T(u,v)=\left(\left(-1\right)^{u}v, \,\left(-1\right)^{v}u\right)$ are the solutions associated to $S$ and $T$, respectively. Moreover, let $\delta$ be the anti-homomorphism from $(S,+)$ into the endomorphism semigroup  of $(T,+)$ given by $\delta(a)(u)=0$, for all $a \in S$ and $u \in T$. Note that with these assumptions the conditions 1. and 2. of \cref{th:weak-asym-groups} are trivially satisfied. It is a routine computation to check that the condition \eqref{eq:semibrace-sigma-delta} holds independently by the choice of the map $\sigma$. Now, we consider two maps $\sigma$ to obtain two distinct semi-braces which are double semidirect products of $S$ and $T$.
\begin{itemize}
    \item[-] If $\sigma(u)=\id_S$, for every $u \in T$, we get that $S \times T$ is the double semidirect product of $S$ and $T$ via $\sigma$ and $\delta$ endowed with
\begin{align*}
    (a,u)+(b,v)=(a,v) \qquad (a,u)(b,v)=(ab,\, u+(-1)^u \, v),
\end{align*}
for all $(a,u), (b, v) \in S \times T$, which we denote by $B$. By \cref{th:weak-asym-groups}, the map $r_B$ associated to $B$ given by
\begin{align*}
    r_B\left(\left(a, u \right),\left(b, v \right)\right)=\left(\left(1, u+(-1)^u \, v) \right),\left(ab, 0 \right) \right)
\end{align*}
is a solution. In addition, $r_B$ is idempotent.  
\item[-] Consider $\sigma(u)= \id_S$, if $u$ is even, instead 
$\sigma(u)= \iota$, if $u$ is odd, where $\iota:\mathbb{Z}\to \mathbb{Z}$ is the map given by $\iota\left(u\right) = -u$, for every $u \in \mathbb{Z}$.
Then, $S \times T$ is the double semidirect product of $S$ and $T$ via $\sigma$ and $\delta$ endowed with
\begin{align*}
    (a,u)+(b,v)=(a, v) 
    \qquad 
    (a,u)(b,v)=(a ^{u}b,\, u+(-1)^u \, v),
\end{align*}
for all $(a,u), (b, v) \in S \times T$, which we denote by $B$. By \cref{th:weak-asym-groups}, the map $r_B$ associated to $B$ given by
\begin{align*}
    r_B\left(\left(a, u \right),\left(b, v \right)\right)=\left(\left(1, u+(-1)^u \, v) \right),\left(^{v^{-1}}\left(^{u^{-1}}a \ b\right), 0 \right) \right),
\end{align*}
is a solution. In addition, $r_B$ is idempotent. 
 \end{itemize}
We remark that the two solutions above lie in the class of solutions associated to left semi-braces having $\rho$ as an anti-homomorphism.
\end{ex}

\medskip

The following is an example of completely simple left semi-brace. In particular, by \cite[Theorem 3]{CCoSt20x-2}, the additive structure $\left(S\times T, +\right)$ is a rectangular group.
\begin{ex}
    Let $S$ be the left semi-brace with additive group $(\mathbb{Z},+)$ and multiplication $\cdot$ given by $a \cdot b= a+(-1)^ab$, for all $a, b \in \mathbb{Z}$, where it is used the juxtaposition to denote the usual multiplication in $\mathbb{Z}$. Let $T$ be the left semi-brace with multiplicative group $(\Sym_3, \circ)$, where $\circ$ is the usual composition of maps, and addition $+$ given by $u +v:=v \,\circ\, g(v^{-1}) \, \circ \, u$, for all $u, v \in \Sym_3$, with $g$ the idempotent endomorphism of $\Sym_3$ defined by 
    \begin{align*}
    g(u)
    = \begin{cases}
      (1\, 2)&\quad \text{if $u$ is odd}\\
      \ \id_{\Sym_3} &\quad \text{otherwise}  
     \end{cases},
    \end{align*}
    for every $u\in\Sym_3$ (cf. \cite[Example 5-2]{CCoSt20x-2}). 
    If $s:\Sym_3\to \mathbb{N}_0$ is the map defined by
    $s(u) = 1$, if $u$ is an odd permutation, and $s(u) = 0$, otherwise, we can briefly write  
    $g(u) = \ (1\, 2)^{\ s\left(u\right)}$, for every $u\in\Sym_3$.
    Now, set $\delta(a)=g$, for every $a \in \mathbb{Z}$, since $g$ is an idempotent endomorphism, we have that $\delta$ in an anti-homomorphism from $(\mathbb{Z},+)$ into the endomorphism monoid of $(\Sym_3, +)$. Moreover, if $a,b \in \mathbb{Z}$ and $u,v \in \Sym_3$,  we get
    \begin{align*}
        \left(u\circ v\right)^{\lambda_{a}\left({}^ub\right)}
		&+ u\circ \left(\left(u^{-1}\right)^b
		+ w\right)=g\left(u \circ v\right)+u \circ \left(g \left(u^{-1}\right)+w\right)\\
		&=g\left(u \circ v\right)+u \circ w \circ g\left(w^{-1}\right) \circ g \left(u^{-1}\right)\\
		&=u \circ w \circ g\left(w^{-1}\right) \circ g \left(u^{-1}\right) \circ g\left(u \circ w \circ g\left(w^{-1}\right) \circ g \left(u^{-1}\right)
		\right)^{-1} \circ  g\left(u \circ v\right)\\
		&=u \circ w  \circ g\left( w^{-1} \right) \circ  g\left( v\right)=u\circ  \left(v^b +w\right),
    \end{align*}
    hence \eqref{eq:semibrace-sigma-delta} holds, independently by the choice of the map $\sigma$.
    Just to give an example, let us define $\sigma:\Sym_3\to \Aut\left(S\right)$ the homomorphism from $\left(\Sym_3, \circ\right)$ into the automorphism group of the left semi-brace $S$ given by
    \begin{align*}
    \sigma(u)
    = \begin{cases}
      \iota&\quad \text{if $u$ is odd}\\
      \ \id_{\mathbb{Z}} &\quad \text{otherwise} 
     \end{cases}
      \ = \ \iota^{\ s\left(u\right)},
    \end{align*}
    for every $u \in \Sym_3$, where $\iota:\mathbb{Z}\to \mathbb{Z}$ is the map defined by $\iota(a)=-a$, for every $a\in \mathbb{Z}$. Therefore, we obtain a double semidirect product on $S\times T$ via $\sigma$ and $\delta$ which we denote by $B$.
    Now, if we consider the maps $r_S$ and $r_T$ associated to $S$ and $T$ are respectively given by $r_S(a,b)=\left(\left(-1\right)^ab, \, \left(-1\right)^ba\right)$
    and $r_T(u,v)=\left(u\circ v\circ \ (1\, 2)^{\ s\left(v\right)}\circ u^{-1},
	u\circ \ (1\, 2)^{\ s\left(v\right)}\right)$, they are solutions and
   the conditions 1. and 2. of \cref{th:weak-asym-groups} are trivially satisfied. Hence, the map $r_B$ associated to the left semi-brace $B$ having components
    \begin{align*}
        \lambda_{\left(a,u\right)}\left(b,v\right)
        &= \left(\left(-1\right)^{ a+s\left(u\right)}\,b
        , \ 
        u\circ v\circ \, (1\, 2)^{s\left(u \circ v\right)} 
        \right)\\
        \rho_{\left(b,v\right)}\left(a,u\right)&= \left(^{ \ (1\, 2)^{s\left(u \circ v\right)} \circ v^{-1}}{\left((-1)^{b+s\left(u\right)}{a}\right)}  ,\, \ (1\, 2)^{\ s\left(u \circ v\right)} \right),
    \end{align*}
  for all $(a,u), (b,v) \in S \times T$, is a solution. Furthermore, one can check that the map $\rho$ is an anti-homomorphism from $\left(B, \cdot\right)$ into the monoid of the maps from $B$ into itself, hence, $r_B$ is a solution on $B$ and, by \cite[Proposition 2.14]{JeAr19}, $\left(B,+, \cdot\right)$ is a completely simple left semi-brace.
\end{ex}

\bigskip

Starting from the semidirect product of the inverse semigroups $S$ and $T$ via $\sigma$ in \cref{ex:semidirect2}, we show how to obtain five examples of double semidirect products by choosing all the possible maps $\delta$. 
\begin{ex}
 Let $S$ and $T$ be the inverse semigroups in \cref{ex:semidirect2}, set $a+b=b\cdot 1=1$, for all $a,b \in S$, and $u+v=uv$, for all $u, v \in T$, then $S$ and $T$ are the left inverse semi-braces in \cref{ex:prod-idempot} and \cref{ex:first-examples}, respectively. Recall that the solution $r_S$ associated to $S$ is given by $r_S(a,b)=(1,1)$ (see \cref{ex-soluzione-semibrace-be}) and, since $u^{-1}=u$, for every $u \in T$, the solution $r_T$ associated to $T$ is given by $r_T(u,v)=(u^2v, uv^2)$ (see \cref{ex-sol-semibrace-clifford}).
Now, recall that if $\tau$ is the automorphism of $S$ given by $\tau := (x\, y)$, then the map $\sigma:T\to \Aut(S)$ given by $\sigma(1) = \sigma(x) = \id_S$, and $\sigma(y) = \tau$
is a homomorphism from $(T, \cdot)$ into the automorphism group of the left inverse semi-brace $S$.
One can check that there exist five endomorphisms $\varphi$ of $(T,+)$ and, in particular, they are all idempotent. Moreover, note that to obtain $\delta$ an anti-homomorphism we have to choose 
$\delta(a)=\varphi$, for every $a \in S$, with $\varphi \in \End(T,+)$. 
In each of these cases, it is easy to prove that condition \eqref{eq:semibrace-sigma-delta} holds. Thus, for every fixed $\delta$, we have that $B:=S \times T$ is the double semidirect product of $S$ and $T$ via $\sigma$ and $\delta$ with respect to
\begin{align*}
    (a,u)+(b,v)=(1, \varphi(u)v) \qquad (a,u)(b,v)=\left(a ^u b, \, uv\right),
\end{align*}
for all $(a,u), (b,v) \in S     \times T$. Now, distinguishing the various cases, we analyze that the maps $r_B$ associated to each semi-brace $B$.
\begin{itemize}
     \item[-] If $\varphi = \id_T$:
     \begin{align*}
         r_B\left(\left(a,u\right),\left(b,v\right)\right)
         = \left(\left(1, u^{2}v\right), \ 
         \left(1, uv^{2}\right)\right),
     \end{align*}
     which clearly is a solution, since it is the semidirect product of $r_S$ and $r_T$. In particular, by \cite[Proposition 10]{CaCoSt19}, it is a cubic solution.
    \item[-] If $\varphi = k_1$, the constant map from $T$ into itself of value $1$:
    \begin{align*}
         r_B\left(\left(a,u\right),\left(b,v\right)\right)
         = \left(\left(1, uv\right), \ 
         \left(1, v^{2}\right)\right),
     \end{align*}
     which is an idempotent solution.
     \item[-] If $\varphi = k_x$, the constant map from $T$ into itself of value $x$:
     \begin{align*}
         r_B\left(\left(a,u\right),\left(b,v\right)\right)
         = \left(\left(1, xuv\right), \ 
         \left(1, 
         x v^{2}\right)\right)
     \end{align*}
     which is an idempotent solution.
     \item[-] If $\varphi$ is the map from $T$ into itself defined by $\varphi\left(1\right) = 1$ and $\varphi\left(x\right) = \varphi\left(y\right) = x$:
     \begin{align*}
         r_B\left(\left(a,u\right),\left(b,v\right)\right)
         = \left(\left(1, \varphi\left(u\right)u v\right), \ 
         \left(1, \varphi\left(u\right)v^{2}\right)\right)
     \end{align*}
     is a solution and it satisfies $r_B^5=r_B^3$.
     \item[-] If $\varphi$ the map from $T$ into itself defined by $\varphi\left(1\right) = \varphi\left(x\right) = x$ and $\varphi\left(y\right) = y$:
     \begin{align*}
         r_B\left(\left(a,u\right),\left(b,v\right)\right)
         = \left(\left(1, \varphi\left(u\right)u v\right), \ 
         \left(1, \varphi\left(u\right)v^{2}\right)\right)
     \end{align*}
     is not a solution. Indeed, if $a,b,c$ are arbitrary elements of $S$, $u = w = 1$, and $v = y$, one can check that the braid relation is not satisfied.
 \end{itemize}
\end{ex}
\medskip

\noindent In light of the previous example, it arises the following question.
\begin{que}
Let $S,T$ be left inverse semi-braces having solutions $r_S$ and $r_T$ and $B$ the double semidirect product of $S$ and $T$ via $\sigma$ and $\delta$. 
Under which assumptions the map $r_{B}$ is a solution?
\end{que}

\bigskip

\section{The asymmetric product of left inverse semi-braces}
This section aims to introduce a
generalization of the asymmetric product of left cancellative left semi-braces given in \cite{CaCoSt17}, involving left inverse semi-braces.
We highlight that, in general, this construction does not include the double semidirect product. Moreover, we provide sufficient conditions to obtain solutions.

\medskip

To present the asymmetric product of left inverse semi-braces, we need the notion of cocycle on semigroups. In particular, the following definition is inspired to that used for groups in the context of Schreier’s extension (see in \cite[Theorem 15.1.1]{Ha59}). 

\begin{defin}\label{def:delta-cocycle}
    Let $(S,+)$ and $(T,+)$ be two semigroups (not necessarily commutative) and $\delta:S\to \End\left(T\right)$ a map  
    from $S$ into the endomorphism semigroup of $T$. Set $u^{a}:= \delta\left(a\right)\left(u\right) $, for all $a\in S$ and $u\in T$, a map $\mathfrak{b}:S\times S\to T$ is called a \emph{$\delta$-cocycle} if
    \begin{align}
       \label{eq: delta-cociclo} \mathfrak{b}\left(a + b,\, c\right) + \mathfrak{b}\left(a, b\right)^c + \left(u^{b}\right)^{c} +v^c
        = \mathfrak{b}\left(a, b + c\right) + u^{b+c} + \mathfrak{b}\left(b,c\right)+v^c
    \end{align}
    holds, for all $a,b,c \in S$ and $u,v \in T$.
\end{defin}

\medskip

The notion of cocycle was already recovered in \cite[p. 173]{CaCoSt17} 
for left cancellative left semi-braces.  Specifically, the concept of $\delta$-cocycle in \cref{def:delta-cocycle} involves entirely the additive structures of $S$ and $T$, hence it is not a simple readjustment of that introduced in \cite{CaCoSt17}. Let us compare into detail the two definitions.

\begin{rem}\label{rem-cocicli}
Let $\left(S,+\right)$, $\left(T, +\right)$ be right groups, $S:= H + E$ and $T:= N + F$ where $H\cap E = \{1_S\}$, $N\cap F = \{1_T\}$, $E$ and $F$ are the sets of idempotents of $S$ and $T$, respectively, and $H = S + 1$ and $N = T + 1$.
Let $\alpha: H\to \Aut\left(N\right)$ and $\mathfrak{c}:H\times H\to N$ be maps such that $\left(\alpha, \mathfrak{c}\right)$ is a cocycle as in \cite[p. 11]{CaCoSt17}. 
Define the map $\mathfrak{b}:S\times S\to T$ given by
\begin{align*}
    \mathfrak{b}(h_1 + e_1, h_2 + e_2):= \mathfrak{c}(h_1, h_2),
\end{align*}
for every $(h_1 + e_1, h_2 + e_2)\in S\times S$, with $h_1, h_2\in H$ and $e_1 , e_2\in E$, and $\delta: S\to \End\left(T\right)$ the map given by
\begin{align*}
\delta\left(h + e\right)\left(n + f\right)= \left(n + f\right)^{h + e}
:= n^{h} + f,
\end{align*}
for all $h+e \in S$ and $n+f\in T$. If $(a,u), (b,v) \in S \times T$, with $(a,u)=(h_1+e_1, n_1+f_1)$ and $(b,v)=(h_2+e_2, n_2+f_2)$, since $e_1$ and $f_1$ are idempotents, then we have that
\begin{align*}
\left(a,u\right)+\left(b,v\right)
&=\left(h_1+e_1+h_2+e_2,\, \mathfrak{b}\left(h_1+e_1, h_2+e_2\right)+\left(n_1+f_1\right)^{\left(h_2+f_2\right)}+n_2+f_2\right)\\
&=\left(h_1+h_2+e_2,\, \mathfrak{c}\left(h_1,h_2\right)+n_1^{h_2}+f_1+n_2+f_2\right)\\
&= \left(h_1+h_2+e_2,\,  \mathfrak{c}\,\left(h_1,h_2\right)+n_1^{h_2}+n_2+f_2\right),
\end{align*}
which is exactly the sum in \cite[Theorem 12]{CaCoSt17}. Now, recalling conditions $1.$ and $2.$ in \cite[p. 173]{CaCoSt17}, namely,
\begin{align*}
    &\left(n^{h_1}\right)^{h_2}
    = -\mathfrak{c}\left(h_1,h_2\right)
    + n^{h_1 + h_2} + \mathfrak{c}\left(h_1,h_2\right)\\
    &\mathfrak{c}\left(h_1+h_2, h_3\right)
    + \mathfrak{c}\left(h_1, h_2\right)^{h_3}
     =\mathfrak{c}\left(h_1, h_2 + h_3\right) + \mathfrak{c}\left(h_2,h_3\right),
\end{align*}
for all $h_1,h_2,h_3\in H$ and $n\in N$, 
if $a= h_1 + e_1$, $b= h_2 + e_2$, $c = h_3 + e_3$ are elements of $S$, and $u = n_1 + f_1$, $v = n_2 + f_2$ are elements of $T$, 
it follows that  
\begin{align*}
        &\mathfrak{b}\left(a + b,\, c\right) + \mathfrak{b}\left(a, b\right)^c + \left(u^{b}\right)^{c} + v^c\\
        &=\mathfrak{c}\left(h_1 + h_2, h_3\right)
        + \mathfrak{c}\left(h_1, h_2\right)^{h_3}
        + \left(n_1^{h_2}\right)^{h_3} + f_1 +v^c\\
        &= \mathfrak{c}\left(h_1 + h_2, h_3\right)
        + \mathfrak{c}\left(h_1, h_2\right)^{h_3}
        + \left(n^{h_2}\right)^{h_3} +v^c\\
        &=\mathfrak{c}\left(h_1, h_2+ h_3\right) + \mathfrak{c}\left(h_2, h_3\right)+ \left(n^{h_2}\right)^{h_3} +v^c\\
        &=\mathfrak{c}\left(h_1, h_2+h_3\right)+ n^{h_2+h_3}+f_1+\mathfrak{c}\left(h_2,h_3\right)+v^c  \\
        &= \mathfrak{b}\left(a, b + c\right) + u^{b+c} +\mathfrak{b}\left(b,c\right)+v^c,
    \end{align*}
i.e., $\mathfrak{b}$ is a $\delta$-cocycle.
\end{rem}

\medskip

In the following theorem we provide the construction of the asymmetric product of left inverse semi-braces.
\begin{theor}\label{th:asymmetric-inv-semi}
		Let $S$,$T$ be left inverse semi-braces,  $\sigma: T \to \Aut\left(S\right)$ a homomorphism from $\left(T,\cdot\right)$  into the automorphism group of the left inverse semi-brace $S$, with $^u a:= \sigma(u)(a)$, for all $a \in S$ and $u \in T$, and $\delta:S\to \End\left(T\right)$ a map from $S$ into the endomorphism semigroup of $\left(T, +\right)$. If  $\mathfrak{b}$ is a $\delta$-cocycle such that
	\begin{align}\label{eq:cocycle-semibrace}
\mathfrak{b}\left(a \, ^ub, \lambda_{a}\left({}^uc\right)\right)
		+ \left(uv\right)^{\lambda_{a}\left({}^uc\right)}
		+ u\left(\mathfrak{b}\left(^{u^{-1}}\left(a^{-1}\right),c\right)+\left(u^{-1}\right)^c\right)=u\left(\mathfrak{b}\left(b,c\right) + v^c\right)
	\end{align}
		holds, for all $a,b,c \in S$ and $u,v\in T$, then $S \times T$ with respect to
		\begin{align*}
		 \left(a,u\right)+\left(b,v\right)&:=\left(a+b, \, \mathfrak{b}\left(a, b \right)+u^b+v\right)\\
		 \left(a,u\right)\left(b,v\right)&:=\left(a \ ^{u}{b}, \, uv\right),
		\end{align*}
		for all $\left(a,u\right), \left(b,v\right) \in S \times T$,
		is a inverse semi-brace. We call such a left inverse semi-brace the \emph{asymmetric product} of $S$ and $T$ via $\sigma$, $\delta$, and $\mathfrak{b}$.
		\begin{proof}
		Initially, by applying \eqref{eq: delta-cociclo}, it is a routine computation to see that $(S \times T, +)$ is a semigroup. Moreover, by \cref{th:char-semiprod}, $(S \times T, \cdot)$ is an inverse semigroup. Now, to get the claim we have only to check that \eqref{key1} holds. If $(a,u), (b,v), (c, w) \in S \times T$, we obtain
			 \begin{align*}
	 \left(a,u\right)\left(\left(b,v\right)+\left(c,w\right)\right)=\left( a \ {}^{u}\left(b+c\right), u \left(\mathfrak{b}\left(b,c\right)+v^c+w\right)\right)
	\end{align*}
	and
		\begin{align*}
		    &\left(a,u\right)\left(b,v\right)
		    + \left(a,u\right)\left(\left(a,u\right)^{-1} + \left(c,w\right)\right)\\
		    &= \left(a \,{}^{u}b, \, uv\right)
		    + \left(a,u\right)
		    \left({}^{u^{-1}}a^{-1} + c, \,
		    \mathfrak{b}\left({}^{u^{-1}}a^{-1}, c\right) +\left(u^{-1}\right)^{c} + w\right)\\
		    &= \left(a \,{}^{u}b, \, uv\right)
		    + \left(a\ {}^{u}\left({}^{u^{-1}}a^{-1} + c\right), 
		    u\left(\mathfrak{b}\left({}^{u^{-1}}a^{-1}, c\right) +\left(u^{-1}\right)^{c} + w  \right)\right)\\
		    &= \left(a ^u b +a \left(^{uu^{-1}}{a^{-1}}+ ^uc\right), 
		    \, \mathfrak{b}\left(a {}^{u}b, a \left(^{uu^{-1}}{a^{-1}}+ ^uc\right)\right) + \left(uv\right)^{a \left(^{uu^{-1}}{a^{-1}}+ ^uc\right)}\right.\\
		    &\qquad\left.+\, u\left(\mathfrak{b}\left(^{u^{-1}}\left(a^{-1}\right),c\right)+\left(u^{-1}\right)^c+w\right)\right)\\
		    &= \left( a ^u b +\lambda_a \left(^uc\right), \mathfrak{b}\left(a {}^{u}b, \lambda_a \left(^uc\right)\right) + \left(uv\right)^{\lambda_a \left(^uc\right)}+ \,u\, \left(\mathfrak{b}\left(^{u^{-1}}\left(a^{-1}\right),c\right)+\left(u^{-1}\right)^c+w\right)\right),
	 \end{align*}
	 where in the last equality we apply \eqref{aut-idemp}. Now, we observe that
	\begin{align*}
	    a \, {}^{u} b +\lambda_a \left({}^{u}c\right)
	    = a\left({}^{u} b+ {}^{u} c\right)
	    = a\, {}^{u}\left(b + c\right)
	\end{align*}
	and
	\begin{align*}
	\mathfrak{b}&\left(a \,{}^{u}b, \lambda_a \left(^uc\right)\right) + \left(uv\right)^{\lambda_a \left(^uc\right)}+ \,u\, \left(\mathfrak{b}\left(^{u^{-1}}\left(a^{-1}\right),c\right)+\left(u^{-1}\right)^c+w\right)\\
	&=  \mathfrak{b}\left(a \,{}^{u}b, \lambda_a \left(^uc\right)\right) + \left(uv\right)^{\lambda_a \left(^uc\right)}+ \,u\, \left(\mathfrak{b}\left(^{u^{-1}}\left(a^{-1}\right),c\right)+\left(u^{-1}\right)^c\right) +u\left(u^{-1}+w\right) \\
	&=u \left(\mathfrak{b}\left(b,c\right)+v^c\right)+u\left(u^{-1}+w\right) &\mbox{by \eqref{eq:cocycle-semibrace}}\\
	&=u \left(\mathfrak{b}\left(b,c\right)+v^c+w\right).
	\end{align*}
Therefore, $S \times T$	is a left inverse semi-brace.
		\end{proof}
\end{theor}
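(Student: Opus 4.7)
The plan is to verify the three defining properties of a left inverse semi-brace on $S\times T$ in sequence. First, I would show that $(S\times T, +)$ is a semigroup. Expanding both sides of $((a,u)+(b,v))+(c,w)=(a,u)+((b,v)+(c,w))$ and distributing $\delta(c)\in\End(T,+)$ over the inner sum, the first components agree by associativity in $(S,+)$, while the second components reduce, after cancellation, precisely to the $\delta$-cocycle identity \eqref{eq: delta-cociclo}. Thus the cocycle condition is exactly what makes the $T$-component associative.

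Second, since $\sigma$ is a homomorphism from $(T,\cdot)$ into $\Aut(S)$, the multiplication on $S\times T$ is the semidirect product of the inverse semigroups $(S,\cdot)$ and $(T,\cdot)$ via $\sigma$, so \cref{th:char-semiprod} immediately yields that $(S\times T,\cdot)$ is an inverse semigroup, with the inverse of $(a,u)$ given by \eqref{inverso}.

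Third, and this is where the bulk of the work lies, I would verify \eqref{key1}. The left-hand side $(a,u)\bigl((b,v)+(c,w)\bigr)$ expands to $\bigl(a\,{}^u(b+c),\, u(\mathfrak{b}(b,c)+v^c+w)\bigr)$. For the right-hand side, I would first use \eqref{inverso} to compute $(a,u)^{-1}+(c,w)=\bigl({}^{u^{-1}}a^{-1}+c,\, \mathfrak{b}({}^{u^{-1}}a^{-1},c)+(u^{-1})^c+w\bigr)$, multiply by $(a,u)$ on the left, and then add $(a,u)(b,v)$. Using that $\sigma(u)$ is an additive automorphism of $S$, \eqref{aut-idemp} to collapse ${}^{uu^{-1}}a^{-1}=a^{-1}$, and \eqref{key1} in $S$ to rewrite $a({}^{uu^{-1}}a^{-1}+{}^uc)=\lambda_a({}^uc)$, the first component becomes $a\,{}^ub+\lambda_a({}^uc)=a\,{}^u(b+c)$, matching the left. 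The $T$-component of the right-hand side simplifies to
$$
\mathfrak{b}\bigl(a\,{}^ub,\,\lambda_a({}^uc)\bigr)+(uv)^{\lambda_a({}^uc)}+u\bigl(\mathfrak{b}({}^{u^{-1}}a^{-1},c)+(u^{-1})^c+w\bigr),
$$
and applying \eqref{key1} in $T$ to rewrite $u(Z+w)=uZ+u(u^{-1}+w)$ on both sides strips off the common summand $u(u^{-1}+w)$; the remaining equality is precisely the hypothesis \eqref{eq:cocycle-semibrace}.

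The main obstacle is the bookkeeping in the third step: two distinct actions $\,{}^u a=\sigma(u)(a)$ and $u^a=\delta(a)(u)$ must be tracked, and the reduction requires repeated use of \eqref{key1} in both $S$ and $T$, the endomorphism property of $\delta(a)$, the automorphism property of $\sigma(u)$, and \eqref{aut-idemp} to absorb idempotents of the form $uu^{-1}$ appearing in the exponents. Once the two sides are brought into a shape where \eqref{eq:cocycle-semibrace} directly applies, the proof concludes.
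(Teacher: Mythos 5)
Your proposal is correct and follows the paper's proof essentially step for step: the $\delta$-cocycle identity \eqref{eq: delta-cociclo} for associativity of $(S\times T,+)$, \cref{th:char-semiprod} for the multiplicative inverse semigroup with inverses given by \eqref{inverso}, and the verification of \eqref{key1} via the same expansion, using \eqref{aut-idemp} to absorb ${}^{uu^{-1}}$, \eqref{key1} in $S$ for the first component, and \eqref{key1} in $T$ to split off the common summand $u\left(u^{-1}+w\right)$ so that the remaining identity is exactly the hypothesis \eqref{eq:cocycle-semibrace}. No substantive difference from the paper's argument.
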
		

\medskip

Let us compare \cite[Theorem 12]{CaCoSt17} with \cref{th:asymmetric-inv-semi} in the specific case of two left cancellative left semi-braces.
\begin{rem}
Let $\left(S,+\right)$, $\left(T, +\right)$ be right groups, $S:= H + E$ and $T:= N + F$ where $H\cap E = \{1\}$, $N\cap F = \{1\}$, $E$ and $F$ are the sets of idempotents of $S$ and $T$, respectively, and $H = S + 1$ and $N = T + 1$. Let us consider the maps $\mathfrak{b}$ and $\delta$ defined in \cref{rem-cocicli} and $a= h_1 + e_1$, $b= h_2 + e_2$, $c = h_3 + e_3 \in S$, and  $u = n_1 + f_1$, $v = n_2 + f_2 \in T$. Recalling that in any left cancellative left semi-brace it holds $x \cdot y=x +\lambda_x(y)$, by \cite[Proposition 7-1.]{CaCoSt17}, we obtain that
\begin{align*}
    a \, ^ub
    &= h_1 + e_1 +\lambda_{h_1 + e_1} \left(^{\left(n_1+f_1\right)}\left(h_2 + e_2\right)\right) 
    = h_1 + e_1 + \lambda_{h_1 + e_1} \left(^{\left(n_1+f_1\right)}\left(h_2\right)\right)
    + \lambda_{h_1 + e_1} \left(^{\left(n_1+f_1\right)}\left(e_2\right)\right)
    \\
    &= \underbrace{\left(h_1 + e_1\right)\left(^{\left(n_1+f_1\right)}\left(h_2\right)\right) + 1}_{H} + \underbrace{\lambda_{h_1+e_1}\left(^{\left(n_1+f_1\right)}e_2\right)}_{E}
\end{align*}
and
\begin{align*}
    \lambda_{a}\left({}^uc\right)
    = \underbrace{\lambda_{h_1 + e_1}\left(^{\left(n_1+f_1\right)}h_3 \right) +1}_{H} +\underbrace{ \lambda_{h_1 + e_1}\left(e_3\right)}_{E}
\end{align*}
and, by \cite[Lemma 11-1.]{CaCoSt17},
\begin{align*}
    a^{-1}
    = \left(h_1 + e_1\right)^{-1} e_1 + \left(h_1 + e_1\right)^{-1} h_1
    = \underbrace{\rho_{e_1}\left(h_1^{-1}\right)}_{H} + \underbrace{\left(\lambda_{h_1^{-1}}\left(e_1\right)\right)^{-1}}_{E}.
\end{align*}
Analogously, 
$u^{-1} = \underbrace{\rho_{f_1}\left(n_1^{-1}\right)}_{N} + \underbrace{\left(\lambda_{n_1^{-1}}\left(f_1\right)\right)^{-1}}_{F}$.
Moreover, $uv
    = \underbrace{\left(n_1 + f_1\right) n_2 + 1}_{N}+ \underbrace{\lambda_{n_1 + f_1}\left(f_2\right)}_{F}$, hence we get
\begin{align*}
    \left(u v\right)^{\lambda_a\left(^{u}{c}\right)}
    = \underbrace{\left(\left(n_1 + f_1\right) n_2 + 1\right)^{\lambda_{h_1+e_1}\left(^{\left(n_1+f_1\right)}h_3+1\right)}}_{N} + \underbrace{\lambda_{n_1 + f_1}\left(f_2\right)}_{F}.
\end{align*}
Since $\lambda_{n_1 + f_1}\left(f_2\right)\in F$, by condition $(5)$ of \cite[Theorem 12]{CaCoSt17} and by recalling that
$
    \mathfrak{b}(h_1 + e_1, h_2 + e_2)= \mathfrak{c}(h_1, h_2),
$
for every $(h_1 + e_1, h_2 + e_2)\in S\times S$,
it follows that
\begin{align*}
     &\mathfrak{b}\left(a \, ^ub, \lambda_{a}\left({}^uc\right)\right)
		+ \left(uv\right)^{\lambda_{a}\left({}^uc\right)}
		+ u\left(\mathfrak{b}\left(^{u^{-1}}\left(a^{-1}\right),c\right)+\left(u^{-1}\right)^c\right)\\
		&= \mathfrak{c}\left(\left(h_1 + e_1\right)\,^{n_1+f_1}h_2,\, \lambda_{h_1 + e_1}\left(^{\left(n_1+f_1\right)}h_3 + 1 \right) +1\right)\\
		&\quad+ \left(\left(n_1 + f_1\right) n_2 + 1\right)^{\lambda_{h_1+e_1}\left(^{\left(n_1+f_1\right)}h_3 + 1\right)} + \lambda_{n_1 + f_1}\left(f_2\right)\\
		&\quad+ \left(n_1 + f_1\right)\left(
		\mathfrak{c}\left(^{u^{-1}}\rho_{e_1}\left(h_1^{-1}\right),h_3\right)
		+ \rho_{f_1}\left(n_1^{-1}\right)^{h_3}\right)\\
		&=  \mathfrak{c}\left(\left(h_1 + e_1\right)\, ^{n_1+f_1}h_2  + 1, \,\lambda_{h_1 + e_1}\left(^{n_1+f_1}h_3\right) +1\right)\\
		&\quad+ \left(\left(n_1 + f_1\right) n_2 + 1\right)^{\lambda_{h_1+e_1}\left(^{\left(n_1+f_1\right)}h_3+1\right)}\\
		&\quad+ \left(n_1 + f_1\right)\left(
		\mathfrak{c}\left(^{\left(n_1+f_1\right)^{-1}}\rho_{e_1}\left(h_1^{-1}\right),h_3\right)
		+ \rho_{f_1}\left(n_1^{-1}\right)^{h_3}\right)\\
		&= \left(n_1 + f_1\right) \left(\mathfrak{c}\left(h_2,h_3\right) + n_2^{h_3}\right)\\
		& =
       u\left(\mathfrak{b}\left(b,c\right) + v^c\right),
   \end{align*}
   namely, condition \eqref{eq:cocycle-semibrace} is satisfied.
\end{rem}

\medskip

In the following theorem we give the map associated to any asymmetric product of left inverse semi-braces. We omit the proof since it is similar to that in \cref{r_weakproduct}. We recall the notation already adopted, that is $\Omega_{u,v}^{a}:= \left(u^{-1}\right)^a+v$,
for all $a\in S$ and $u,v\in T$. 

\begin{prop}
    Let $S$, $T$ be left inverse semi-braces and $B$ the asymmetric product of $S$ and $T$ via $\sigma$ and $\delta$ and $\mathfrak{b}$. Then, the map $r_B$ associated to $B$ is given by
    \begin{align*}
        r_B\left(\left(a,u\right), \left(b,v\right)\right)
        =
        &\left(\left(\lambda_a\left(^{u}{b}\right), u\left(\mathfrak{b}\left(^{u^{-1}}a^{-1},b\right) +\Omega_{u,v}^{b}\right)\right),\right.\\
        &\left.
        \left(^{\mathfrak{b}\left(^{u^{-1}}a^{-1},b\right) + \left(\Omega_{u,v}^{b}\right)^{-1}u^{-1}}{\rho_{^u b}\left(a\right)}, \,  \left(\mathfrak{b}\left(^{u^{-1}}a^{-1},b\right)+ \Omega_{u,v}^{b}\right)^{-1}v\right)\right),
    \end{align*}
    for all $\left(a,u\right), \left(b,v\right)\in S\times T$.
\end{prop}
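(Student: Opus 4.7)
The plan is to compute $\lambda_{(a,u)}(b,v)$ and $\rho_{(b,v)}(a,u)$ directly from their definitions, following the pattern of \cref{r_weakproduct}. Since the multiplicative semigroup of $B$ is exactly the semidirect product of $(S,\cdot)$ and $(T,\cdot)$ via $\sigma$, formula \eqref{inverso} applies verbatim, so $(a,u)^{-1} = ({}^{u^{-1}}a^{-1},\, u^{-1})$. Combining this with the addition rule of the asymmetric product gives
$$(a,u)^{-1} + (b,v) \;=\; \bigl({}^{u^{-1}}a^{-1} + b,\ \mathfrak{b}({}^{u^{-1}}a^{-1}, b) + \Omega_{u,v}^{b}\bigr),$$
and I will denote this pair by $(X_{1}, X_{2})$.

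For $\lambda_{(a,u)}(b,v) = (a,u)\,(X_{1}, X_{2})$, the first coordinate reduces to $a\,{}^{u}({}^{u^{-1}}a^{-1} + b) = a({}^{uu^{-1}}a^{-1} + {}^{u}b) = a(a^{-1} + {}^{u}b) = \lambda_{a}({}^{u}b)$, using that $\sigma$ is a homomorphism and \eqref{aut-idemp} applied to $uu^{-1}\in \E(T)$; the second coordinate is $uX_{2}$, already in the stated form. For $\rho_{(b,v)}(a,u) = (X_{1}, X_{2})^{-1}(b,v)$, I will apply \eqref{inverso} once more to write $(X_{1}, X_{2})^{-1} = ({}^{X_{2}^{-1}}X_{1}^{-1},\, X_{2}^{-1})$ and then multiply by $(b,v)$ to obtain $\bigl({}^{X_{2}^{-1}}(X_{1}^{-1}b),\, X_{2}^{-1}v\bigr)$. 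The factor $X_{1}^{-1}b = ({}^{u^{-1}}a^{-1} + b)^{-1}b$ equals $\rho_{b}({}^{u^{-1}}a)$ by the very definition of $\rho$; \cref{lem:azioni}, combined with \eqref{aut-idemp} applied to $u^{-1}u$, then yields $\rho_{b}({}^{u^{-1}}a) = {}^{u^{-1}}\rho_{{}^{u}b}(a)$. Using once more that $\sigma$ is a homomorphism, the first coordinate becomes ${}^{X_{2}^{-1}u^{-1}}\rho_{{}^{u}b}(a)$, matching the statement.

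The main obstacle is purely bookkeeping: the cocycle $\mathfrak{b}$ enters only as an additive summand in $T$ and plays no deeper role here, in contrast to \cref{th:asymmetric-inv-semi} where \eqref{eq:cocycle-semibrace} had to be invoked. The actual care points are (i) applying \eqref{aut-idemp} in two places to suppress the idempotents $uu^{-1}$ and $u^{-1}u$ that arise when $\sigma$ is commuted through products of $u$ with $u^{-1}$, and (ii) invoking \cref{lem:azioni} in the correct direction so as to bring $\rho$ into the form $\rho_{{}^{u}b}(a)$ prescribed by the formula, rather than the form $\rho_{b}({}^{u^{-1}}a)$ that emerges naïvely from the direct computation.
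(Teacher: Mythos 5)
Your proof is correct and takes essentially the same route the paper intends: the paper omits the proof of this proposition, declaring it analogous to that of \cref{r_weakproduct}, and your computation is precisely that adaptation, invoking \eqref{inverso}, \eqref{aut-idemp}, and \cref{lem:azioni} at exactly the points where the proof of \cref{r_weakproduct} does, with the cocycle $\mathfrak{b}$ carried along as an inert additive summand. Note only that your derivation yields the first superscript in the grouped form $\bigl(\mathfrak{b}({}^{u^{-1}}a^{-1},b)+\Omega_{u,v}^{b}\bigr)^{-1}u^{-1}$, consistent with the second component, which indicates that the ungrouped superscript in the printed statement is a typographical slip rather than a discrepancy in your argument.
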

\medskip

\noindent Note that the second component of $\rho_{(b,v)}(a,u)$ can be written also as
\begin{align*}
\left(\mathfrak{b}\left(^{u^{-1}}a^{-1},b\right)+ \Omega_{u,v}^{b}\right)^{-1}v 
&= \rho_v\left(\left(\mathfrak{b}\left(^{u^{-1}}a^{-1},b\right)+ \left(u^{-1}\right)^b\right)^{-1}\right),
\end{align*}
for all $(a,u), (b,v) \in S \times T$. 
\medskip

Now, we focus on sufficient conditions that allow for constructing new solutions through the asymmetric product of left semi-braces.

\begin{theor}\label{th:asym-groups}
    Let $S$, $T$ be left semi-braces and 
    $B$ the asymmetric product of $S$ and $T$ via $\sigma$, $\delta$, and $\mathfrak{b}$. If $r_S$ and $r_T$ are solutions associated to $S$ and $T$, respectively, and the following are satisfied  
      \begin{enumerate}
          \item $\left(u^{1}\right)^a = u^a$,
          \item $\mathfrak{b}\left(1,a\right)+u=1 + u$,
          \item $\mathfrak{b}\left(a,1+b\right)=\mathfrak{b}\left(a,b\right)$,
      \end{enumerate}
      for all $a,b\in S$ and $u\in T$, then the map $r_B$ associated to $B$ is a solution.
      \begin{proof}
      To prove that the map $r_B$ is a solution, we check that the condition \eqref{eq:condsolution}  in \cref{th-generalized-sol} is satisfied.\\ 
      If $(a,u), (b, v), (c,w) \in S \times T$, by $2.$, we get
      \begin{align*}
     &\left(a,u\right)+\left(b,v\right)\left(\left(1,1\right)+\left(c,w\right)\right)\\
        &=\left(a+b\left(1+^v c\right),\, \mathfrak{b}\left(a, \, b\left(1+^v c\right) \right)+u^{b\left(1+\, ^vc\right)}+v\left(\mathfrak{b}\left(1, c \right)+1^c+w\right)\right)\\
        &=\left(a+b\left(1+^v c\right),\, \mathfrak{b}\left(a, \, b\left(1+^v c\right) \right)+u^{b\left(1+\, ^vc\right)} + v\left(1 + w\right)\right),
      \end{align*}
      since $1^c$ is idempotent and by \cref{prop-middleunit-clifford-preston} it is a middle unit. Moreover,    
\begin{align*}
    &\left(a,u\right)
           +\lambda_{\left(b,v\right)}\left(c,w\right)
           \left(\left(1,1\right) + \rho_{\left(c,w\right)}\left(b,v\right)\right)\\
           &= \left(a + \lambda_b\left(^v c\right)\left(1 + \rho_{{}^v c}\left(b\right)\right),\, \mathfrak{b}\left(a,\lambda_b\left(^v c\right)\left(1 + \rho_{{}^v c}\left(b\right)\right) \right)+u^{\lambda_b\left(^v c\right)\left(1 + \rho_{{}^v c}\left(b\right)\right)} \right.\\
         &\left.\qquad + v\left(\mathfrak{b}\left( ^ {v^{-1}} {b^{-1}},\,c\right)+\Omega_{v,w}^{c}\right)\left(\mathfrak{b}\left(1,\, ^{\left(\mathfrak{b}\left( ^ {v^{-1}} {b^{-1}},\,c\right)+\Omega_{v,w}^{c}\right)^{-1}v^{-1}}{\rho_{^{v}{c}}{\left(b\right)}}\right)\right.\right.\\
         &\left.\left. \qquad
         +1^{^{\left(\mathfrak{b}\left( ^ {v^{-1}} {b^{-1}},\,c\right)+\Omega_{v,w}^{c}\right)^{-1}v^{-1}}{\rho_{^{v}{c}}{\left(b\right)}}}
+ \left(\mathfrak{b}\left( ^ {v^{-1}} {b^{-1}},\,c\right)+\Omega_{v,w}^{c}\right)^{-1}w\right) \right).
\end{align*}
Note that the first components are equal since $r_S$ is a solution. In addition,
   \begin{align*}
       \mathfrak{b}\left(a,\lambda_b\left(^v c\right)\left(1 + \rho_{{}^v c}\left(b\right)\right) \right)&=\mathfrak{b}\left(a,1+\lambda_b\left(^v c\right)\left(1 + \rho_{{}^v c}\left(b\right)\right) \right) &\mbox{by $3.$}\\
       &=\mathfrak{b}\left(a, \, b\left(1+^v c\right)\right) &\mbox{by \eqref{eq:condsolution}}
   \end{align*}
   Furthermore, since $1^{^{\left(\mathfrak{b}\left( ^ {v^{-1}} {b^{-1}},\,c\right)+\Omega_{v,w}^{c}\right)^{-1}v^{-1}}{\rho_{^{v}{c}}{\left(b\right)}}}$ is an idempotent it is a middle unit and so, by $2.$, we get
   \begin{align*}
      &\mathfrak{b}\left(1,\, ^{\left(\mathfrak{b}\left( ^ {v^{-1}} {b^{-1}},\,c\right)+\Omega_{v,w}^{c}\right)^{-1}v^{-1}}{\rho_{^{v}{c}}{\left(b\right)}}\right)
         +\left(\mathfrak{b}\left( ^ {v^{-1}} {b^{-1}},\,c\right)+\Omega_{v,w}^{c}\right)^{-1}w\\
         &=1+\left(\mathfrak{b}\left( ^ {v^{-1}} {b^{-1}},\,c\right)+\Omega_{v,w}^{c}\right)^{-1}w
   \end{align*}
   Moreover, by $1.$ and \eqref{eq:condsolution}, it follows that $u^{\lambda_b\left(^v c\right)\left(1 + \rho_{{}^v c}\left(b\right)\right)}=u^{b\left(1+\, ^vc\right)}$, hence,
   \begin{align*}
       &u^{\lambda_b\left(^v c\right)\left(1 + \rho_{{}^v c}\left(b\right)\right)}+v\left(\mathfrak{b}\left( ^ {v^{-1}} {b^{-1}},\,c\right)+\Omega_{v,w}^{c}\right)\left(1+\left(\mathfrak{b}\left( ^ {v^{-1}} {b^{-1}},\,c\right)+\Omega_{v,w}^{c}\right)^{-1}w\right)\\
       &=u^{b\left(1+\, ^vc\right)}+v\left(\mathfrak{b}\left( ^ {v^{-1}} {b^{-1}},\,c\right)+\left(v^{-1}\right)^{c}\right)\cdot\\
       &\qquad\cdot\lambda_{\left(\mathfrak{b}\left( ^ {v^{-1}} {b^{-1}},\,c\right)+\left(v^{-1}\right)^{c}\right)^{-1}}\left(w\right)\left(1+\rho_w\left(\left(\mathfrak{b}\left( ^ {v^{-1}} {b^{-1}},\,c\right)+\left(v^{-1}\right)^{c}\right)^{-1}\right)\right)\\
       &=u^{b\left(1+\, ^vc\right)}+v\left(\mathfrak{b}\left( ^ {v^{-1}} {b^{-1}},\,c\right)+\left(v^{-1}\right)^{c}\right)\left(\mathfrak{b}\left( ^ {v^{-1}} {b^{-1}},\,c\right)+\left(v^{-1}\right)^{c}\right)^{-1}\left(1+w\right) &\mbox{by \eqref{eq:condsolution}}\\
       &=u^{b\left(1+\, ^vc\right)}+v\left(1+w\right).
   \end{align*}
      Thus, the second components are equal. Therefore, the claim follows.
      \end{proof}
\end{theor}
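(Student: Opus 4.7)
The plan is to reduce the problem to checking the single scalar identity \eqref{eq:condsolution} for the left semi-brace $B$ by invoking \cref{th-generalized-sol}. That is, it suffices to prove
\[
(a,u) + \lambda_{(b,v)}(c,w)\bigl((1,1) + \rho_{(c,w)}(b,v)\bigr) = (a,u) + (b,v)\bigl((1,1) + (c,w)\bigr),
\]
for all $(a,u),(b,v),(c,w)\in S\times T$, and I would expand both sides using the addition and multiplication in the asymmetric product, together with the explicit formulas for $\lambda_{(b,v)}(c,w)$ and $\rho_{(c,w)}(b,v)$ derived in the preceding proposition. Since the asymmetric product is defined componentwise in $S$ (the first coordinate is just the multiplication and addition of $S$ twisted by $\sigma$), I expect the $S$-component of the identity to reduce immediately to condition \eqref{eq:condsolution} for $S$, which holds because $r_S$ is a solution.

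The work then concentrates entirely on the $T$-component, where the complication is that the addition on $T$ has been twisted by $\delta$ and the cocycle $\mathfrak{b}$. First I would rewrite the right-hand $T$-component using that $1^c\in \E(T,+)$ is a middle unit (by \cref{prop-middleunit-clifford-preston}) together with condition $2.$ so that $\mathfrak{b}(1,c)$ disappears; this turns $v(\mathfrak{b}(1,c)+1^c+w)$ into $v(1+w)$. For the left-hand $T$-component I would use condition $3.$ to absorb the leading unit inside the first argument of $\mathfrak{b}$, i.e., $\mathfrak{b}(a,\lambda_b({}^{v}c)(1+\rho_{{}^{v}c}(b)))=\mathfrak{b}(a,b(1+{}^{v}c))$ after applying \eqref{eq:condsolution} for $S$ inside the second slot of $\mathfrak{b}$.

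Next, condition $1.$ combined with \eqref{eq:condsolution} for $S$ gives the identity of exponents $u^{\lambda_b({}^{v}c)(1+\rho_{{}^{v}c}(b))}=u^{b(1+{}^{v}c)}$, exactly as in the remark following \cref{th:weak-asym-groups}. The remaining middle summand inside $v(\cdots)$ on the left is a long $\mathfrak{b}$-plus-action expression; here I would again use condition $2.$ (to delete the $\mathfrak{b}(1,\ldots)$ term produced by expanding $\lambda_{(\cdot)}$) and the middle-unit property of the idempotent $1^{(\cdots)}$, reducing everything to the shape $v\,\bigl(\mathfrak{b}({}^{v^{-1}}b^{-1},c)+\Omega_{v,w}^{c}\bigr)\bigl(1+(\mathfrak{b}({}^{v^{-1}}b^{-1},c)+\Omega_{v,w}^{c})^{-1}w\bigr)$. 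Applying \eqref{eq:condsolution} for $T$ (valid since $r_T$ is a solution) to this expression collapses it to $v(1+w)$, matching the right side.

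The main obstacle is clearly bookkeeping: the $T$-component contains several nested $\mathfrak{b}$-cocycle terms, $\delta$-action exponents, and inverses through $\Omega_{u,v}^{a}$, and it is not a priori obvious that conditions $1.$--$3.$ are exactly what is needed to make every such term either collapse (via $\mathfrak{b}(1,\cdot)$ absorption and middle-unit idempotents) or reduce via the solution condition for $r_T$. I expect the conceptually cleanest way to organize the argument is to process the two coordinates independently after the expansion, applying $r_S$'s solution identity to the $S$-coordinate, then using conditions $2.$ and $3.$ to shed all $\mathfrak{b}$-terms stemming from adding a left unit, and finally using condition $1.$ together with $r_T$'s solution identity to match the exponents and the remaining $T$-sum.
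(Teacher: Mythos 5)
Your proposal is correct and follows essentially the same route as the paper's proof: reduction via \cref{th-generalized-sol} to checking \eqref{eq:condsolution} in $B$, componentwise expansion with the $S$-coordinate handled by \eqref{eq:condsolution} for $r_S$, conditions $2.$ and $3.$ together with the middle-unit property of the idempotents $1^c$ and $1^{(\cdots)}$ absorbing all $\mathfrak{b}(1,\cdot)$ terms, condition $1.$ with \eqref{eq:condsolution} matching the $\delta$-exponents $u^{\lambda_b({}^vc)(1+\rho_{{}^vc}(b))}=u^{b(1+{}^vc)}$, and a final application of \eqref{eq:condsolution} for $r_T$ collapsing $v\bigl(\mathfrak{b}({}^{v^{-1}}b^{-1},c)+\Omega_{v,w}^{c}\bigr)\bigl(1+(\cdots)^{-1}w\bigr)$ to $v(1+w)$. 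Every step you outline corresponds to a step the authors actually carry out, so the only thing separating your sketch from the paper's proof is the explicit bookkeeping you already anticipated.
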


\medskip

\begin{rem}
    As observed in \cref{rem:suff-cond-canc}, if $S$ and $T$ are left cancellative left semi-braces, then conditions $1.$ and $3.$ of the previous theorem are satisfied. 
    In addition, note that if $a\in S$ and $u\in T$, since  $\left(1,1\right)$ is a left identity in $S\times T$, we have
    \begin{align*}
        \left(a, 1 + u\right) 
        = \left(a,u\right) = 
        \left(1,1\right) + \left(a,u\right)
        = \left(1 + a, \, \mathfrak{b}\left(1,a\right) + 1^a+ u\right)
        =\left(a,    \mathfrak{b}\left(1,a\right) + u\right),
    \end{align*}
    thus the condition $2.$ also holds. 
\end{rem}

\medskip

\begin{rem}
    Note that, similarly to \cref{rem:altracondizione}, assuming that conditions $2.$ and $3.$ in the \cref{th:asym-groups} are satisfied, instead of condition $1.$ we may assume the more general one
    \begin{align}\label{1.''}\tag{$1'$.}
        u^{ab}
        = \left(u^{a}\right)^{\lambda_a\left(b\right)},
    \end{align}
for all $a,b \in S$ and $u \in T$.
\end{rem}

\medskip

\begin{ex}
Let $X$ be a set and $\cdot$ a binary operation on $X$ such that $\left(X, \cdot \right)$ is an abelian group. Thus, setting $a+b:= bf\left(b^{-1}a\right)$, for all $a,b\in X$, with $f$ an idempotent endomorphism of 
$\left(X, \cdot \right)$, we have that $S:= \left(X, +, \cdot\right)$ is a left semi-brace, see \cite[Example 5, case 3]{CCoSt20x-2}.
Moreover, let $T := \left(X, +, \cdot\right)$ be the left brace with the sum given by $u + v = u v$, for all $u,v\in X$.\\
Let us consider the map $\delta: S\to \End\left(T\right)$ such that $u^a = 1$, for all $a\in S$ and $u\in T$ and $\mathfrak{b}:S\times S\to T$ the $\delta$-cocycle defined by $\mathfrak{b}\left(a,b\right) = a$, for all $a,b\in S$. Thus, if $\sigma: T\to \Aut\left(S\right)$ is the map given by $^{u}a = a$, for all $a\in S$ and $u\in T$, it follows that \eqref{eq:cocycle-semibrace} is trivially satisfied. Therefore, $B:=S \times T$ is the asymmetric product of $S$ and $T$ via $\sigma, \delta$, and $\mathfrak{b}$ endowed with
\begin{align*}
    \left(a,u\right)+\left(b,v\right)=\left(a+b,a+v\right) \quad \text{and} \quad \left(a,u\right)\left(b,v\right)= \left(ab,uv\right),
\end{align*}
for all $ \left(a,u\right),\left(b,v\right)\in S \times T$. Now, the solutions $r_S$ and $r_T$ are given by $r_S\left(a,b\right)=\left(abf\left(b^{-1}a^{-1}\right),f\left(ab\right)\right)$ and $r_T\left(u,v\right)=\left(v, v^{-1}uv\right)$, respectively. Since the conditions in \cref{th:asym-groups} trivially hold, then we obtain that the map
\begin{align*}
    r_B\left(\left(a,u\right),\left(b,v\right)\right)=\left( \left( abf\left(b^{-1}a^{-1}\right), ua^{-1}v\right),\left(f\left(ab\right),a\right)\right)
\end{align*}
is a solution. Furthermore, it holds $r_B^3=r_B^2$.
\end{ex}

\medskip

The following is a simple class of examples of asymmetric product of inverse semi-braces. 
\begin{ex}
Let $S$ be an arbitrary left inverse semi-brace and $T$ the left inverse semi-brace with $\left(T,+\right)$ the left zero semigroup and $(T,\cdot)$ an upper semilattice with join $1$. Moreover, let $\sigma:T\to \Aut\left(S\right)$ be the homomorphism from $\left(T, \cdot\right)$ into the automorphism group of the left inverse semi-brace $S$ given by  ${}^{u}a = a$, for all $a\in S$ and $u\in T$, and $\delta:S\to \End(T)$ an arbitrary map from $S$ into the endomorphism monoid of $\left(T, +\right)$, and $\mathfrak{b}: S\times S\to T$ the $\delta$-cocycle defined $\mathfrak{b}\left(a,b\right) = 1$, for all $a, b \in S$. Therefore, by \cref{th:asymmetric-inv-semi}, $S\times T$ is a left inverse semi-brace with
\begin{align*}
		 \left(a,u\right)+\left(b,v\right)
		 =\left(a + b, \, 1\right)
		 \qquad
		 \left(a,u\right)\left(b,v\right) =\left(ab, \, uv\right),
		\end{align*}
		for all $\left(a,u\right), \left(b,v\right) \in S \times T$. Moreover, the map $r_B$ is given by
\begin{align*}
    r_B\left(\left(a,u\right), \left(b,v\right)\right)
    = \left(\left(\lambda_a\left(b\right), 1\right),\,
    \left(\rho_b\left(a\right), 1\right)\right)
\end{align*}
which is trivially a solution.
\end{ex}

\medskip

Finally, sufficient conditions to obtain that the map $r_B$ associated to an asymmetric product $B$ of two left inverse semi-braces $S$ and $T$ are not yet known to the authors. Thus, it arises the following question.
\begin{que}
Let $S,T$ be left inverse semi-braces having solutions $r_S$ and $r_T$ and $B$ the asymmetric product of $S$ and $T$ via $\sigma, \delta$, and a $\delta$-cocycle $\mathfrak{b}$. Under which assumptions is the map $r_{B}$ a solution?
\end{que}

\bibliographystyle{elsart-num-sort}  
\bibliography{bibliography}

\end{document}